\documentclass[12pt]{amsart}
\usepackage{amsmath, amssymb, amsfonts}
\usepackage[all]{xypic}
\usepackage[pdftex]{graphicx}
\usepackage[usenames]{color}
\usepackage{upgreek}
\usepackage{mathtools}
\usepackage{stmaryrd}
\usepackage{bbm}


\paperheight=297mm
\paperwidth=210mm
\setlength{\oddsidemargin}{0pt}
\setlength{\evensidemargin}{0pt} \setlength{\headheight}{12pt}
\setlength{\footskip}{36pt}
\setlength{\hoffset}{0in}\setlength{\voffset}{-24pt}
\setlength{\topmargin}{0pt}
\setlength{\headsep}{12pt} \setlength{\marginparwidth}{0pt}
\setlength{\marginparpush}{0pt} \setlength{\textwidth}{210mm}
\addtolength{\textwidth}{-2in} \setlength{\textheight}{297mm}
\addtolength{\textheight}{-24pt}
\addtolength{\textheight}{-2in}


\theoremstyle{plain}
\newtheorem{theorem}{Theorem}[section]
\newtheorem{lemma}[theorem]{Lemma}
\newtheorem{corollary}[theorem]{Corollary}
\newtheorem{prop}[theorem]{Proposition}
\newtheorem{prop-def}[theorem]{Proposition / Definition}

\newtheorem{conj}[theorem]{Conjecture}

\theoremstyle{remark}

\newtheorem{remark}[theorem]{Remark}

\newtheorem{example}[theorem]{Example}

\newtheorem*{note*}{Note}
\newtheorem*{remark*}{Remark}
\newtheorem*{example*}{Example}

\theoremstyle{definition}
\newtheorem*{definition*}{Definition}
\newtheorem*{hypothesis*}{Hypothesis}
\newtheorem*{assumptions*}{Assumptions}
\newtheorem{definition}[theorem]{Definition}


\newcommand{\Z}{\mathbb{Z}}
\newcommand{\R}{\mathbb{R}}
\newcommand{\Q}{\mathbb{Q}}
\newcommand{\C}{\mathbb{C}}
\newcommand{\N}{\mathbb{N}}
\newcommand{\F}{\mathbb{F}}
\newcommand{\Aff}{\mathrm{Aff}}

\newcommand{\Aut}{\mathrm{Aut}}
\newcommand{\Gal}{\mathrm{Gal}}
\newcommand{\Tr}{\mathrm{Tr}}
\newcommand{\GL}{\mathrm{GL}}
\newcommand{\Norm}{\mathrm{Norm}}

\newcommand{\End}{\mathrm{End}}
\newcommand{\Hom}{\mathrm{Hom}}

\newcommand{\res}{\mathrm{res}}
\newcommand{\quot}{\mathrm{quot}}

\newcommand{\tors}{\mathrm{tors}}
\newcommand{\rat}{\mathrm{rat}}
\newcommand{\Irr}{\mathrm{Irr}}
\newcommand{\ind}{\mathrm{ind}}

\newcommand{\ETNC}{\mathrm{ETNC}}

\newcommand{\LTC}{\mathrm{LTC}}
\newcommand{\loc}{\mathrm{loc}}
\newcommand{\Spec}{\mathrm{Spec}}

\newcommand{\id}{\mathrm{id}}

\newcommand{\Leo}{\mathrm{Leo}}
\newcommand{\Char}{\mathrm{Char}}
\newcommand{\Perm}{\mathrm{Perm}}
\newcommand{\Rat}{\mathrm{Rat}}
\newcommand{\odd}{\mathrm{odd}}
\newcommand{\Disc}{\mathrm{Disc}}
\newcommand{\Cl}{\mathrm{Cl}}

\numberwithin{equation}{section}
\usepackage{tabto}


\newcommand{\et}{\mathrm{\acute{e}t}}

\newcommand{\infl}{\mathrm{infl}}

\newcommand{\cyc}{\mathrm{cyc}}

\hyphenation{Grothen-dieck}

\title[{On the $p$-adic Stark conjecture at $s=1$ and applications}]{On the $p$-adic Stark conjecture at $s=1$ \\ and applications}

\address{
Fachbereich Mathematik\\
Technische Universit\"at Kaiserslautern\\
67663 Kaiserslautern\\
Germany}
\email{thofmann@mathematik.uni-kl.de}
\urladdr{http://www.mathematik.uni-kl.de/$\sim$thofmann}

\author{Henri Johnston}
\address{
Department of Mathematics\\
University of Exeter\\
Exeter\\
EX4 4QF\\
United Kingdom
}
\email{H.Johnston@exeter.ac.uk}
\urladdr{http://emps.exeter.ac.uk/mathematics/staff/hj241}

\author[Andreas Nickel]{Andreas Nickel\\ \\ \tiny{with an appendix by Tommy Hofmann,
		Henri Johnston and Andreas Nickel}}
\address{
Universit\"at Duisburg-Essen\\
Fakult\"at f\"ur Mathematik\\
Thea-Leymann-Stra\ss e 9\\
D-45127 Essen\\
Germany}
\email{andreas.nickel@uni-due.de}
\urladdr{https://www.uni-due.de/$\sim$hm0251/index.html}

\subjclass[2010]{11R23, 11R42}
\keywords{Stark's conjectures, Leopoldt's conjecture, leading term conjectures, equivariant Tamagawa number conjecture,  equivariant $L$-values}
\date{Version of 12th October 2019}
\begin{document}

\begin{abstract}
Let $E/F$ be a finite Galois extension of totally real number fields and let $p$ be a prime.
The `$p$-adic Stark conjecture at $s=1$' relates the leading terms at $s=1$ of
$p$-adic Artin $L$-functions to those of the complex Artin $L$-functions attached to $E/F$.
We prove this conjecture unconditionally when $E/\Q$ is abelian. 
We also show that for certain non-abelian extensions $E/F$ the $p$-adic Stark conjecture at $s=1$ is implied by 
Leopoldt's conjecture for $E$ at $p$. Moreover, we prove that for a fixed prime $p$, the $p$-adic Stark conjecture at $s=1$
for $E/F$ implies Stark's conjecture at $s=1$ for $E/F$. 
This leads to a `prime-by-prime' descent theorem for the `equivariant Tamagawa number conjecture' (ETNC) for Tate motives at $s=1$.
As an application of these results, we provide strong new evidence for special cases of the ETNC for Tate motives and the closely related `leading term conjectures'
at $s=0$ and $s=1$.
\end{abstract}

\maketitle

\section{Introduction}\label{sec:introduction}

Let $E/F$ be a finite Galois extension of totally real number fields and let $G=\Gal(E/F)$. Let $p$ be a prime.
In the case that $G$ is abelian, the fundamental work of Deligne and Ribet \cite{MR579702} and of Pierette Cassou-Nogu{\`e}s \cite{MR524276} shows that
one can attach to each irreducible character of $G$ a $p$-adic Artin $L$-function that interpolates values of the corresponding
complex Artin $L$-function at negative integers. This construction can be generalised to the case that $G$ is non-abelian by using Brauer induction.

Roughly speaking, the `$p$-adic Stark conjecture at $s=1$' relates the leading terms at $s=1$ of the complex Artin $L$-function and the $p$-adic Artin $L$-function attached to a character of $G$. For the trivial character it is equivalent to 
the `$p$-adic class number formula' of Colmez \cite{MR922806} together with Leopoldt's conjecture for $F$ at $p$. 
More generally, the leading terms of the two $L$-functions attached to a character of $G$ are related by a certain `comparison period', which is non-zero if Leopoldt's conjecture holds for $E$ at $p$. 

This conjecture is discussed by Tate in \cite{MR782485} where it is attributed to Serre \cite{MR0506177}.
Solomon \cite{MR1906480} noted some slight imprecisions in Tate's discussion and also gave an alternative formulation in the case that 
$G$ is abelian. Burns and Venjakob clarified and simplified Tate's formulation in the general case \cite{MR2290587,MR2749572}, 
and we shall use a slight variant of their version in the present article as it will be the most useful for applications. 
A precise formulation and a more detailed discussion are given in \S \ref{subsec:formulation-p-adic-Stark}.

In the present article, we prove the $p$-adic Stark conjecture at $s=1$ unconditionally when $E/\Q$ is abelian by 
building on work of Ritter and Weiss \cite{MR1423032} and using standard results on Dirichlet $L$-functions and Kubota-Leopoldt $p$-adic $L$-functions.
(Solomon \cite{MR1906480} proved a refinement of his version of the conjecture in this case, but under certain additional hypotheses; see Remark \ref{rmk:solomon-abs-abelian} for further details.)
Moreover, we show that the $p$-adic Stark conjecture at $s=1$ is implied by Leopoldt's conjecture for $E$ at $p$ when every character of $G$
is a virtual permutation character
(in particular, the symmetric groups $S_{n}$ have this property).
By combining the proofs of these two results, 
we also show that if $F=\Q$ and $G \cong \Aff(q) := \mathbb{F}_{q} \rtimes \mathbb{F}_{q}^{\times}$
where $\mathbb{F}_{q}$ is a finite field with $q$ elements and the semidirect product is defined by the natural action, then 
Leopoldt's conjecture for $E$ at $p$ again implies the $p$-adic Stark conjecture at $s=1$ for $E/\Q$.

These results are motivated by their applications to  
the equivariant Tamagawa number conjecture (ETNC) for Tate motives
and the closely related leading term conjectures, both at $s=0$ and at $s=1$.
Building on work of Bloch and Kato \cite{MR1086888}, Fontaine and Perrin-Riou \cite{MR1265546}, and Kato \cite{MR1338860},
Burns and Flach \cite{MR1884523} formulated
the ETNC for any motive over $\Q$ with the action of a semisimple $\Q$-algebra, 
describing the leading term at $s=0$ of an equivariant motivic $L$-function in terms of certain 
cohomological Euler characteristics. 
This is a powerful and unifying formulation which, in particular, 
recovers the Birch and Swinnerton-Dyer conjecture.
We refer the reader to the survey article \cite{MR2088713} for a more detailed overview.

Let $L/K$ be a finite Galois extension of number fields (not necessarily totally real) and let $G=\Gal(L/K)$.
In the case of Tate motives, the ETNC relates certain arithmetic complexes to 
the leading terms at integers of the equivariant complex Artin $L$-function attached to $L/K$.
Burns \cite{MR1863302} formulated the leading term conjecture (LTC) at $s=0$ and he showed that this conjecture  for $L/K$ is equivalent to the ETNC for the pair $(h^{0}(\Spec(L)), \Z[G])$. The advantage of this new formulation is that it is more explicit. Moreover, the LTC at $s=0$ recovers Stark's conjecture at $s=0$ (as interpreted by Tate in \cite{MR782485}),
the `strong Stark conjecture' of Chinburg \cite{MR724009} and Chinburg's `$\Omega(3)$-conjecture' \cite{MR724009,MR786352}. 
It is also equivalent to the `lifted root number conjecture' of Gruenberg, Ritter and Weiss \cite{MR1687551} and implies numerous other conjectures
involving leading terms of Artin $L$-functions at $s=0$ (see \cite[Lecture 3]{MR2882689} for a partial list of such conjectures).
Breuning and Burns \cite{MR2371375} formulated the LTC at $s=1$, which simultaneously refines both Stark's conjecture at $s=1$
(as formulated by Tate in \cite{MR782485}) and Chinburg's `$\Omega(1)$-conjecture' \cite{MR786352}. 
In \cite{MR2804251}, Breuning and Burns also showed that,
under the assumption of Leopoldt's conjecture (at all primes), the LTC at $s=1$ for $L/K$ is equivalent to the
ETNC for the pair $(h^{0}(\Spec(L))(1), \Z[G])$. If the `global epsilon constant conjecture' of Bley and Burns \cite{MR2005875} holds then the 
LTCs at $s=0$ and $s=1$ are equivalent (this is known when $L/K$ is at most tamely ramified, for example).

The main unconditional results to date on the above special cases of the ETNC are as follows
(by the above discussion most of these can also be phrased in terms of the LTCs).
Burns and Greither \cite{MR1992015} showed that if $L/\Q$ is a finite abelian extension and $r \in \Z$ with $r \leq 0$
then the ETNC holds for the pair $\smash{(h^{0}(\Spec(L))(r),\Z[\frac{1}{2}][G])}$;
Flach \cite{MR2863902} resolved important technical difficulties at the prime $p=2$, allowing $\smash{\Z[\frac{1}{2}]}$
in the above to be replaced with $\Z$.
Moreover, Burns and Flach \cite{MR2290586} showed that the analogous results hold when $r>0$.
Bley \cite{MR2226270} showed that if $L$ is a finite abelian extension of an imaginary quadratic field $K$ and if $p$
is an odd prime that splits in $K/\Q$ and does not divide the class number of $K$, then 
the ETNC for the pair $(h^{0}(\Spec(L)), \Z_{(p)}[G])$ holds, where $\Z_{(p)}$ is the localisation of $\Z$ at $p$.
Buckingham \cite{MR3187901} considered certain relative biquadratic extensions.
Now suppose $K=\Q$.
Burns and Flach \cite{MR1981031} showed that $(h^{0}(\Spec(L)),\Z[G])$ holds when $L/\Q$ belongs to a certain explicit infinite family of
$Q_{8}$-extensions of $\Q$, where $Q_{8}$ denotes the quaternion group of order $8$.
 Moreover,
both Navilarekallu \cite{MR2268756} and Janssen \cite{janssen-thesis} computationally verified 
$\smash{(h^{0}(\Spec(L)),\Z[G])}$ for particular $A_{4}$-extensions $L/\Q$, where $A_{4}$ denotes the alternating group of degree $4$.

The present authors \cite{MR3461042} made significant progress for certain finite non-abelian Galois extensions of $\Q$
by introducing `hybrid $p$-adic group rings' and using the functoriality properties of the ETNC to reduce to easier known cases.
In particular, if $p$ is a prime, $m$ is a positive integer and $L/\Q$ is a finite Galois extension with 
$G =\Gal(L/\Q) \cong \Aff(p^{m}) = \F_{p^{m}} \rtimes \F_{p^{m}}^{\times}$ then 
it was shown unconditionally that the ETNC holds for the pairs $\smash{(h^{0}(\Spec(L))(r), \Z[\frac{1}{p}][G])}$
where $r \in \{ 0,1\}$. However, these methods do not extend to give a proof of the ETNC at all primes.

We now return to the situation in which $E/F$ is a finite Galois extension of totally real number fields
and set $G=\Gal(E/F)$. 
Burns \cite{MR3294653} showed 
that under the assumption that certain $\mu$-invariants attached to $E$ and $p$ vanish for all odd prime divisors $p$ of $|G|$,
the $p$-adic Stark conjecture for all characters of $G$ and for all odd primes $p$ implies 
the ETNC for the pair $(h^{0}(\Spec(E))(1), \Z[\frac{1}{2}][G])$.
The proof relies crucially on the descent machinery of Burns and Venjakob \cite{MR2749572}, 
and on the equivariant Iwasawa main conjecture, which has been proven independently by Ritter and Weiss \cite{MR2813337} and by Kakde \cite{MR3091976}, under the assumption that the relevant $\mu$-invariant vanishes.

In the present article, we prove a refinement of the above result that allows us to work prime-by-prime.
In other words, we show that for a fixed odd prime $p$, the vanishing of the relevant $\mu$-invariant attached to $E$ and $p$
together with the $p$-adic Stark conjecture for all characters of $G$ imply the ETNC for the pair 
$(h^{0}(\Spec(E))(1), \Z_{(p)}[G])$.
A key ingredient is a direct proof that for any fixed prime $p$, the $p$-adic Stark conjecture at $s=1$ for all characters of $G$
implies Stark's conjecture at $s=1$ for all such characters. 
(This result is perhaps counter-intuitive because its hypotheses depend on a fixed prime $p$, yet its conclusion does not.)
By combining this prime-by-prime descent result with our new results on the $p$-adic Stark conjecture at $s=1$,
we obtain new evidence for the relevant case of the ETNC.
Thus by tweaking the aforementioned result of Breuning and Burns \cite{MR2804251} to work prime-by-prime, we also obtain results for the LTC at $s=1$. Another ingredient used in some situations is the present authors' aforementioned work on hybrid $p$-adic group rings \cite{MR3461042}.
By using known cases (and by proving a new case) of the global epsilon constant conjecture,
then we obtain results on the LTC at $s=0$ and thus the ETNC for the pair $(h^{0}(\Spec(E)), \Z[G])$.
  
We now give three concrete examples of the new results obtained.
For the first example, let $p$ be an odd prime and let $m$ be a positive integer.
If $E/\Q$ is any totally real Galois extension with
$\Gal(E/\Q) \cong \Aff(p^{m}) = \F_{p^{m}} \rtimes \F_{p^{m}}^{\times}$
then under the assumption that Leopoldt's conjecture for $E$ at $p$ holds, the LTC for $E/\Q$ at $s=1$ holds.
We note that Leopoldt's conjecture for a given number field and prime can be verified computationally 
(see the Appendix).
For the second example, let $C_{n}$ denote the cyclic group of order $n$ and let $G=(C_{3})^{m} \rtimes C_{2}$ where $m$ is a positive integer
and $C_{2}$ acts on $(C_{3})^{m}$ by inversion (in the case $m=1$ we have $G \cong S_{3}$).
If $E/\Q$ is any totally real Galois extension with $\Gal(E/\Q) \cong G$ then 
under the assumption that Leopoldt's conjecture for $E$ at $3$ holds, the LTCs for $E/\Q$ at $s=0$ and $s=1$ both hold.
For the final example, let $G$ be any finite group. 
Then there exist infinitely many Galois extensions of totally real number fields $E/F$ with $\Gal(E/F) \cong G$
such that, if for all odd prime divisors $p$ of $|G|$ both
Leopoldt's conjecture holds for $E$ at $p$ and a certain $\mu$-invariant attached to $E$ and $p$ vanishes, then
the LTCs for $E/F$ at $s=0$ and $s=1$ both hold outside their $2$-primary parts.

In the Appendix (joint with Tommy Hofmann) we use the second result described in the above paragraph 
together with a computational verification of Leopoldt's conjecture at $p=3$ to obtain the 
LTCs at $s=0$ and $s=1$
for all totally real Galois extensions $E/\Q$ with $\Gal(E/\Q) \cong S_{3}$ and $\Disc(\mathcal{O}_{E}) < 10^{20}$ 
(there are $492\, 335$ such extensions). We also obtain the analogous result for all totally real Galois extensions 
$E/\Q$ with $\Gal(E/\Q) \cong D_{12}$ (the dihedral group with $12$ elements) and $\Disc(\mathcal{O}_{E}) < 10^{30}$
(there are $24\, 283$ such extensions).

\subsection*{Acknowledgements}
The first named author acknowledges financial support provided by EPSRC First Grant EP/N005716/1 `Equivariant Conjectures in Arithmetic'.
The second named author acknowledges financial support provided by the DFG 
within the Collaborative Research Center 701
`Spectral Structures and Topological Methods in Mathematics'.
The authors are indebted to Werner Bley for his help in running the \textsc{Magma} \cite{MR1484478} code bundled in Debeerst's PhD thesis \cite{debeerst-thesis}
(this is used in the proof of Theorem \ref{thm:GEGG-for-C3m:C2-extensions}).
The authors are also grateful to Alex Bartel, Tim Dokchitser, Xavier-Fran\c{c}ois Roblot and Otmar Venjakob for helpful conversations and correspondence, and to two anonymous referees for helpful comments and suggestions.

\subsection*{Notation and conventions}
All rings are assumed to have an identity element and all modules are assumed
to be left modules unless otherwise stated. We fix the following notation:

\medskip

\begin{tabular}{ll}
$S_{n}$ & the symmetric group of degree $n$ \\
$A_{n}$ & the alternating group of degree $n$  \\
$C_{n}$ & the cyclic group of order $n$ \\
$D_{2n}$ & the dihedral group of order $2n$ \\
$V_{4}$ & the subgroup of $A_{4}$ generated by double transpositions\\
$\F_{q}$ & the finite field with $q$ elements, where $q$ is a prime power\\
$\Aff(q)$ & the affine group isomorphic to $\F_{q} \rtimes \F_{q}^{\times}$ defined in \S \ref{subsec:affine}\\
$\zeta_{n}$ & a primitive $n$-th root of unity\\
$\Tr_{L/K}$ & the trace map for the field extension $L/K$\\
$Quot(R)$ & the field of fractions of an integral domain $R$\\
$M_{n}(R)$ & the ring of $n \times n$ matrices over a ring $R$\\
$\Sigma_{\infty}(K)$ & the set of infinite places of a number field $K$\\
$\Sigma_{p}(K)$ & the set of places of a number field $K$ above a rational prime $p$\\
\end{tabular}

\medskip
\noindent
A finite Galois extension of totally real number fields will usually be denoted by $E/F$.
By contrast, $L/K$ will usually denote a finite Galois extension of number fields, neither of which is necessarily totally real.

\section{Algebraic Preliminaries}

\subsection{Representations and characters of finite groups}\label{subsec:reps-and-chars-of-finite-groups}
Let $G$ be a finite group and let $K$ be a field of characteristic $0$. 
We write $R_{K}^{+}(G)$ for the set of characters attached to finite-dimensional $K$-valued representations of $G$,
and $R_{K}(G)$ for the ring of virtual characters generated by $R_{K}^{+}(G)$.  
Moreover, we let $\Irr_{K}(G)$ denote the subset of irreducible characters in $R_{K}^{+}(G)$ 
and let $\Char_{K}(G)$ denote the ring of $K$-valued virtual characters of $G$.
Thus we have containments
\[
\Irr_{K}(G) \subset R_{K}^{+}(G) \subset R_{K}(G) \subset \Char_{K}(G).
\]

We let $\mathbbm{1}_{G}$ denote the trivial character of $G$ and for $\chi, \psi \in R_{K}(G)$ we write $\langle \psi, \chi \rangle_{G}$
for the usual inner product of virtual characters.
For a subgroup $H$ of $G$ and $\psi \in R_{K}^{+}(H)$ we write $\ind_{H}^{G} \psi \in R_{K}^{+}(G)$ for the induced character;
for a normal subgroup $N$ of $G$ and $\chi \in R_{K}^{+}(G/N)$ we write $\infl_{G/N}^{G} \chi \in R_{K}^{+}(G)$ for the inflated character.
For $\sigma \in \Aut(K)$ and $\chi \in \Char_{K}(G)$ we set 
$\chi^{\sigma} := \sigma \circ \chi$ and note that this defines a group action
from the left even though we write exponents on the right of $\chi$.

We write $\Perm(G)$ for the ring of characters of virtual permutation representations of $G$, that is, $\Z$-linear combinations of characters of the form $\ind^{G}_{H} \mathbbm{1}_{H}$ where $H$ ranges over subgroups of $G$. It is important to note that each of the inclusions
\[
\Perm(G) \subset R_{\Q}(G) \subset \Char_{\Q}(G)
\]
may be strict. 

\subsection{Endomorphisms of modules over group algebras}\label{subsec:endomorphisms}
Let $G$ be a finite group and let $K$ be a field of characteristic $0$. 
For any $\chi \in R_{K}^{+}(G)$ we fix a $K[G]$-module $V_{\chi}$ with character $\chi$.
For any $K[G]$-module $M$ with $\dim_{K} M < \infty$ and any $\alpha \in \End_{K[G]}(M)$ we write
$M^{\chi}$ for the $K$-vector space 
\[
\Hom_{K[G]}(V_{\chi}, M) 
\cong \Hom_{K}(V_{\chi}, M)^{G} 
\] 
and $\alpha^{\chi}$ for the induced map $(f \mapsto \alpha \circ f) \in \End_{K}(M^{\chi})$.
We note that $\det_{K}(\alpha^{\chi})$ is independent of the choice of $V_{\chi}$.
The following is similar to \cite[Chapitre I, 6.4]{MR782485}.

\begin{lemma}\label{lem:properties-of-rho-parts-functor}
Let $M$ be a $K[G]$-module with $\dim_{K} M < \infty$ and let $\alpha \in \End_{K[G]}(M)$.
Let $H$ be a subgroup of $G$ and let $M |_{H}$ denote $M$ considered as a $K[H]$-module.
Let $N$ be a normal subgroup of $G$ and let $M^{N}$ denote the $K[G/N]$-module of $N$-invariants of $M$.
\begin{enumerate}
\item If $\chi_{1},\chi_{2} \in R_{K}^{+}(G)$ then  
${\det}_{K}(\alpha^{\chi_{1} + \chi_{2}}) = {\det}_{K}(\alpha^{\chi_{1}})  {\det}_{K}(\alpha^{\chi_{2}})$.
\item If $\chi \in R^{+}_{K}(H)$ then $M^{\ind^{G}_{H} \chi} \cong (M |_{H})^{\chi}$ and 
${\det}_{K}(\alpha^{\ind^{G}_{H} \chi}) = {\det}_{K}(\alpha^{\chi})$.
\item If  $\chi \in R^{+}_{K}(G/N)$ then $M^{\infl^{G}_{G/N} \chi} \cong (M^{N})^{\chi}$ and 
${\det}_{K}(\alpha^{\infl^{G}_{G/N} \chi}) = {\det}_{K}( (\alpha |_{M^{N}})^{\chi})$.
\end{enumerate} 
\end{lemma}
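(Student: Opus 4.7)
The plan is to prove each part as a direct consequence of a standard natural isomorphism of $\Hom$-spaces, and in each case to verify that postcomposition with $\alpha$ is transported correctly under the isomorphism so that taking determinants gives the claimed identity. Throughout, I will repeatedly use the standing observation that $M^{\chi} = \Hom_{K[G]}(V_{\chi}, M)$ depends on $V_{\chi}$ only up to (non-canonical) isomorphism, so only the character $\chi$ of $V_{\chi}$ matters for $\det_{K}(\alpha^{\chi})$.

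For (i), I would choose $V_{\chi_{1}+\chi_{2}} = V_{\chi_{1}} \oplus V_{\chi_{2}}$, which has the correct character. The functor $\Hom_{K[G]}(-,M)$ turns direct sums into direct sums, giving
\[
M^{\chi_{1}+\chi_{2}} \cong M^{\chi_{1}} \oplus M^{\chi_{2}},
\]
and postcomposition by $\alpha$ acts diagonally under this decomposition, namely as $\alpha^{\chi_{1}} \oplus \alpha^{\chi_{2}}$. Hence $\det_{K}(\alpha^{\chi_{1}+\chi_{2}})$ is the product $\det_{K}(\alpha^{\chi_{1}}) \det_{K}(\alpha^{\chi_{2}})$.

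For (ii), I would invoke Frobenius reciprocity in the form
\[
\Hom_{K[G]}\bigl(\ind_{H}^{G} V_{\chi}, M\bigr) \;\cong\; \Hom_{K[H]}\bigl(V_{\chi}, M|_{H}\bigr),
\]
which yields the isomorphism $M^{\ind_{H}^{G}\chi} \cong (M|_{H})^{\chi}$. The key point is that this isomorphism is natural in $M$; since $\alpha$ is $K[G]$-linear and hence also $K[H]$-linear (its restriction being the endomorphism of $M|_{H}$ induced by $\alpha$ on the underlying $K$-vector space), the operator induced by postcomposition with $\alpha$ on the left-hand side corresponds to the operator induced by postcomposition with $\alpha$ on the right-hand side. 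Taking $\det_{K}$ of the common conjugacy class of operators gives the desired equality.

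For (iii), I would use the canonical isomorphism
\[
\Hom_{K[G]}\bigl(\infl_{G/N}^{G} V_{\chi}, M\bigr) \;\cong\; \Hom_{K[G/N]}\bigl(V_{\chi}, M^{N}\bigr),
\]
which holds because any $K[G]$-linear map from a module on which $N$ acts trivially must land in $M^{N}$. Since $\alpha$ is $K[G]$-linear it preserves $M^{N}$, so $\alpha|_{M^{N}}$ is a well-defined $K[G/N]$-endomorphism of $M^{N}$; naturality in $M$ of the isomorphism above then identifies postcomposition by $\alpha$ with postcomposition by $\alpha|_{M^{N}}$, and taking $\det_{K}$ yields the formula. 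There is no genuine obstacle here: the entire lemma is a bookkeeping exercise whose only substantive content is the verification, in each of the three cases, that the relevant standard natural isomorphism intertwines the two postcomposition operators.
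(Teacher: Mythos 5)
Your proposal is correct and takes essentially the same route as the paper: part (i) is the same block/direct-sum observation, and parts (ii) and (iii) invoke exactly the same natural isomorphisms (Frobenius reciprocity and the inflation/invariants adjunction) with naturality in $M$ doing the bookkeeping. The paper's proof is just a more compressed version of your argument.
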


\begin{proof}
In the appropriate bases, the matrix for $\alpha^{\chi_{1}+\chi_{2}}$ is a block matrix whose blocks are
the matrices for $\alpha^{\chi_{1}}$ and  $\alpha^{\chi_{2}}$, and this gives claim (i).
Claim (ii) follows from Frobenius reciprocity, i.e., the natural isomorphism 
$\Hom_{K[G]}(\ind_{H}^{G} V_{\chi}, M) \cong \Hom_{K[H]}(V_{\chi}, M |_{H})$.
Similarly, the natural isomorphism 
$\Hom_{K[G]}(\infl_{G/N}^{G} V_{\chi}, M) \cong \Hom_{K[G/N]}(V_{\chi}, M^{N})$ gives claim (iii).
\end{proof}

\section{Stark's conjecture at $s=1$}

\subsection{Artin $L$-functions}
Let $L/K$ be a finite Galois extension of number fields and let $G=\Gal(L/K)$.
Let $\Sigma$ be a finite set of places of $K$ containing the set of infinite places $\Sigma_{\infty}(K)$.
For each character $\chi \in R_{\C}(G)$ we write $L_{\Sigma}(s,\chi)$ for the $\Sigma$-truncated 
(complex) Artin $L$-function attached to $\chi$ (see \cite[Chapter VII, \S 10]{MR1697859}). 
We recall that $L_{\Sigma}(s,\chi_{1}+\chi_{2})=L_{\Sigma}(s,\chi_{1})L_{\Sigma}(s,\chi_{2})$
and that $L_{\Sigma}(s,\chi)$ is invariant under induction and inflation of characters.
Moreover, $L_{\Sigma}(s,\mathbbm{1}_{G}) = \zeta_{K,\Sigma}(s)$, the $\Sigma$-truncated Dedekind zeta-function attached to $K$,
which has a simple pole at $s=1$. In fact, writing $L_{\Sigma}^{*}(1,\chi)$ for the leading term at $s=1$ of $L_{\Sigma}(s,\chi)$,
it is well-known that 
\begin{equation}\label{eq:Artin-L-order-of-vanishing}
L_{\Sigma}^{*}(1,\chi) = \lim_{s \rightarrow 1} (s-1)^{\langle \mathbbm{1}_{G}, \chi \rangle_{G}} \cdot L_{\Sigma}(s,\chi)
\end{equation}
(see the discussion of \cite[p.\ 225]{MR0218327}, for instance).
Here the crucial point is the correct order of vanishing.

\subsection{Stark's conjecture at $s=1$}
For any place $w$ of $L$ we write $L_{w}$ for the completion of $L$ at $w$.
Let $L_{\infty} := \prod_{w \in \Sigma_{\infty}(L)} L_{w}$. 
Define $\Tr_{\infty} : L_{\infty} \rightarrow \R$ by $(x_{w})_{w \in \Sigma_{\infty}(L)} \mapsto \sum_{w \in \Sigma_{\infty}(L)} \Tr_{L_{w}/\R}(x_{w})$
and denote the kernels of the trace maps by $L^{0}_{\infty} := \ker(\Tr_{\infty})$ and $L^{0} := \ker(\Tr_{L/\Q})$.
Then we have a commutative diagram of $\R[G]$-modules with exact rows
\[
\xymatrix{
0  \ar[rr] & & {\R \otimes_{\Q} L^{0}} \ar[rr]^{\subset} \ar[d]^{\mu_{L}} & &
{\R \otimes_{\Q} L} \ar[rr]^{\quad \R \otimes_{\Q} \Tr_{L/\Q}} \ar[d]^{\mu_{L}'} & & \R \ar@{=}[d] \ar[rr] & & 0\\
0  \ar[rr] & & {L^{0}_{\infty}} \ar[rr]^{\subset} & &
L_{\infty} \ar[rr]^{\Tr_{\infty}} & & \R \ar[rr] & & 0
} 
\]
where $\mu_{L}$ is the restriction of the $\R[G]$-module isomorphism $\mu_{L}': \R \otimes_{\Q} L \xrightarrow{\, \sim \,}  L_{\infty}$
given by $x \otimes y \mapsto (x \sigma_{w}(y))_{w \in \Sigma_{\infty}(L)}$ and $\sigma_{w}: L \rightarrow L_{w}$ is an embedding for each infinite place $w$. 
Let $\exp_{\infty} : L_{\infty} \rightarrow  L_{\infty}^{\times}$ denote the product of exponential maps and
let $\Delta_{\infty} : L^{\times} \rightarrow L_{\infty}^{\times}$ denote the diagonal embedding. 
Set $\log_{\infty}(\mathcal{O}_{L}^{\times}) :=  \{ x \in L_{\infty} : \exp_{\infty}(x) \in \Delta_{\infty}(\mathcal{O}_{L}^{\times}) \}$.
Then one can use the proof of the Dirichlet unit theorem to show that $\log_{\infty}(\mathcal{O}_{L}^{\times})$
is a full lattice in $L^{0}_{\infty} \cong \R \otimes_{\Q} L^{0}$ (see \cite[Lemma 6.3]{debeerst-thesis}), and so 
there is an isomorphism of $\C[G]$-modules 
\begin{equation}\label{eqn:mu-infty}
\mu_{\infty} : \C \otimes_{\Z} \log_{\infty}(\mathcal{O}_{L}^{\times}) \xrightarrow{\, \sim \,} \C \otimes_{\Q} L^{0}. 
\end{equation}
(Note that $\mu_{L}'$, $\mu_{L}$ and $\mu_{\infty}$ are canonical when $L$ is totally real; otherwise they depend of the choice of embeddings $\sigma_{w}: L \rightarrow L_{w}$ for complex places $w$.)
Hence there exists a (non-canonical) $\Q[G]$-isomorphism
$g: L^{0} \xrightarrow{\, \sim \,} \Q \otimes_{\Z} \log_{\infty}(\mathcal{O}_{L}^{\times})$
(use \cite[Lemma 8.7.1]{MR2392026}, for instance).
For any such $g$ and any $\chi \in R_{\C}^{+}(G)$ we define
\[
R_{1}(\chi,g) := {\det}_{\C} (\mu_{\infty} \circ (\C \otimes_{\Q} g))^{\chi} \in \C^{\times}.
\]

\begin{conj}[Stark's conjecture at $s=1$]\label{conj:Stark-at-1}
Let $L/K$ be a finite Galois extension of number fields and let $G=\Gal(L/K)$.
Let $\Sigma$ be a finite set of places of $K$ containing $\Sigma_{\infty}(K)$.
Let $\chi \in R_{\C}^{+}(G)$. Then for every $\Q[G]$-isomorphism
$g: L^{0} \xrightarrow{\, \sim \,} \Q \otimes_{\Z} \log_{\infty}(\mathcal{O}_{L}^{\times})$ and every $\sigma \in \Aut(\C)$
we have
\[
	\frac{R_{1}(\chi^{\sigma},g)}{L_{\Sigma}^{\ast}(1, \chi^{\sigma})} = 
	\sigma \left(\frac{R_{1}(\chi,g)}{L_{\Sigma}^{\ast}(1, \chi)}\right).
\]
\end{conj}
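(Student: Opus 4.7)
The plan is to follow the classical strategy for Stark's conjecture: establish well-definedness, reduce via Brauer induction to the abelian one-dimensional case, and then invoke known formulas for abelian $L$-values. First I would check that the statement is independent of the choice of $g$: if $g'$ is another such $\Q[G]$-isomorphism, then $h := g' \circ g^{-1} \in \Aut_{\Q[G]}(\Q \otimes_{\Z} \log_{\infty}(\mathcal{O}_{L}^{\times}))$ is defined over $\Q$, and a direct calculation from the definition of $(-)^{\chi}$ gives
\[
R_{1}(\chi, g') = {\det}_{\C}(h^{\chi}) \cdot R_{1}(\chi, g).
\]
Since $\det_{\C}(h^{\chi}) \in \Q(\chi)^{\times}$ satisfies $\sigma({\det}_{\C}(h^{\chi})) = {\det}_{\C}(h^{\chi^{\sigma}})$ for every $\sigma \in \Aut(\C)$, the conjecture for $g$ is equivalent to the conjecture for $g'$, so we may choose $g$ freely.

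Next, Lemma \ref{lem:properties-of-rho-parts-functor} together with the standard additivity, induction-invariance and inflation-invariance of Artin $L$-functions shows that the truth of the conjecture is additive in $\chi$ and compatible with induction and inflation (once one matches up the corresponding regulator data on the subextension or the quotient extension, using the canonical comparison of unit groups). By Brauer's induction theorem it therefore suffices to treat characters of the form $\chi = \ind_{H}^{G} \psi$ with $H \leq G$ an elementary subgroup and $\psi$ one-dimensional. Passing to the subextension $L/L^{H}$ and then inflating away $\ker \psi$, we reduce to the case in which $G$ is abelian and $\chi$ is one-dimensional.

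In this abelian one-dimensional situation, the trivial character is handled by the $\Sigma$-truncated analytic class number formula applied to $\zeta_{K,\Sigma}(s)$. For nontrivial $\chi$ over $K = \Q$, one uses the classical expression of $L_{\Sigma}(1,\chi)$ as a $\Q(\chi)$-linear combination of logarithms of absolute values of cyclotomic units of $\Q(\zeta_{f_{\chi}})$; writing $g$ in terms of an explicit basis of cyclotomic units and unpacking the definition of $R_{1}(\chi,g)$ on each $\chi$-isotypic component then gives the Galois-equivariance statement by direct comparison. This is the substance of Stark's original unconditional result over $\Q$, and Tate extended it to rational-valued characters over arbitrary $K$ in \cite{MR782485}.

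The main obstacle is the abelian step over a general base $K$: when $\chi$ is not rational-valued and $K \neq \Q$, no sufficiently explicit closed formula for $L_{\Sigma}^{\ast}(1,\chi)$ is known, and Conjecture \ref{conj:Stark-at-1} remains open in this generality. For this reason, rather than attempting a direct proof, in the sequel the authors prove Conjecture \ref{conj:Stark-at-1} as a \emph{consequence} of the (finer) $p$-adic Stark conjecture at $s=1$, exploiting the fact that the $p$-adic statement carries enough algebraic information to force the desired Galois-equivariance on the complex side.
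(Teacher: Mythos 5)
This statement is a \emph{conjecture}, not a theorem of the paper, and the paper offers no proof of it; it is stated precisely because it is open in general. Your proposal correctly identifies this, and everything you say about the known reductions matches the paper's own discussion: independence of $g$ is Remark \ref{rmk:stark-1-independent-of-choices}(i) (citing Tate), invariance under induction, inflation and addition is recorded in Remark \ref{rmk:Stark-s=01-for-rational-valued-characters}, and the fact that the conjecture is known for $\chi \in \Char_{\Q}(G)$ is exactly what the paper attributes to Tate there. Your observation that the abelian, non-rational, non-$\Q$-base case is the genuine obstruction is accurate, and your final remark — that the authors instead prove Conjecture~\ref{conj:Stark-at-1} \emph{conditionally}, as a consequence of the $p$-adic Stark conjecture at $s=1$ — is precisely the content of Theorem~\ref{thm:p-stark-implies-stark} and Corollary~\ref{cor:p-stark-implies-stark}.

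One small refinement worth flagging: Brauer induction by itself does not reduce to characters $\chi \in R_{\C}^{+}(G)$ but only to $\Z$-linear combinations of induced linear characters, so one must first observe (as the paper does implicitly via its remarks on closure under $\Z$-linear combinations) that the validity set of the conjecture in $R_{\C}(G)$ is a subgroup; this is straightforward since each leading term and each regulator is nonzero. Also, in the abelian one-dimensional case over $K=\Q$, you appeal to cyclotomic units and $L_{\Sigma}^{*}(1,\chi)$ formulas — this is correct, but note the paper's analogous explicit computation appears in the proof of Theorem~\ref{thm:p-adic-Stark-for-absolutely-abelian-characters} (for the $p$-adic side, via $\xi_{n}$ and \cite[Lemma 8.4]{MR1421575}), which may help you pin down the details if you wish to flesh out the archimedean side.
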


\begin{remark}\label{rmk:reformulation-stark-1}
Conjecture \ref{conj:Stark-at-1} is in fact a reformulation of Stark's conjecture at $s=1$ as stated in
{\cite[Chapitre I, Conjecture 8.2]{MR782485}}; the two formulations are indeed equivalent as explained 
in the second paragraph of the proof of \cite[Proposition 3.6]{MR2371375}.
Moreover, by {\cite[Chapitre I, Th\'eor\`eme 8.4]{MR782485}} 
Stark's conjecture at $s=1$ is equivalent to Stark's conjecture at $s=0$ ({\cite[Chapitre I, Conjecture 5.4]{MR782485}}). 
\end{remark}

\begin{remark}\label{rmk:stark-1-independent-of-choices}
Stark's conjecture at $s=1$ is independent of certain choices as follows. (i) By {\cite[Chapitre I, Proposition 8.3]{MR782485}}, if it is true for some choice of $g$ then it is true for every choice of $g$. (ii) By considering Euler factors one can show that if it is true for some choice of $\Sigma$ then it is true for every choice of $\Sigma$.
(iii) A straightforward substitution shows that if it is true for $\chi$ then it is true for $\chi^{\tau}$ for every choice of $\tau \in \Aut(\C)$.
\end{remark}

\begin{remark}\label{rmk:Stark-s=01-for-rational-valued-characters}
The following facts are proven in \cite[Chapitre II]{MR782485}.
Using the analytic class number formula, one can show that Stark's conjectures at $s=0$ and $s=1$ hold for the trivial character $\mathbbm{1}_{G}$.
Moreover, the truth of these conjectures is invariant under induction and respects addition of characters, and from this it is straightforward 
to deduce that they hold for all $\chi \in \Perm(G)$. 
With more effort, one can show that in fact they hold for all $\chi \in \Char_{\Q}(G)$
(recall that the inclusion $\Perm(G) \subset \Char_{\Q}(G)$ can be strict).
\end{remark}

\section{The $p$-adic Stark conjecture at $s=1$}

\subsection{Leopoldt's conjecture}
For a comprehensive discussion of Leopoldt's conjecture, we refer the reader to \cite[Chapter X, \S 3]{MR2392026}.
Let $K$ be a number field.
For a finite place $w$ of $K$, let $U_{K_{w}}$ denote the group of units of the completion $K_{w}$ and let $U_{K_{w}}^{1}$ denote the subgroup of principal units.
Let $p$ be a prime and let $\Sigma_{p}(K)$ denote the set of places of $K$ above $p$. 
After taking $p$-adic completions of abelian groups, the diagonal embedding 
$\mathcal{O}_{K}^{\times} \longrightarrow  \prod_{w \in \Sigma_{p}(K)} U_{K_{w}}$
gives rise to a canonical homomorphism 
\begin{equation*}\label{eq:lambda_p}
\lambda_{p}: \Z_{p} \otimes_{\Z} \mathcal{O}_{K}^{\times} \longrightarrow \textstyle{\prod_{w \in \Sigma_{p}(K)}} U_{K_{w}}^{1}. 
\end{equation*}
We say that $\Leo(K,p)$ holds when $\lambda_{p}$ is injective and we take our formulation of Leopoldt's conjecture for $K$ at $p$ to be that of \cite[(10.3.5)]{MR2392026}.
Moreover, we write $R_{F,p}$ for the $p$-adic regulator of a totally real number field $F$ (see \cite[(10.3.4)]{MR2392026}).
We now recall the following results that we shall use throughout this article, often without further reference.

\begin{theorem}\label{thm:leo-results}
Let $p$ be prime and let $K$ be a number field.
\begin{enumerate}
\item Leopoldt's conjecture for $K$ at $p$ holds if and only if $\Leo(K,p)$ holds.
\item The homomorphism $\Q_{p} \otimes_{\Z_{p}} \lambda_{p}$ is injective if and only if $\Leo(K,p)$ holds.
\item If $K/\Q$ is a finite abelian extension then $\Leo(K,p)$ holds.
\item If $M$ is a subfield of $K$ then $\Leo(K,p)$ implies $\Leo(M,p)$.
\item If $F$ is a totally real number field, then $R_{F,p} \neq 0$ if and only if $\Leo(F,p)$ holds.
\end{enumerate}
\end{theorem}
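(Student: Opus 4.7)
The plan is to verify each of the five parts by either invoking a standard result from the literature or performing a short homological argument built around flatness of $\Q_p$ over $\Z_p$.

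Part (i) is essentially a matter of comparing formulations: our definition of $\Leo(K,p)$ (injectivity of $\lambda_p$) coincides with the one adopted in \cite[(10.3.5)]{MR2392026}, which is shown there to be equivalent to Leopoldt's original formulation. Part (iii) is the celebrated theorem of Brumer, whose proof rests on Ax's $p$-adic analogue of Baker's theorem on linear independence of logarithms of algebraic numbers; I would simply cite it. This is the conceptual heart of the theorem and cannot be obtained by elementary means.

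For (ii), the key observation is that $\ker(\lambda_p)$ is torsion-free as a $\Z_p$-module. Indeed, the torsion submodule of the finitely generated $\Z_p$-module $\Z_p \otimes_{\Z} \mathcal{O}_K^{\times}$ is $\mu_{p^{\infty}}(K)$, and this embeds injectively into $U_{K_w}^{1}$ for any single $w \in \Sigma_p(K)$, hence a fortiori into the product. Since a finitely generated torsion-free $\Z_p$-module $N$ satisfies $N = 0$ iff $\Q_p \otimes_{\Z_p} N = 0$, and $\Q_p$ is flat over $\Z_p$, we obtain
\[
\ker(\lambda_p) = 0 \;\Longleftrightarrow\; \Q_p \otimes_{\Z_p} \ker(\lambda_p) = 0 \;\Longleftrightarrow\; \ker(\Q_p \otimes_{\Z_p} \lambda_p) = 0.
\]

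For (iv), I would form the commutative square
\[
\xymatrix{
\Q_p \otimes_{\Z} \mathcal{O}_M^{\times} \ar[r] \ar[d]_{\Q_p \otimes \lambda_p^M} & \Q_p \otimes_{\Z} \mathcal{O}_K^{\times} \ar[d]^{\Q_p \otimes \lambda_p^K} \\
\prod_{v \in \Sigma_p(M)} \Q_p \otimes_{\Z_p} U_{M_v}^{1} \ar[r] & \prod_{w \in \Sigma_p(K)} \Q_p \otimes_{\Z_p} U_{K_w}^{1}
}
\]
and check injectivity of both horizontal arrows. For the top, fundamental units of $M$ remain multiplicatively independent in $\mathcal{O}_K^{\times}$, so $\Q \otimes_{\Z} \mathcal{O}_M^{\times} \hookrightarrow \Q \otimes_{\Z} \mathcal{O}_K^{\times}$, and flatness of $\Q_p/\Q$ preserves injectivity. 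For the bottom, it suffices to use the diagonal embeddings $M_v \hookrightarrow \prod_{w \mid v} K_w$ for each $v \in \Sigma_p(M)$. A one-line diagram chase then transfers injectivity of the right vertical arrow, which is equivalent to $\Leo(K,p)$ by part (ii), to the left vertical arrow, which is equivalent to $\Leo(M,p)$ again by (ii).

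For (v), recall that by definition \cite[(10.3.4)]{MR2392026} the $p$-adic regulator $R_{F,p}$ is the determinant of a square matrix whose entries are $p$-adic logarithms of a system of fundamental units of $F$, evaluated at a suitable choice of local coordinates. Since $F$ is totally real, $\rank_{\Z} \mathcal{O}_F^{\times} = [F:\Q] - 1 = \dim_{\Q_p}\bigl(\prod_{v \in \Sigma_p(F)} F_v\bigr) - 1$, so after projecting away from the trace-like hyperplane the matrix representing $\Q_p \otimes_{\Z_p} \lambda_p$ is precisely this square matrix. Hence $R_{F,p} \neq 0$ is equivalent to injectivity of $\Q_p \otimes_{\Z_p} \lambda_p$, which by (ii) is equivalent to $\Leo(F,p)$. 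The main obstacle is (iii), for which any proof must ultimately invoke a deep $p$-adic transcendence result; every other part reduces to an essentially formal argument.
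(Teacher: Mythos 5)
Your proof is correct, and for parts (ii) and (iii) it runs along essentially the same lines as the paper's (the key observation for (ii), that $\lambda_{p}$ is injective on the torsion subgroup $\mu_{p^{\infty}}(K)$ of $\Z_{p}\otimes_{\Z}\mathcal{O}_{K}^{\times}$, is exactly what the paper uses, and (iii) is just the citation of Ax--Brumer in both cases). The genuine difference is in parts (iv) and (v): where the paper defers entirely to \cite[Theorem 10.3.11]{MR2392026} and \cite[p.\ 627]{MR2392026}, you give direct arguments. Your commutative square in (iv) is a clean self-contained replacement; the essential inputs are that $\Q$ (hence $\Q_{p}$) is flat over $\Z$ so the top arrow is injective, and the compatibility of $M\hookrightarrow K\hookrightarrow K_{w}$ with $M\hookrightarrow M_{v}\hookrightarrow K_{w}$ for $w\mid v$ so the square commutes. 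Your argument for (v) is also right, but the phrase ``projecting away from the trace-like hyperplane'' hides the one step that actually needs saying: $\Q_{p}\otimes\lambda_{p}$ is a $\Q_{p}$-linear map from an $(n-1)$-dimensional space into an $n$-dimensional one, and a priori its injectivity is not the same as the nonvanishing of a particular maximal minor. What rescues this is the relation $\sum_{\sigma}\log_{p}\sigma(\varepsilon)=\log_{p}\Norm_{F/\Q}(\varepsilon)=0$ for $\varepsilon\in\mathcal{O}_{F}^{\times}$, which forces the $n$ rows of the full matrix to sum to zero, so all $(n-1)\times(n-1)$ minors agree up to sign and the rank is $n-1$ precisely when $R_{F,p}\neq 0$. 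Finally, a small caveat on (i): you describe the two formulations as ``coinciding,'' but that coincidence is exactly the content of (i), not a triviality; the paper's formulation is that of \cite[(10.3.5)]{MR2392026} and the translation to injectivity of $\lambda_{p}$ relies on \cite[Theorem 10.3.6 (iii)]{MR2392026} together with identifying $\hat{U}_{\mathfrak{p}}$ with $U_{\mathfrak{p}}^{1}$ for $\mathfrak{p}\mid p$ and noting $\hat{U}_{\mathfrak{p}}$ is finite otherwise. Both you and the paper ultimately cite the reference here, but your write-up gives the misleading impression that nothing needs checking.
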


\begin{proof}
The map $\lambda_{p}$ is always injective on the $p$-torsion part of  $\Z_{p} \otimes_{\Z} \mathcal{O}_{K}^{\times}$
and thus the injectivity of $\lambda_{p}$ is equivalent to the injectivity of $\Q_{p} \otimes_{\Z_{p}} \lambda_{p}$, establishing assertion (ii).
Hence assertion (i) follows from \cite[Theorem 10.3.6 (iii)]{MR2392026} after observing that (using the notation of ibid.)
$\smash{\hat{U}_{\mathfrak{p}}} \cong U_{\mathfrak{p}}^{1}$ for $\smash{\mathfrak{p} \in \Sigma_{p}(K)}$
and $\smash{\hat{U}_{\mathfrak{p}}}$ is finite for $\smash{\mathfrak{p} \notin \Sigma_{p}(K)}$.
Ax \cite{MR0181630} reduced assertion (iii) to a $p$-adic version of Baker's theorem, which was  
proved by Brumer \cite{MR0220694} (also see \cite[Theorem 10.3.16]{MR2392026}).
For assertions (iv) and (v), see \cite[Theorem 10.3.11]{MR2392026} and \cite[p.\ 627]{MR2392026}, respectively.
\end{proof}

\subsection{The $p$-adic analytic class number formula}\label{subsec:p-adic-ACNF}
We follow the exposition of \cite[Chapter XI, \S 6.2]{MR2392026}, to which we refer the reader for further details and references.
Let $F$ be a totally real number field and let $\zeta_{F}(s)$ denote the Dedekind zeta function attached to $F$.
Then by the Siegel-Klingen theorem $\zeta_{F}(1-n) \in \Q$ for integers $n \geq 1$ and these values are non-zero when $n$ is even;
moreover, if $n>1$ is odd then $\zeta_{F}(1-n)=0$.
Furthermore, for each prime $p$ there exists a unique $p$-adic analytic function 
$\zeta_{F,p} : \Z_{p} \backslash \{ 1 \} \rightarrow \Q_{p}$ satisfying
\[
\zeta_{F,p}(1-n) = \zeta_{F}(1-n) \prod_{v \in \Sigma_{p}(F)} (1 - \Norm_{F/\Q}(v)^{n-1}) 
\]
for all $n>1$ with $d \mid n$ where $d=[F(\zeta_{2p}):F]$.
The function $\zeta_{F,p}(s)$ is called the $p$-adic zeta function attached to $F$ and has at most a simple pole at $s=1$.
Colmez proved the following result analogous to the usual analytic class number formula at $s=1$.

\begin{theorem}[{\cite[\S 5]{MR922806}}]\label{thm:p-adic-class-number-formula}
Let $p$ be a prime and let $F$ be a totally real number field. Then
\begin{equation*}\label{eq:p-adic-class-number-formula}
\lim_{s \rightarrow 1} (s-1) \zeta_{F,p}(s) = \frac{2^{[F:\Q]} h_{F} R_{F,p}}{w_{F}\sqrt{|d_{F}|}} \prod_{v \in \Sigma_{p}(F)} (1 - \Norm_{F/\Q}(v)^{-1}) 
\end{equation*}
where $h_{F}$, $R_{F,p}$ and $d_{F}$ are the class number, $p$-adic regulator and discriminant of $F$, respectively, 
and $w_{F}(=2)$ is the number of roots of unity contained in $F$.
\end{theorem}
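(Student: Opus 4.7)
The plan is to follow a measure-theoretic approach in the spirit of Deligne--Ribet and Cassou--Nogu\`es, computing the residue by reducing it to the total mass of a pseudo-measure and then to a $p$-adic regulator. First, I would realise $\zeta_{F,p}(s)$ as a $p$-adic Mellin integral. Let $F_{\infty} := F(\zeta_{p^{\infty}})$, let $\Gamma_{F} := \Gal(F_{\infty}/F)$, and let $\kappa : \Gamma_{F} \rightarrow \Z_{p}^{\times}$ denote the cyclotomic character with $\langle \cdot \rangle$ its projection onto the pro-$p$ part $1+q\Z_{p}$ (where $q=p$ or $4$). By the construction of Deligne--Ribet and Cassou--Nogu\`es one produces a pseudo-measure $\mu_{F}$ on $\Gamma_{F}$ (i.e.\ a measure after multiplication by $\gamma-1$ for a suitable $\gamma$), characterised by the interpolation
\[
\int_{\Gamma_{F}} \kappa(\sigma)^{n}\, d\mu_{F}(\sigma) = \zeta_{F}(1-n)\prod_{v\in\Sigma_{p}(F)}(1-\Norm_{F/\Q}(v)^{n-1})
\]
for $n \geq 1$ with $d\mid n$. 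The unicity of $p$-adic analytic interpolation then forces
\[
\zeta_{F,p}(s) = \frac{1}{\kappa(\gamma_{0})^{1-s}-1}\int_{\Gamma_{F}} \langle\kappa(\sigma)\rangle^{1-s}\,d\mu_{F}(\sigma)
\]
for any topological generator $\gamma_{0}$ of the pro-$p$ part of $\Gamma_{F}$, exhibiting explicitly the simple pole at $s=1$.

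Second, I would extract the residue. Expanding $\kappa(\gamma_{0})^{1-s}-1 = -(s-1)\log_{p}\kappa(\gamma_{0})+O((s-1)^{2})$ and noting that $\langle\kappa(\sigma)\rangle^{1-s}\to 1$ uniformly as $s\to 1$, one obtains
\[
\lim_{s\rightarrow 1}(s-1)\zeta_{F,p}(s) \;=\; -\frac{\mu_{F}(\Gamma_{F})}{\log_{p}\kappa(\gamma_{0})}.
\]
Thus the problem collapses to identifying the total mass $\mu_{F}(\Gamma_{F})$ with the right-hand side of the asserted formula.

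Third, I would compute $\mu_{F}(\Gamma_{F})$ by means of Shintani's cone decomposition of the partial zeta functions of $F$, which expresses $\mu_{F}$ via $p$-adic distributions associated to cones in $F\otimes_{\Q}\R$. Integrating $1$ against this description rewrites $\mu_{F}(\Gamma_{F})$ as a $\Z_{p}$-linear combination of $p$-adic logarithms of specific $S$-units of $F$, where $S=\Sigma_{\infty}(F)\cup\Sigma_{p}(F)$. Applying global class field theory, the contribution from the $p$-units modulo $\mathcal{O}_{F}^{\times}$ supplies the class number $h_{F}$ together with the Euler product $\prod_{v\in\Sigma_{p}(F)}(1-\Norm_{F/\Q}(v)^{-1})$, while the residual contribution on $\mathcal{O}_{F}^{\times}$ is precisely the determinant computing $R_{F,p}$ under $\Q_{p}\otimes_{\Z_{p}}\lambda_{p}$. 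The archimedean constants $2^{[F:\Q]}/(w_{F}\sqrt{|d_{F}|})$ emerge as the normalisation constants already forced on $\zeta_{F}(0)$ by Siegel's theorem together with the functional equation of $\zeta_{F}(s)$.

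The hardest step by far is the third: the identification of $\mu_{F}(\Gamma_{F})$ with the $p$-adic regulator is essentially a $p$-adic Kronecker limit formula, and for general totally real $F$ it is not accessible by a direct Brauer-induction reduction to the Kubota--Leopoldt case, because the relevant characters of $\Gal(F/\Q)$ need not factor through abelian subquotients. Colmez's treatment in \cite{MR922806} handles this by a careful $p$-adic integration on Shintani cones combined with an explicit reciprocity-type computation of $\log_{p}$ against $p$-units; any alternative approach would need to supply a comparable ingredient, which is the genuine analytic core of the theorem.
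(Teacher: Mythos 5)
The paper itself provides no proof of this theorem: it is stated with a citation to Colmez's paper in Inventiones and used as a black box, so there is no internal argument against which to compare your sketch. I will therefore assess the proposal on its own terms.

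Your framework --- realise $\zeta_{F,p}$ as a Mellin transform of the Deligne--Ribet/Cassou-Nogu\`es pseudo-measure, extract the residue at $s=1$ as a renormalised total mass, and identify that mass via Shintani cones --- is indeed the correct skeleton of Colmez's argument, and you correctly locate the analytic core in your third step. The difficulty is that you have left essentially the entire content of the theorem inside that third step. Beyond the gap you acknowledge, a few points in the sketch would not survive an attempt to fill them in as stated. First, for a pseudo-measure the quantity ``$\mu_F(\Gamma_F)$'' is not directly defined; you must pass through the localisation $\Z_p\llbracket\Gamma_F\rrbracket[(\gamma_0-1)^{-1}]$ and track the normalising factor $\log_p\kappa(\gamma_0)$ carefully --- this factor does not appear in the final formula, so its cancellation against a like factor arising from the Shintani-cone side is itself part of the proof, not a free by-product. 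Second, the claim that the archimedean constants $2^{[F:\Q]}/(w_F\sqrt{|d_F|})$ are ``forced by Siegel's theorem and the functional equation'' has no $p$-adic counterpart: there is no functional equation for $\zeta_{F,p}$, and these constants have to be extracted from the cone decomposition itself, where they are entangled with the regulator term rather than cleanly separated out. Third, identifying the unit contribution with the determinant $R_{F,p}$ requires an explicit local computation of $\log_p$ against $p$-units (a $p$-adic analogue of a Kronecker limit formula); this is not a formal consequence of global class field theory and is where the genuine work lies. In short, your plan correctly points at Colmez's proof but does not reconstruct it, and the specific shortcuts you suggest (functional equation, formal class field theory) would not carry through.
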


\begin{remark} \label{rmk:sign-of-quotient}
The quantities $R_{F,p}$ and $\sqrt{|d_{F}|}$ are only defined up to sign.
However, their quotient can be well-defined as follows (we recall the explanation of \cite[\S 2.3]{MR0337898}).
Let $\log_{p} : \C_{p}^{\times} \rightarrow \C_{p}$ denote the $p$-adic logarithm.
Let $n=[F:\Q]$ and let $\sigma_{1}, \dots, \sigma_{n}$ be the embeddings of $F$ into $\R$.
Let $\varepsilon_{1}, \dots, \varepsilon_{n-1}$ be a system of fundamental units in $\mathcal{O}_{F}^{\times}$ and let 
$\omega_{1}, \dots, \omega_{n}$ be a $\Z$-basis of $\mathcal{O}_{F}$. 
The usual regulator $R_{F,\infty} := \det \left( \log |\sigma_{i}(\varepsilon_{j})| \right)_{1 \leq i,j \leq n-1}$, the $p$-adic regulator
$R_{F,p} := \det ( \log_{p} \iota \circ \sigma_{i}(\varepsilon_{j}))_{1 \leq i,j \leq n-1}$ and
$\sqrt{|d_{F}|} := \det \left(\sigma_{i}(\omega_{k}) \right)_{1 \leq i,k \leq n}$ are all well-defined up to sign. 
Moreover, one can make choices of the tuples $(\sigma_{i}),(\varepsilon_{j})$ and $(\omega_{k})$
such that the quotient $R_{F,\infty} / \sqrt{|d_{F}|}$ is positive. 
Note that it is this quotient that appears in the usual analytic class number formula at $s=1$.
With the same choices, for any field isomorphism $\iota : \C \cong \C_{p}$ the quotient
\[
\frac{R_{F,p}}{\sqrt{|d_{F}|}} 
:= \frac{\det \left( \log_{p} \iota \circ \sigma_{i}(\varepsilon_{j}) \right)_{1 \leq i,j \leq n-1}}{\det \left(\iota \circ \sigma_{i}(\omega_{k}) \right)_{1 \leq i,k \leq n}}
= \frac{\det \left( \log_{p} \iota \circ \sigma_{i}(\varepsilon_{j}) \right)_{1 \leq i,j \leq n-1}}{\iota (\sqrt{|d_{F}|})}
\]
is well-defined and does not depend on the choice of $\iota$.
\end{remark}

\begin{corollary}\label{cor:simple-pole-iff-leopoldt}
$\zeta_{F,p}(s)$ has a (simple) pole at $s=1$ if and only if $\Leo(F,p)$ holds.
\end{corollary}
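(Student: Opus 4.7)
The corollary should follow almost immediately from Colmez's formula (Theorem 4.2) combined with Theorem 3.1(v), once one checks that every factor on the right-hand side of Colmez's formula besides $R_{F,p}$ is nonzero.

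The plan is as follows. Since $\zeta_{F,p}(s)$ is stated above to have at most a simple pole at $s=1$, the function $(s-1)\zeta_{F,p}(s)$ extends to an analytic function at $s=1$, and $\zeta_{F,p}$ has a (necessarily simple) pole at $s=1$ if and only if the value $\lim_{s \to 1} (s-1) \zeta_{F,p}(s)$ is nonzero. By Theorem 4.2, this limit equals
\[
\frac{2^{[F:\Q]} h_{F} R_{F,p}}{w_{F} \sqrt{|d_{F}|}} \prod_{v \in \Sigma_{p}(F)} \left(1 - \Norm_{F/\Q}(v)^{-1}\right).
\]

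Next I would verify that every factor other than $R_{F,p}$ is nonzero. The quantities $2^{[F:\Q]}$, $h_{F}$, $w_{F}=2$ are positive integers, and $\sqrt{|d_{F}|}$ is nonzero (and indeed the quotient $R_{F,p}/\sqrt{|d_{F}|}$ is well-defined as explained in Remark 4.3). For each $v \in \Sigma_{p}(F)$ we have $\Norm_{F/\Q}(v) = p^{f_{v}}$ with $f_{v} \geq 1$, so $\Norm_{F/\Q}(v)^{-1}$ has negative $p$-adic valuation and hence $1 - \Norm_{F/\Q}(v)^{-1}$ is a nonzero element of $\Q_{p}$ (with $p$-adic valuation $-f_{v}$). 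Consequently, the product of Euler factors is nonzero, and so the entire right-hand side is nonzero if and only if $R_{F,p} \neq 0$.

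Finally, invoking Theorem 3.1(v), which states that $R_{F,p} \neq 0$ if and only if $\Leo(F,p)$ holds, I would conclude that $\zeta_{F,p}$ has a simple pole at $s=1$ precisely when $\Leo(F,p)$ holds. There is no real obstacle here: the only point requiring any care is the observation that the Euler factors at primes above $p$ cannot vanish (they lie outside $\Z_p$ rather than inside it), and that the sign ambiguity in $R_{F,p}$ and $\sqrt{|d_{F}|}$ is immaterial for the question of vanishing.
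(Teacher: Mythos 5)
Your proof is correct and matches the approach the paper intends: the corollary is stated without explicit proof immediately following Colmez's formula (Theorem \ref{thm:p-adic-class-number-formula}) and Remark \ref{rmk:sign-of-quotient}, and is indeed just the observation that in the formula every factor other than $R_{F,p}$ is a nonzero $p$-adic number, combined with Theorem \ref{thm:leo-results}(v). Your care over the Euler factors $1 - \Norm_{F/\Q}(v)^{-1}$ (nonzero because they have strictly negative $p$-adic valuation) and over the sign ambiguity being irrelevant to vanishing is exactly what is needed.
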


\subsection{A certain `comparison period'}\label{subsec:comparison-period}
Let $E/F$ be a finite Galois extension of totally real number fields and let $G=\Gal(E/F)$.
Recall from \eqref{eqn:mu-infty} that
\[
\mu_{\infty}: \C \otimes_{\Z} \log_{\infty}(\mathcal{O}_{E}^{\times}) \xrightarrow{\, \sim \,} \C \otimes_{\Q} E^{0}
\]
is a canonical isomorphism of $\C[G]$-modules. 
Let $\exp_{\infty}': \log_{\infty}(\mathcal{O}_{E}^{\times}) \rightarrow \mathcal{O}_{E}^{\times}$ 
denote the map induced by $\exp_{\infty}: E_{\infty} \rightarrow E_{\infty}^{\times}$ and the inverse of the restriction of the diagonal embedding 
$\Delta_{\infty}:E^{\times} \rightarrow E_{\infty}^{\times}$ to $\mathcal{O}_{E}^{\times}$. Note that $\exp'_{\infty}$ is injective since $E$ is totally real.

Now let $p$ be a prime and let $\Sigma_{p}(E)$ denote the set of places of $E$ above $p$.
For each $w \in \Sigma_{p}(E)$ let $\log_{w}:U^{1}_{E_{w}} \rightarrow E_{w}$ be 
the restriction of the $p$-adic logarithm $\log_{p}$ and recall that $\ker(\log_{w})$ consists of the $p$-power roots of unity in $E_{w}$
(see \cite[Proposition 5.6]{MR1421575}).
The composite $\Z_{p}[G]$-module homomorphism
\[
\Z_{p} \otimes_{\Z} \log_{\infty}(\mathcal{O}_{E}^{\times}) 
\xrightarrow{{\exp}_{\infty}'} \Z_{p} \otimes_{\Z} \mathcal{O}_{E}^{\times} \xrightarrow{\lambda_{p}}
\prod_{w \in \Sigma_{p}(E)} U^{1}_{E_{w}} \xrightarrow{\prod_{w} \log_{w}} \prod_{w \in \Sigma_{p}(E)} E_{w} \cong \Q_{p} \otimes_{\Q} E
\]
factors through the inclusion $\Q_{p} \otimes_{\Q} E^{0} \subset \Q_{p} \otimes_{\Q} E$
and hence induces a homomorphism of $\C_{p}[G]$-modules 
\[
\mu_{p}: \C_{p} \otimes_{\Z} \log_{\infty} (\mathcal{O}_{E}^{\times}) \longrightarrow \C_{p} \otimes_{\Q} E^{0}.
\]
Observe that
\begin{equation}\label{eq:leo-equiv-mu-p-iso}
\Leo(E,p) \textrm{ holds} \Longleftrightarrow \Q_{p} \otimes_{\Z_{p}} \lambda_{p} \textrm{ is injective} \Longleftrightarrow \mu_{p} \textrm{ is an isomorphism},
\end{equation}
where the first equivalence is Theorem \ref{thm:leo-results} (ii) and the second equivalence follows from the definition of $\mu_{p}$, 
the injectivity of $\exp'_{\infty}$, the fact that $\ker(\log_{w})$ is torsion for each $w$, and that both the domain and codomain of $\mu_{p}$
are of $\C_{p}$-dimension $[E:\Q]-1$.

\begin{definition}
Let $j: \C \cong \C_{p}$ be a field isomorphism and let $\rho \in R_{\C_{p}}^{+}(G)$. 
We define the comparison period attached to $\rho$ and $j$ to be
\[ 
\Omega_{j}(\rho) := 
{\det}_{\C_{p}}(\mu_{p} \circ (\C_{p} \otimes_{\C,j} \mu_{\infty})^{-1})^{\rho} \in \C_{p}.
\] 
\end{definition}

We record some basic properties of $\Omega_{j}(-)$.

\begin{lemma}\label{lem:properties-of-omega}
Let $H,N$ be subgroups of $G$ with $N$ normal in $G$.
\begin{enumerate}
\item 
Let $\rho_{1}, \rho_{2} \in  R_{\C_{p}}^{+}(G)$. Then $\Omega_{j}(\rho_{1} + \rho_{2}) = \Omega_{j}(\rho_{1}) \Omega_{j}(\rho_{2})$.
\item
Let $\rho \in R_{\C_{p}}^{+}(H)$. Then $\Omega_{j}(\ind_{H}^{G} \rho) = \Omega_{j}(\rho)$.
\item
Let $\rho \in R_{\C_{p}}^{+}(G / N)$. Then $\Omega_{j}(\infl_{G / N}^{G} \rho) = \Omega_{j}(\rho)$.
\end{enumerate}
\end{lemma}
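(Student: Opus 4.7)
The plan is to realise each $\Omega_{j}(\rho)$ as the determinant of the $\rho$-part of a single fixed endomorphism, and then read off (i)--(iii) from Lemma \ref{lem:properties-of-rho-parts-functor}. Set
\[
M := \C_{p} \otimes_{\Q} E^{0} \quad \text{and} \quad \alpha := \mu_{p} \circ (\C_{p} \otimes_{\C,j} \mu_{\infty})^{-1} \in \End_{\C_{p}[G]}(M),
\]
so that by definition $\Omega_{j}(\rho) = {\det}_{\C_{p}}(\alpha^{\rho})$. With this setup, claim (i) is immediate from Lemma \ref{lem:properties-of-rho-parts-functor}(i).

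For claim (ii), the crucial observation is that $E/E^{H}$ is itself a finite Galois extension of totally real number fields with Galois group $H$, and that the ingredients used to define $\mu_{\infty}$ and $\mu_{p}$ --- namely $E^{0}$, $\log_{\infty}(\mathcal{O}_{E}^{\times})$, $\exp_{\infty}'$, $\lambda_{p}$, and the local logarithms $\log_{w}$ for $w \in \Sigma_{p}(E)$ --- depend only on $E$ (and on $p$), not on the base field. Restricting the $G$-action to $H$ therefore yields precisely the analogous objects for the extension $E/E^{H}$, so the endomorphism used to compute $\Omega_{j}(\rho)$ on the $H$-side is exactly $\alpha$ viewed as an element of $\End_{\C_{p}[H]}(M|_{H})$. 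Lemma \ref{lem:properties-of-rho-parts-functor}(ii) then gives
\[
\Omega_{j}(\ind_{H}^{G}\rho) = {\det}_{\C_{p}}(\alpha^{\ind_{H}^{G}\rho}) = {\det}_{\C_{p}}((\alpha|_{H})^{\rho}) = \Omega_{j}(\rho).
\]

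For claim (iii), $E^{N}/F$ is again a finite Galois extension of totally real number fields with Galois group $G/N$, and the strategy is to show that $\alpha|_{M^{N}}$ coincides, under natural identifications, with the corresponding endomorphism $\alpha^{E^{N}/F}$ used to define $\Omega_{j}$ on the $G/N$-side; claim (iii) then follows from Lemma \ref{lem:properties-of-rho-parts-functor}(iii). The identifications to verify are the standard Galois-descent compatibilities: $M^{N} = \C_{p} \otimes_{\Q} (E^{N})^{0}$ (using $\Tr_{E/\Q}|_{E^{N}} = |N| \cdot \Tr_{E^{N}/\Q}$); $\Q_{p} \otimes_{\Z}\log_{\infty}(\mathcal{O}_{E}^{\times})^{N} = \Q_{p}\otimes_{\Z}\log_{\infty}(\mathcal{O}_{E^{N}}^{\times})$; and the identification
\[
\Bigl(\textstyle\prod_{w \in \Sigma_{p}(E)} U^{1}_{E_{w}}\Bigr)^{\!N} \otimes_{\Z_{p}}\Q_{p} \;\cong\; \Q_{p} \otimes_{\Z_{p}}\textstyle\prod_{v \in \Sigma_{p}(E^{N})} U^{1}_{E^{N}_{v}},
\]
where on the left $N$ acts simultaneously by permuting places of $E$ above $p$ and by acting on each completion; the corresponding compatibility then propagates through the local logarithms.

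The routine parts are (i) and (ii); the main obstacle is the naturality check in (iii), which must be carried out step-by-step through the composition defining $\mu_{p}$. None of these verifications is difficult, but the one for the local factors requires a careful analysis of how $N$ interchanges the places of $E$ lying above a given place of $E^{N}$ and how the product of local units descends correspondingly.
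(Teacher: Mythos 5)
Your proof is correct and takes essentially the same approach as the paper, which simply deduces each part of the lemma from the corresponding part of Lemma \ref{lem:properties-of-rho-parts-functor}; you make explicit the identifications (that the maps $\mu_{\infty}$ and $\mu_{p}$ for $E/E^{H}$ coincide with those for $E/F$ upon restricting the $G$-action, and the Galois-descent compatibilities needed to identify $\alpha|_{M^{N}}$ with the endomorphism defining $\Omega_{j}$ for $E^{N}/F$) that the paper leaves implicit in its one-line proof.
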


\begin{proof}
Each part follows from the corresponding part of Lemma \ref{lem:properties-of-rho-parts-functor}.
\end{proof}

\begin{remark}\label{rmk:vanishing-of-mu-p-independent-of-j}
Since $\mu_{\infty}$ is an isomorphism, for any two choices of field isomorphism $j,j': \C \cong \C_{p}$ we have that 
$\Omega_{j}(\rho)=0$ if and only if $\Omega_{j'}(\rho)=0$.
\end{remark}

\begin{remark}\label{rmk:rho-parts-of-Leo}
For any fixed choice of field isomorphism $j:\C \cong \C_{p}$ we have 
\begin{eqnarray*}
\Leo(E,p) \textrm{ holds} 
&\Longleftrightarrow& \mu_{p} \textrm{ is an isomorphism} \\
&\Longleftrightarrow& \Omega_{j}(\rho) \neq 0 \quad \forall \rho \in \Irr_{\C_{p}}(G)\\
&\Longleftrightarrow& \Omega_{j}(\rho) \neq 0 \quad \forall \rho \in R_{\C_{p}}^{+}(G),
\end{eqnarray*}
where the first equivalence is \eqref{eq:leo-equiv-mu-p-iso} and the last follows from Lemma \ref{lem:properties-of-omega} (i).
Thus the non-vanishing of $\Omega_{j}(\rho)$ can be thought of as the `$\rho$-part' of $\Leo(E,p)$.
Moreover, if $\Omega_{j}(\rho) \neq 0$ then we may set $\Omega_{j}(-\rho) := \Omega_{j}(\rho)^{-1}$
and so if we assume $\Leo(E,p)$ then Lemma \ref{lem:properties-of-omega} (i) shows that the definition of $\Omega_{j}(\rho)$ 
naturally extends to any virtual character $\rho \in R_{\C_{p}}(G)$.
\end{remark}

\subsection{$p$-adic Artin $L$-functions}\label{subsec:p-adic-Artin-L-functions}
Let $E/F$ be a finite Galois extension of totally real number fields and let $G=\Gal(E/F)$. Let $p$ be a prime
and let $\Sigma$ be a finite set of places of $F$ containing $\Sigma_{p}(F) \cup \Sigma_{\infty}(F)$.
For each character $\rho \in R_{\C_{p}}(G)$ the $\Sigma$-truncated $p$-adic Artin $L$-function 
attached to $\rho$ is the unique $p$-adic meromorphic
function $L_{p,\Sigma}(s, \rho) : \Z_{p} \rightarrow \C_{p}$ with the property that for each strictly negative 
integer $n$ and each field isomorphism $j: \C \cong \C_{p}$ we have
\[
L_{p,\Sigma}(n, \rho) = j \left( L_{\Sigma}(n, (\rho \otimes \omega^{n-1})^{j^{-1}}) \right),
\]
where $\omega: \Gal(\overline{F}/F) \rightarrow \Z_{p}^{\times}$ is the Teichm\"uller character.
(By a result of Siegel \cite{MR0285488} the right-hand side does not depend on the choice of $j$.)
These functions satisfy the same properties with respect to induction, inflation and addition of characters as complex Artin $L$-functions. 
In the case that $\rho$ is linear, $L_{p,\Sigma}(s, \rho)$ was constructed independently by 
Deligne and Ribet \cite{MR579702}, Barsky \cite{MR525346} and Cassou-Nogu\'es \cite{MR524276}.
Greenberg \cite{MR692344} then extended the construction to the general case using Brauer induction.
Note that when $\Sigma = \Sigma_{p}(F) \cup \Sigma_{\infty}(F)$ we have $L_{p,\Sigma}(s,\mathbbm{1}_{G}) = \zeta_{F,p}(s)$
(see \S \ref{subsec:p-adic-ACNF}).
If we assume $\Leo(E,p)$ and write $L_{p,\Sigma}^{*}(1,\rho)$ for the leading term at $s=1$ of $L_{p,\Sigma}(s,\rho)$,
then in analogy with \eqref{eq:Artin-L-order-of-vanishing} one can show that
\begin{equation}\label{eq:p-adic-L-fnc-leading-term}
L_{p,\Sigma}^{*}(1,\rho) = \lim_{s \rightarrow 1} (s-1)^{\langle \mathbbm{1}_{G}, \rho \rangle_{G}} \cdot L_{p,\Sigma}(s,\rho). 
\end{equation}

\subsection{Statement of the $p$-adic Stark conjecture at $s=1$}\label{subsec:formulation-p-adic-Stark}
This conjecture is discussed by Tate in \cite[Chapitre VI, \S 5]{MR782485} where it is attributed to Serre; in fact, it appears to 
be Tate's elaboration of a remark in \cite{MR0506177}.
Solomon \cite[\S 3.3]{MR1906480} noted some slight imprecisions in Tate's discussion and also gave an alternative formulation in the case that 
$G$ is abelian.
Burns and Venjakob clarified Tate's formulation in the general case 
in \cite[\S 5.2]{MR2290587} and gave a simplified presentation in \cite[\S 7.1]{MR2749572}.  
We shall use the following slight variant of the latter version as it will be the most useful for applications.

\begin{conj}[The $p$-adic Stark conjecture at $s=1$]\label{conj:p-adic-Stark-at-s=1}
Let $E/F$ be a finite Galois extension of totally real number fields and let $G=\Gal(E/F)$.
Let $p$ be a prime and let $\Sigma$ be a finite set of places of $F$ containing $\Sigma_{p}(F) \cup \Sigma_{\infty}(F)$.
Let $\rho \in R_{\C_{p}}^{+}(G)$. Then for every choice of field isomorphism $j: \C \cong \C_{p}$ we have 
\begin{equation}\label{eq:p-adic-Stark-at-s=1-eq}
L_{p, \Sigma}^{*}(1,\rho) = \Omega_{j}(\rho) \cdot j \left( L_{\Sigma}^{*}(1, \rho^{j^{-1}}) \right). 
\end{equation}
\end{conj}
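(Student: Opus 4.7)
The plan is to reduce Conjecture~\ref{conj:p-adic-Stark-at-s=1} via Brauer induction to a case that can be handled directly from Colmez's $p$-adic analytic class number formula (Theorem~\ref{thm:p-adic-class-number-formula}) together with known identities for $p$-adic $L$-functions of abelian characters.

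First I would verify that both sides of \eqref{eq:p-adic-Stark-at-s=1-eq} transform in the same way under addition, induction from a subgroup $H$ (with $E^H$ replacing $F$), and inflation through a normal subgroup. For the period $\Omega_j$ this is Lemma~\ref{lem:properties-of-omega}. For the complex leading term $L_\Sigma^*(1,\rho^{j^{-1}})$ it follows from the classical functorial properties of Artin $L$-functions combined with the fact that the orders of vanishing at $s=1$ all equal $\langle \mathbbm{1},\rho\rangle_G$ and are therefore themselves additive, induction-invariant, and inflation-invariant. For the $p$-adic leading term $L_{p,\Sigma}^*(1,\rho)$ the same properties follow from the Brauer-induction definition of $L_{p,\Sigma}(s,\rho)$ recalled in \S\ref{subsec:p-adic-Artin-L-functions} and from formula \eqref{eq:p-adic-L-fnc-leading-term}; the latter requires $\Leo(E,p)$, which descends to intermediate fields by Theorem~\ref{thm:leo-results}(iv).

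Brauer's induction theorem then lets me write $\rho = \sum_i n_i \ind_{H_i}^G \chi_i$ with each $\chi_i$ a linear character of a subgroup $H_i$, reducing the conjecture for $\rho$ to the conjecture for each $\chi_i$ viewed as a character of $\Gal(E/E^{H_i})$. Inflating through $\ker(\chi_i)$ further reduces to the case where $G$ is abelian and $\rho$ is linear. For $\rho = \mathbbm{1}_G$ the two $L$-functions become $\zeta_{F,\Sigma}$ and $\zeta_{F,p}$ up to explicit Euler factors at $\Sigma \setminus (\Sigma_p(F) \cup \Sigma_\infty(F))$, the period $\Omega_j(\mathbbm{1}_G)$ is precisely the ratio of the $p$-adic regulator to the archimedean regulator described in Remark~\ref{rmk:sign-of-quotient}, and \eqref{eq:p-adic-Stark-at-s=1-eq} becomes the combination of the usual analytic class number formula with Colmez's formula, valid under $\Leo(F,p)$. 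For a non-trivial linear character $\rho$ I would combine the Deligne--Ribet/Cassou-Nogu\`es construction of $L_{p,\Sigma}(s,\rho)$ as a pseudo-measure with an explicit description of its value at $s=1$ in the style of Ritter--Weiss: both $L_{p,\Sigma}(1,\rho)$ and $j(L_\Sigma(1,\rho^{j^{-1}}))$ should admit expressions involving (respectively $p$-adic and complex) logarithms of units in the fixed field $E^{\ker\rho}$, and matching them through $\mu_p \circ (\C_p \otimes_{\C,j} \mu_\infty)^{-1}$ would produce exactly the factor $\Omega_j(\rho)$.

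The hard part will be the last step: producing an explicit formula for $L_{p,\Sigma}(1,\rho)$ on the non-trivial linear characters that is compatible with the regulator packaging encoded by $\Omega_j$. Over $F=\Q$ this is accessible through Kubota--Leopoldt $L$-functions and $p$-adic logarithms of cyclotomic units, which is presumably why the paper's unconditional result is limited to $E/\Q$ abelian. Over a general totally real base one enters genuine Gross--Stark territory, and no direct approach is known. A secondary difficulty is tracking the dependence on Leopoldt's conjecture throughout the Brauer-induction reduction, since by Remark~\ref{rmk:rho-parts-of-Leo} the period $\Omega_j(\rho)$ can vanish in its absence and then \eqref{eq:p-adic-Stark-at-s=1-eq} degenerates; consequently each application of induction or inflation must be accompanied by a verification that $\Leo$ holds for the relevant intermediate field.
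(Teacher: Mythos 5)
This statement is a conjecture, not a theorem; the paper does not prove it in general, and no proof is currently possible by any known method. Your proposal is therefore not a proof, but it is a largely accurate survey of the paper's reduction machinery and of the special cases that the paper does settle, so the correct comparison is with those results rather than with a nonexistent proof of the full conjecture.

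Your description of functoriality under addition, induction and inflation matches Remark~\ref{rmk:invariance-of-p-adic-Stark}; the closure under $\Z$-linear combinations matches Remark~\ref{rmk:p-adic-Stark-closed-under-Z-linear-combs}; and the Brauer reduction to (cyclic) sub-extensions matches Remark~\ref{rmk:p-adic-abelian-subextens-reduction}. The trivial-character case via Colmez's formula and the analytic class number formula is precisely Proposition~\ref{prop:p-Stark-for-trivial-character}, and your sketch for non-trivial linear characters over $\Q$ --- Kubota--Leopoldt $L$-values packaged against $p$-adic logarithms of cyclotomic units, compared through $\mu_p \circ (\C_p\otimes_{\C,j}\mu_\infty)^{-1}$ --- is exactly the content and method of Theorem~\ref{thm:p-adic-Stark-for-absolutely-abelian-characters}, down to the appeal to Ritter--Weiss. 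You also correctly identify the obstruction for a general totally real base.

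One point where you overcomplicate matters: the ``secondary difficulty'' you raise about needing to verify Leopoldt at each intermediate field is not actually present in the paper's argument. The key observation, recorded in Remark~\ref{rmk:p-adic-Stark-closed-under-Z-linear-combs}, is that equation~\eqref{eq:p-adic-Stark-at-s=1-eq} itself forces $\Omega_j(\rho)\neq 0$ (leading terms are non-zero by definition), so the set of characters for which the conjecture holds is automatically closed under $\Z$-linear combinations without any standing Leopoldt hypothesis; indeed Remark~\ref{rmk:p-adic-Stark-for-all-chars-implies-Leopoldt} deduces $\Leo(E,p)$ as a \emph{consequence} of the conjecture for all irreducible characters, not as a premise. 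Relatedly, the induction/inflation invariance in Remark~\ref{rmk:invariance-of-p-adic-Stark} is a statement about the functions $L_{p,\Sigma}(s,\cdot)$, $L_\Sigma(s,\cdot)$ and the periods $\Omega_j(\cdot)$ themselves, and requires neither \eqref{eq:p-adic-L-fnc-leading-term} nor Theorem~\ref{thm:leo-results}(iv); your appeal to both there is unnecessary.
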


\begin{remark}\label{rmk:indep-choice-of-sigma}
It is clear that $\Omega_{j}(\rho)$ does not depend on $\Sigma$. 
Thus, by considering Euler factors, it is straightforward to show that the
truth of Conjecture \ref{conj:p-adic-Stark-at-s=1} is independent of the choice of $\Sigma$.
\end{remark}

\begin{remark}\label{rmk:extend-p-adic-Stark-to-virtual-chars}
If $\Omega_{j}(\rho) \neq 0$ then by setting $\Omega_{j}(-\rho) := \Omega_{j}(\rho)^{-1}$ we see that 
the statement of Conjecture \ref{conj:p-adic-Stark-at-s=1} naturally extends to the virtual character $-\rho \in R_{\C_{p}}(G)$.
In particular, if we assume $\Leo(E,p)$ then Remark \ref{rmk:rho-parts-of-Leo} shows that the statement of Conjecture 
\ref{conj:p-adic-Stark-at-s=1} extends to all virtual characters $\rho \in R_{\C_{p}}(G)$.
\end{remark}

\begin{remark}\label{rmk:invariance-of-p-adic-Stark}
Since both complex and $p$-adic Artin $L$-functions satisfy properties analogous to those 
of $\Omega_{j}(-)$ given in Lemma \ref{lem:properties-of-omega}, 
the truth of Conjecture \ref{conj:p-adic-Stark-at-s=1} is invariant under induction and inflation;
moreover, if it holds for $\rho_{1}, \rho_{2} \in R_{\C_{p}}(G)$ then it holds for $\rho_{1}+\rho_{2}$.
\end{remark}

\begin{remark}\label{rmk:p-adic-Stark-closed-under-Z-linear-combs}
Since leading terms are non-zero by definition, the equality \eqref{eq:p-adic-Stark-at-s=1-eq} implies that $\Omega_{j}(\rho) \neq 0$.
Thus if \eqref{eq:p-adic-Stark-at-s=1-eq} holds then taking reciprocals shows that it holds with $\rho$ replaced by $-\rho$.  
By combining this observation with Remarks \ref{rmk:extend-p-adic-Stark-to-virtual-chars} and \ref{rmk:invariance-of-p-adic-Stark} 
we see that the subset of $R_{\C_{p}}(G)$ for which Conjecture \ref{conj:p-adic-Stark-at-s=1} holds is closed under $\Z$-linear 
combinations (this property will be crucial for several proofs later on).
In particular, if Conjecture \ref{conj:p-adic-Stark-at-s=1} holds for all $\rho \in \Irr_{\C_{p}}(G)$ then it holds for all $\rho \in R_{\C_{p}}(G)$.
\end{remark}

\begin{remark}\label{rmk:p-adic-Stark-for-all-chars-implies-Leopoldt}
Remark \ref{rmk:rho-parts-of-Leo} and the first sentence of Remark \ref{rmk:p-adic-Stark-closed-under-Z-linear-combs}
show that if Conjecture \ref{conj:p-adic-Stark-at-s=1} holds for all $\rho \in \Irr_{\C_{p}}(G)$ then $\Leo(E,p)$ holds.  
\end{remark}

\begin{remark}\label{rmk:p-adic-abelian-subextens-reduction}
Remarks \ref{rmk:invariance-of-p-adic-Stark} and \ref{rmk:p-adic-Stark-closed-under-Z-linear-combs}
together with Brauer's theorem on induced characters (see \cite[\S 15B]{MR632548}, for example)
show that the proof of Conjecture \ref{conj:p-adic-Stark-at-s=1} for $E/F$ and a fixed choice of $p$ reduces 
to certain cyclic sub-extensions of $E/F$.
\end{remark}

\subsection{The relation to Stark's conjecture at $s=1$}

\begin{theorem}\label{thm:p-stark-implies-stark}
Let $E/F$ be a finite Galois extension of totally real number fields and let $G=\Gal(E/F)$.
Let $p$ be a prime and let $\Sigma$ be a finite set of places of $F$ containing $\Sigma_{p}(F) \cup \Sigma_{\infty}(F)$.
Let $\rho \in R_{\C_{p}}^{+}(G)$.
If $\Omega_{j}(\rho) \neq 0$ for some (and hence every) choice of field isomorphism $j:\C \cong \C_{p}$ (in particular, this is the case if $\Leo(E,p)$ holds) 
then the following statements are equivalent.
\begin{enumerate}
\item $\Omega_{j}(\rho) \cdot j ( L_{\Sigma}^{*}(1, \rho^{j^{-1}}) )$ is independent of the choice of $j:\C \cong \C_{p}$.
\item Stark's conjecture at $s=1$ holds for $\rho^{j^{-1}} \in R_{\C}^{+}(G)$ and some (and hence every) choice of $j:\C \cong \C_{p}$.
\end{enumerate}
\end{theorem}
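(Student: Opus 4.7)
The plan is to reduce both conditions to a single, cleanly stated invariance property by exhibiting an explicit identity that relates the comparison period $\Omega_{j}(\rho)$ to the archimedean regulator $R_{1}(\rho^{j^{-1}},g)$ appearing in Stark's conjecture. Fix any $\Q[G]$-isomorphism $g: E^{0} \xrightarrow{\sim} \Q \otimes_{\Z} \log_{\infty}(\mathcal{O}_{E}^{\times})$, which exists by the remark preceding Conjecture \ref{conj:Stark-at-1}. Set $\chi := \rho^{j^{-1}} \in R_{\C}^{+}(G)$ and view the three $\C_{p}[G]$-linear endomorphisms
\[
\mu_{p} \circ (\C_{p} \otimes_{\C,j} \mu_{\infty})^{-1}, \qquad (\C_{p} \otimes_{\C,j} \mu_{\infty}) \circ (\C_{p} \otimes_{\Q} g), \qquad \mu_{p} \circ (\C_{p} \otimes_{\Q} g)
\]
on the appropriate $\C_{p}[G]$-modules. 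Since composition of endomorphisms induces composition of the $\chi$-part endomorphisms of Lemma \ref{lem:properties-of-rho-parts-functor}, the usual multiplicativity of determinants on $\chi$-parts together with the compatibility $j({\det}_{\C}(f)^{\chi}) = {\det}_{\C_{p}}(\C_{p} \otimes_{\C,j} f)^{\rho}$ (note $\chi^{j} = \rho$) yields the \textbf{key identity}
\[
\Omega_{j}(\rho) \cdot j\bigl(R_{1}(\rho^{j^{-1}},g)\bigr) \;=\; {\det}_{\C_{p}}\bigl(\mu_{p} \circ (\C_{p} \otimes_{\Q} g)\bigr)^{\rho}.
\]
The right-hand side depends only on $\rho$ and $g$, and in particular is \emph{independent of $j$}.

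Under the hypothesis $\Omega_{j}(\rho) \neq 0$, so that $j(R_{1}(\rho^{j^{-1}},g))$ is also nonzero, we may divide condition (i) by this identity to obtain: condition (i) holds if and only if the quantity
\[
j\!\left(\frac{L_{\Sigma}^{*}(1,\rho^{j^{-1}})}{R_{1}(\rho^{j^{-1}},g)}\right)
\]
is independent of the choice of field isomorphism $j : \C \cong \C_{p}$. Every such isomorphism is of the form $j' = j \circ \sigma^{-1}$ for a unique $\sigma \in \Aut(\C)$, in which case $\rho^{(j')^{-1}} = (\rho^{j^{-1}})^{\sigma} = \chi^{\sigma}$. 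Thus the invariance condition just derived is equivalent to the statement that, for every $\sigma \in \Aut(\C)$,
\[
j\!\left(\frac{L_{\Sigma}^{*}(1,\chi)}{R_{1}(\chi,g)}\right) = j\!\left( \sigma^{-1}\!\left(\frac{L_{\Sigma}^{*}(1,\chi^{\sigma})}{R_{1}(\chi^{\sigma},g)}\right)\right),
\]
and since $j$ is injective this reduces to the equality $\sigma(L_{\Sigma}^{*}(1,\chi)/R_{1}(\chi,g)) = L_{\Sigma}^{*}(1,\chi^{\sigma})/R_{1}(\chi^{\sigma},g)$ for all $\sigma \in \Aut(\C)$. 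Taking reciprocals, this is exactly the statement of Stark's conjecture at $s=1$ (Conjecture \ref{conj:Stark-at-1}) for the character $\chi = \rho^{j^{-1}}$.

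It remains to observe the harmless dependencies. The derived condition involves the fixed choice of $g$, but by Remark \ref{rmk:stark-1-independent-of-choices}(i) Stark's conjecture at $s=1$ is independent of this choice, so the quantifier over $g$ in Conjecture \ref{conj:Stark-at-1} is automatically accommodated. Similarly, Remark \ref{rmk:stark-1-independent-of-choices}(iii) gives that if Stark's conjecture at $s=1$ holds for $\rho^{j^{-1}}$ for one $j$ it holds for $\rho^{(j')^{-1}} = (\rho^{j^{-1}})^{\sigma}$ for every $j'$, justifying the \emph{some / every} phrasing in (ii). No step is a serious obstacle: the entire argument rests on the multiplicativity identity for $\chi$-part determinants, and the main point is the conceptual one that the right-hand side of the key identity is manifestly $j$-independent, converting the apparently $p$-adic condition (i) into the purely archimedean Galois-equivariance statement of Stark's conjecture.
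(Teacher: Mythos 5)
The proposal is correct and follows essentially the same route as the paper's own proof: it isolates the same key identity $\Omega_{j}(\rho)\cdot j(R_{1}(\rho^{j^{-1}},g)) = \det_{\C_{p}}(\mu_{p}\circ(\C_{p}\otimes_{\Q} g))^{\rho}$, observes that the right-hand side is manifestly independent of $j$, and then runs through the same substitution $j' = j\circ\sigma^{-1}$ (equivalently $j = j'\circ\sigma$, as in the paper) to reduce condition (i) to Conjecture \ref{conj:Stark-at-1} for $\chi = \rho^{j^{-1}}$. Your slightly different packaging — dividing by the $j$-independent quantity rather than forming the ratio over two choices $j,j'$ as in the paper — is only a cosmetic reorganisation, and the appeal to Remark \ref{rmk:stark-1-independent-of-choices} for the quantifiers matches the paper's reasoning.
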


\begin{remark}
That (ii) implies (i) in Theorem \ref{thm:p-stark-implies-stark} was already shown by Serre (see \cite[Chapitre VI, Th\'eor\`eme 5.2]{MR782485}); it is clear that this does not require
the hypothesis that $\Omega_{j}(\rho) \neq 0$ for some (and hence every) choice of $j$.
\end{remark}

\begin{proof}[Proof of Theorem \ref{thm:p-stark-implies-stark}]
The first and second occurrence of `and hence every' in statement of the theorem follow from Remark 
\ref{rmk:stark-1-independent-of-choices} and Remark \ref{rmk:vanishing-of-mu-p-independent-of-j} (iii), respectively.

Let $j,j': \C \cong \C_{p}$ be field isomorphisms and let $\chi := \rho^{j^{-1}}$.
Then $j = j' \circ \sigma$ for some $\sigma \in \Aut(\C)$ and so $\rho^{j'^{-1}} = \chi^{\sigma}$.
For every $\Q[G]$-isomorphism
$g: E^{0} \xrightarrow{\, \sim \,} \Q \otimes_{\Z} \log_{\infty}(\mathcal{O}_{E}^{\times})$ we have
\[
j \left( R_{1}(\chi, g) \right) = j \left({\det}_{\C} (\mu_{\infty} \circ (\C \otimes_{\Q} g))^{\chi}\right) = 
{\det}_{\C_{p}} ((\C_{p} \otimes _{\C,j}\mu_{\infty}) \circ (\C_{p} \otimes_{\Q} g))^{\rho},
\]
and thus
\[
\Omega_{j}(\rho) \cdot j \left( R_{1}(\chi, g) \right) = {\det}_{\C_{p}} (\mu_{p} \circ (\C_{p} \otimes_{\Q} g))^{\rho},
\]
which does not depend on $j$. 
In particular, $\Omega_{j}(\rho) \cdot j \left( R_{1}(\chi, g) \right) = \Omega_{j'}(\rho) \cdot j' \left( R_{1}(\chi^{\sigma}, g) \right)$.
Hence
\[
\frac{\Omega_{j}(\rho) \cdot j ( L_{\Sigma}^{*}(1, \rho^{j^{-1}}) )}{\Omega_{j'}(\rho) \cdot  j' ( L_{\Sigma}^{*}(1, \rho^{(j')^{-1}}) )}
= \frac{j'(R_{1}(\chi^{\sigma},g)) \cdot j(L_{\Sigma}^{\ast}(1, \chi))}{j(R_{1}(\chi, g)) \cdot j'(L_{\Sigma}^{\ast}(1, \chi^{\sigma}))}
= j' \left( \frac{\sigma(L_{\Sigma}^{\ast}(1, \chi)) \cdot R_{1}(\chi^{\sigma},g)}
{\sigma(R_{1}(\chi, g)) \cdot L_{\Sigma}^{\ast}(1, \chi^{\sigma})} \right),
\]
which is equal to $1$ if and only if Stark's conjecture at $s=1$ (Conjecture \ref{conj:Stark-at-1}) holds for the character $\chi$.
\end{proof}

The following result is perhaps counter-intuitive because its hypotheses depend on a fixed prime $p$, yet its conclusion does not.
It will be crucial for the proof of the prime-by-prime descent result of Theorem \ref{thm:descent-result}.

\begin{corollary}\label{cor:p-stark-implies-stark}
Let $E/F$ be a finite Galois extension of totally real number fields and let $G=\Gal(E/F)$.
Fix a prime $p$. If the $p$-adic Stark conjecture at $s=1$ holds for all $\rho \in R_{\C_{p}}^{+}(G)$ 
then Stark's conjecture at $s=1$ holds for all $\chi \in R_{\C}^{+}(G)$.  
\end{corollary}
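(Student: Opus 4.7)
The plan is to deduce this corollary directly from Theorem \ref{thm:p-stark-implies-stark}. The point is that the hypothesis automatically supplies both the non-vanishing of $\Omega_{j'}(\rho)$ and condition (i) of that theorem, for every $\rho \in R_{\C_{p}}^{+}(G)$.

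First, I would fix any field isomorphism $j:\C \cong \C_{p}$ and observe that $\chi \mapsto j \circ \chi$ gives a bijection $R_{\C}^{+}(G) \xrightarrow{\, \sim \,} R_{\C_{p}}^{+}(G)$ with inverse $\rho \mapsto \rho^{j^{-1}}$. It therefore suffices to verify Stark's conjecture at $s=1$ for $\rho^{j^{-1}}$ for each $\rho \in R_{\C_{p}}^{+}(G)$, since every complex character arises in this way.

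Next, I would fix such a $\rho$ and exploit the hypothesis. The $p$-adic Stark conjecture applied to $\rho$ asserts that for every field isomorphism $j':\C \cong \C_{p}$,
\[
L_{p,\Sigma}^{*}(1,\rho) = \Omega_{j'}(\rho) \cdot j'\left( L_{\Sigma}^{*}(1, \rho^{(j')^{-1}}) \right).
\]
The left-hand side is constructed purely from the $p$-adic Artin $L$-function $L_{p,\Sigma}(s,\rho)$ (see \S \ref{subsec:p-adic-Artin-L-functions}) and carries no dependence on the choice of $j'$; consequently the right-hand side is independent of $j'$, which is precisely condition (i) of Theorem \ref{thm:p-stark-implies-stark}. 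Moreover, since $L_{p,\Sigma}^{*}(1,\rho) \neq 0$ by definition of a leading term, the displayed equality forces $\Omega_{j'}(\rho) \neq 0$ for every $j'$; this is the running hypothesis of Theorem \ref{thm:p-stark-implies-stark} (alternatively, one may invoke Remarks \ref{rmk:p-adic-Stark-for-all-chars-implies-Leopoldt} and \ref{rmk:rho-parts-of-Leo} to derive the same non-vanishing via $\Leo(E,p)$).

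Finally, Theorem \ref{thm:p-stark-implies-stark} now delivers its condition (ii): Stark's conjecture at $s=1$ holds for $\rho^{(j')^{-1}}$ for some (and hence every) $j'$. Specialising to $j' = j$ yields Stark's conjecture for the original $\chi = \rho^{j^{-1}}$. Since the theorem does all the heavy lifting, I do not anticipate any real obstacle; the only subtle point is the conceptual one that the $p$-adic analytic quantity $L_{p,\Sigma}^{*}(1,\rho)$ is intrinsic to the $p$-adic side and involves no choice of embedding, so that comparing the $p$-adic Stark identity for two different $j'$ immediately yields the independence needed to invoke Theorem \ref{thm:p-stark-implies-stark}.
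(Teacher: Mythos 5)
Your proof is correct and follows essentially the same route as the paper's: you extract both the non-vanishing of $\Omega_{j'}(\rho)$ and condition (i) of Theorem~\ref{thm:p-stark-implies-stark} directly from the hypothesis, apply that theorem, and conclude via the observation that every $\chi \in R_{\C}^{+}(G)$ has the form $\rho^{j^{-1}}$. Your explanation that $L_{p,\Sigma}^{*}(1,\rho)$ is intrinsic to the $p$-adic side and hence $j'$-independent is simply an unpacking of what the paper cites via Remark~\ref{rmk:p-adic-Stark-closed-under-Z-linear-combs}.
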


\begin{proof}
Let $\rho \in R_{\C_{p}}^{+}(G)$.
The $p$-adic Stark conjecture at $s=1$ for $\rho$
implies that $\Omega_{j}(\rho) \neq 0$ for every choice of $j$ (see Remark \ref{rmk:p-adic-Stark-closed-under-Z-linear-combs})
and that condition (i) of Theorem \ref{thm:p-stark-implies-stark} holds.
Thus Stark's conjecture at $s=1$ holds for $\rho^{j^{-1}} \in R_{\C}^{+}(G)$ for every choice of $j$.
The desired result now follows from the observation that every $\chi \in R_{\C}^{+}(G)$ can be written in the form 
$\chi = \rho^{j^{-1}}$ for some $\rho \in R_{\C_{p}}^{+}(G)$ and some $j: \C \cong \C_{p}$.
\end{proof}

\subsection{An alternative description of the comparison period}\label{subsec:alt-description-of-omega}
Let $E/F$ be a finite Galois extension of totally real number fields and let $G=\Gal(E/F)$.
Let $p$ be a prime. 
For any $j:\C \cong \C_{p}$ and any $\rho \in R_{\C_{p}}^{+}(G)$, we shall define a period 
$\Psi_{j}(\rho)$ and show that $\Psi_{j}(\rho)=\Omega_{j}(\rho)$. 
This alternative description of $\Omega_{j}(\rho)$ will be used in \S \ref{sec:rational-valued-characters} on rational-valued characters and in 
\S \ref{sec:p-adic-Stark-abs-abelian} on absolutely abelian characters. However, we emphasise that the results of this section are valid for all characters.

If we view $\C \otimes_{\Q} E^{0}$ as a submodule of $\prod_{\Hom(E,\C)} \C$ via the isomorphism 
$\C \otimes_{\Q} E \cong \prod_{\Hom(E,\C)} \C$, then we can say that the usual Dirichlet map
\begin{eqnarray*}
	\varphi_{\infty}: \C \otimes_{\Z} \mathcal{O}_{E}^{\times} & \stackrel{\sim}{\longrightarrow} 
		& \C \otimes_{\Q} E^{0} \\
	z \otimes \epsilon & \mapsto &  \left(z\log |\sigma(\epsilon)|\right)_{\sigma \in \Hom(E,\C)}, 
\end{eqnarray*}
is a canonical isomorphism of $\C[G]$-modules.
We likewise define a $p$-adic Dirichlet map
\begin{eqnarray*}
	\varphi_{p}: \C_{p} \otimes_{\Z} \mathcal{O}_{E}^{\times} & \longrightarrow
		& \C_{p} \otimes_{\Q} E^{0} \\
	z \otimes \epsilon & \mapsto & \left(z\log_{p} (\tau(\epsilon))\right)_{\tau \in \Hom(E,\C_{p})},
\end{eqnarray*}
which is a homomorphism of $\C_{p}[G]$-modules. 
By Theorem \ref{thm:leo-results} (v) and the definitions of $\varphi_{p}$ and $R_{E,p}$ we have
\begin{equation*}\label{eq:leo-equiv-varphi-p-iso}
\Leo(E,p) \textrm{ holds} \Longleftrightarrow R_{E,p} \neq 0 \Longleftrightarrow \varphi_{p} \textrm{ is an isomorphism}. 
\end{equation*}

\begin{definition}
Let $j: \C \cong \C_{p}$ be a field isomorphism and let $\rho \in R_{\C_{p}}^{+}(G)$. 
We define
\[ 
\Psi_{j}(\rho) := 
{\det}_{\C_{p}}(\varphi_{p} \circ (\C_{p} \otimes_{\C,j} \varphi_{\infty})^{-1})^{\rho} \in \C_{p}.
\] 
\end{definition}

\begin{lemma}\label{lem:psi-equal-to-phi}
Let $j: \C \cong \C_{p}$ be a field isomorphism and let $\rho \in R_{\C_{p}}^{+}(G)$. Then
\[
\varphi_{p} \circ (\C_{p} \otimes_{\C, j} \varphi_{\infty})^{-1} = \mu_{p} \circ (\C_{p} \otimes_{\C, j} \mu_{\infty})^{-1},
\]
and so in particular $\Psi_{j}(\rho)=\Omega_{j}(\rho)$. 
\end{lemma}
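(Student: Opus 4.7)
The plan is to reduce the claimed equality of maps to the two parallel factorisation statements
\[
\mu_\infty = \varphi_\infty \circ (\C \otimes_\Z \exp'_\infty) \quad \text{and} \quad \mu_p = \varphi_p \circ (\C_p \otimes_\Z \exp'_\infty).
\]
Once these are established, the desired identity follows by a direct substitution: since $\exp'_\infty$ is defined over $\Z$, we have $\C_p \otimes_{\C, j}(\C \otimes_\Z \exp'_\infty) = \C_p \otimes_\Z \exp'_\infty$, so the factor $(\C_p \otimes_\Z \exp'_\infty)^{-1}$ appearing in $(\C_p \otimes_{\C, j} \mu_\infty)^{-1}$ cancels against the factor $\C_p \otimes_\Z \exp'_\infty$ contributed by $\mu_p$, leaving $\varphi_p \circ (\C_p \otimes_{\C, j} \varphi_\infty)^{-1}$. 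The equality $\Psi_j(\rho) = \Omega_j(\rho)$ is then immediate from the definitions of the two periods as determinants of $\rho$-parts.

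For the archimedean factorisation, I would unwind the construction of $\mu_\infty$: it arises from the full lattice inclusion $\log_\infty(\mathcal{O}_E^\times) \subset E^0_\infty$ composed with the inverse of the isomorphism $\mu_E : \R \otimes_\Q E^0 \xrightarrow{\sim} E^0_\infty$. Under the canonical identification $\C \otimes_\Q E \cong \prod_{\sigma \in \Hom(E, \C)} \C$ via $z \otimes a \mapsto (z\sigma(a))_\sigma$, the extension $\C \otimes \mu_E^{-1}$ is simply the obvious inclusion, so that $\mu_\infty(1 \otimes x)$ is represented by the tuple $(x_\sigma)_\sigma$ for any $x \in \log_\infty(\mathcal{O}_E^\times) \subset E^0_\infty$. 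Since $E$ is totally real, the definition of $\log_\infty$ forces $\epsilon := \exp'_\infty(x)$ to be a totally positive unit with $x_\sigma = \log \sigma(\epsilon) = \log|\sigma(\epsilon)|$; this tuple is precisely $\varphi_\infty(1 \otimes \epsilon)$. Note also that $\C \otimes_\Z \exp'_\infty$ is genuinely an isomorphism, since the image of $\exp'_\infty$ consists of the totally positive units, a subgroup of finite index in $\mathcal{O}_E^\times$.

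The $p$-adic factorisation is essentially built into the definition of $\mu_p$ given in \S \ref{subsec:comparison-period}: it suffices to verify that $\varphi_p$ agrees with the composite $(\prod_w \log_w) \circ \lambda_p$ under the identification $\prod_{w \in \Sigma_p(E)} E_w \otimes_{\Q_p} \C_p \cong \prod_{\tau \in \Hom(E, \C_p)} \C_p$, which follows because each embedding $\tau$ extends continuously to the relevant completion $E_w$ and is compatible with the local logarithm $\log_w$. The main obstacle is really just bookkeeping: aligning the canonical identifications $\R \otimes_\Q E \cong E_\infty$ and $\C_p \otimes_\Q E \cong \prod_w E_w \otimes_{\Q_p} \C_p$ precisely enough to recognise that $\mu_\bullet$ and $\varphi_\bullet$ compute the `same' logarithm through slightly different intermediate constructions. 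There is no substantive analytic input, and in particular the identity holds as an equality of maps whether or not $\Leo(E, p)$ is true.
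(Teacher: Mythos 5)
Your proposal is correct and follows essentially the same approach as the paper's proof: the paper likewise reduces to totally positive units, shows that $(\C_p\otimes_{\C,j}\mu_\infty)^{-1}\circ(\C_p\otimes_{\C,j}\varphi_\infty)$ is the inverse of $\C_p\otimes_\Z\exp'_\infty$ (your archimedean factorisation), and then unwinds the definition of $\mu_p$ through $\lambda_p$ and the local logarithms to match $\varphi_p$ (your $p$-adic factorisation). You simply package the two halves as explicit factorisations of $\mu_\infty$ and $\mu_p$ through $\exp'_\infty$ before cancelling, whereas the paper tracks $1\otimes\epsilon$ through both composites in a single sequential computation; the content is identical.
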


\begin{proof}
Let $\epsilon \in \mathcal{O}_{F}^{\times}$. It suffices to show that
\[
\varphi_{p}(1 \otimes \epsilon) = \mu_{p} \circ (\C_{p} \otimes_{\C, j} \mu_{\infty})^{-1}  \circ (\C_{p} \otimes_{\C, j} \varphi_{\infty})(1 \otimes \epsilon).
\]
In fact, since the subgroup of $\mathcal{O}_{F}^{\times}$ of totally positive units is of finite index, 
we can and do assume without loss of generality that $\epsilon$ is totally positive. Thus we have
\[
(\C_{p} \otimes_{\C, j} \varphi_{\infty})(1 \otimes \epsilon) 
= (j (\log|\sigma(\epsilon)|))_{\sigma \in \Hom(E,\C)}
= (j (\log \sigma(\epsilon)))_{\sigma \in \Hom(E,\C)} 
\in \C_{p} \otimes_{\Q} E^{0},
\]
where we have used the identification
\[
\C_{p} \otimes_{\Q} E^{0} \subset \C_{p} \otimes_{\Q} E \cong \C_{p} \otimes_{\C,j} {\textstyle \prod_{\Hom(E,\C)}} \C \cong {\textstyle \prod_{\Hom(E,\C_{p})}} \C_{p}.
\]
However, we also have 
\[
\C_{p} \otimes_{\Z} \log_{\infty}(\mathcal{O}_{E}^{\times}) \subset \C_{p} \otimes_{\Q} E \cong {\textstyle \prod_{\Hom(E,\C_{p})}} \C_{p}
\]
and the canonical isomorphism 
$(\C_{p} \otimes_{\C,j} \mu_{\infty})^{-1} : \C_{p} \otimes_{\Q} E^{0} \cong \C_{p} \otimes_{\Z} \log_{\infty}(\mathcal{O}_{E}^{\times})$
can be considered as an equality inside $\C_{p} \otimes_{\Q} E \cong {\textstyle \prod_{\Hom(E,\C_{p})}} \C_{p}$.
Therefore
\begin{align}\label{eq:first-epsilon-calc}
(\C_{p} \otimes_{\C,j} \mu_{\infty})^{-1} \circ (\C_{p} \otimes_{\C, j} \varphi_{\infty})(1 \otimes \epsilon) &=
(j (\log \sigma(\epsilon)))_{\sigma \in \Hom(E,\C)} \\
&\in \C_{p} \otimes_{\Z} \log_{\infty}(\mathcal{O}_{E}^{\times})  \subset {\textstyle \prod_{\Hom(E,\C_{p})}} \C_{p}. \nonumber
\end{align}

Since $\epsilon$ is totally positive we have
\begin{eqnarray*}
\textstyle{\prod_{\sigma \in \Hom(E,\R)}} \R \cong E_{\infty} &\xrightarrow{\, \exp_{\infty} \, }& E_{\infty}^{\times} \subset E_{\infty} 
\cong  \textstyle{\prod_{\sigma \in \Hom(E,\R)}} \R  \\
(\log \sigma(\epsilon))_{\sigma} & \mapsto & (\sigma(\epsilon))_{\sigma}.
\end{eqnarray*}
Moreover, we also have
\begin{eqnarray*}
\mathcal{O}_{E}^{\times} & \hookrightarrow & E_{\infty}^{\times} \subset E_{\infty} 
\cong  \textstyle{\prod_{\sigma \in \Hom(E,\R)}} \R  \\
\epsilon & \mapsto & (\sigma(\epsilon))_{\sigma}. \nonumber
\end{eqnarray*}
Hence 
\begin{equation}\label{eq:second-epsilon-calc}
(\id_{\C_{p}} \otimes_{\Z} \exp_{\infty}')((j (\log \sigma (\epsilon)))_{\sigma \in \Hom(E,\C)}) 
= 1 \otimes \epsilon \in \C_{p} \otimes_{\Z} \mathcal{O}_{E}^{\times} 
\end{equation}
where $\exp_{\infty}':\log_{\infty}(\mathcal{O}_{E}^{\times}) \rightarrow \mathcal{O}_{E}^{\times}$ 
was defined in \S \ref{subsec:comparison-period}.
Therefore combining \eqref{eq:first-epsilon-calc} and \eqref{eq:second-epsilon-calc} gives
\[
(\id_{\C_{p}} \otimes_{\Z_{p}} \exp_{\infty}') \circ (\C_{p} \otimes_{\C,j} \mu_{\infty})^{-1} \circ (\C_{p} \otimes_{\C, j} \varphi_{\infty})
= 
\id_{\C_{p} \otimes_{\Z} \mathcal{O}_{E}^{\times}}.
\]

Now
\[
\lambda_{p}(1 \otimes \epsilon) = (\sigma_{w}(\epsilon))_{w \in \Sigma_{p}(E)} \in  \prod_{w \in \Sigma_{p}(E)} U^{1}_{E_{w}}
\]
where $\sigma_{w}: E \rightarrow E_{w}$ is the embedding of $E$ into its completion $E_{w}$. Moreover, we have
\[
\arraycolsep=2pt
\begin{array}{ccccc}
\displaystyle
\prod_{w \in \Sigma_{p}(E)} U^{1}_{E_{w}} & 
\xrightarrow{\prod_{w} \log_{w}} & 
\displaystyle
\prod_{w \in \Sigma_{p}(E)} E_{w} & \cong \Q_{p} \otimes_{\Q} E \subset \C_{p} \otimes_{\Q} E   \cong & 
\displaystyle
\prod_{w \in \Sigma_{p}(E)} \prod_{\tau_{w} \in \Hom(E_{w},\C_{p})} \C_{p} \\
(\sigma_{w}(\epsilon))_{w} &  \xmapsto{\phantom{\prod_{w} \log_{w}}} & 
(\log_{w} (\sigma_{w}(\epsilon)))_{w} 
& \xmapsto{\qquad \qquad \qquad \qquad \qquad} & 
(\tau_{w}( \log_{w} (\sigma_{w}(\epsilon))))_{w,\tau_{w}} .
\end{array}
\]
Finally, we have 
\[
\arraycolsep=2pt
\begin{array}{ccl}
\displaystyle
\prod_{w \in \Sigma_{p}(E)} \prod_{\tau_{w} \in \Hom(E_{w},\C_{p})} \C_{p} &
\cong & 
\displaystyle
\prod_{\tau \in \Hom(E,\C_{p})} \C_{p} \quad \textrm{ via } \quad  (w,\tau_{w}) \mapsto \tau = \tau_{w} \circ \sigma_{w} \\
(\tau_{w} (\log_{w} (\sigma_{w}(\epsilon))))_{w,\tau_{w}}
& \mapsto &
(\log_{p}(\tau(\epsilon)))_{\tau}  = \varphi_{p}(1 \otimes \epsilon) ,
\end{array}
\]
which gives the desired result.
\end{proof}

\section{Rational-valued characters}\label{sec:rational-valued-characters}

\subsection{The trivial character}
Let $E/F$ be a finite Galois extension of totally real number fields and let $G=\Gal(E/F)$.
Let $p$ be a prime. 

\begin{prop}[{\cite[Remark p.\ 138]{MR782485}}] \label{prop:p-Stark-for-trivial-character}
$\Leo(F,p)$ holds if and only if the $p$-adic Stark conjecture at $s=1$ holds for the trivial character $\mathbbm{1}_{G}$. 
\end{prop}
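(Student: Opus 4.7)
The plan is to reduce both sides of \eqref{eq:p-adic-Stark-at-s=1-eq} for $\rho = \mathbbm{1}_{G}$ to explicit regulator expressions via Colmez's $p$-adic class number formula (Theorem \ref{thm:p-adic-class-number-formula}) and the classical analytic class number formula at $s=1$ for $F$, and to identify the comparison period $\Omega_{j}(\mathbbm{1}_{G})$ with the regulator quotient $R_{F,p}/j(R_{F,\infty})$. By Remark \ref{rmk:indep-choice-of-sigma} I may fix $\Sigma = \Sigma_{p}(F) \cup \Sigma_{\infty}(F)$, so that $L_{p,\Sigma}(s, \mathbbm{1}_{G}) = \zeta_{F,p}(s)$ and $L_{\Sigma}(s, \mathbbm{1}_{G}) = \zeta_{F,\Sigma}(s) = \zeta_{F}(s) \prod_{v \in \Sigma_{p}(F)}(1 - \Norm_{F/\Q}(v)^{-s})$.

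The key computation is that of $\Omega_{j}(\mathbbm{1}_{G})$. Using Lemma \ref{lem:psi-equal-to-phi} I rewrite $\Omega_{j}(\mathbbm{1}_{G}) = \Psi_{j}(\mathbbm{1}_{G})$, reducing to computing the determinant of $\varphi_{p} \circ (\C_{p} \otimes_{\C,j} \varphi_{\infty})^{-1}$ on its trivial isotypic component, i.e., on $G$-invariants. Since $E/F$ is Galois, there are canonical identifications $(\mathcal{O}_{E}^{\times})^{G} = \mathcal{O}_{F}^{\times}$ and $(E^{0})^{G} = F^{0}$ (the latter using $\Tr_{E/\Q}|_{F} = [E:F]\Tr_{F/\Q}$), and under the natural surjection $\Hom(E, \C) \twoheadrightarrow \Hom(F,\C)$ the restrictions of $\varphi_{\infty}$ and $\varphi_{p}$ coincide with the classical and $p$-adic Dirichlet regulator maps for $F$. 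A direct determinant calculation in a system of fundamental units of $\mathcal{O}_{F}^{\times}$ and an appropriate basis of $F^{0}$ then yields
\[
\Omega_{j}(\mathbbm{1}_{G}) = \Psi_{j}(\mathbbm{1}_{G}) = \frac{R_{F,p}}{j(R_{F,\infty})},
\]
where the quotient is made well-defined in the canonical sense of Remark \ref{rmk:sign-of-quotient}. By Theorem \ref{thm:leo-results}(v), this is non-zero if and only if $\Leo(F,p)$ holds.

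Given this identification, both directions of the biconditional follow. For ($\Leftarrow$): if the $p$-adic Stark conjecture holds for $\mathbbm{1}_{G}$ then by Remark \ref{rmk:p-adic-Stark-closed-under-Z-linear-combs} we have $\Omega_{j}(\mathbbm{1}_{G}) \neq 0$, hence $\Leo(F,p)$. For ($\Rightarrow$): assume $\Leo(F,p)$. Then by Corollary \ref{cor:simple-pole-iff-leopoldt}, $\zeta_{F,p}(s)$ has a simple pole at $s=1$, and Colmez's formula gives
\[
L_{p,\Sigma}^{*}(1, \mathbbm{1}_{G}) = \frac{2^{[F:\Q]} h_{F} R_{F,p}}{w_{F} \sqrt{|d_{F}|}} \prod_{v \in \Sigma_{p}(F)} (1 - \Norm_{F/\Q}(v)^{-1}).
\]
The classical analytic class number formula at $s=1$ for the totally real field $F$ gives the analogous expression for $\zeta_{F,\Sigma}^{*}(1) = L_{\Sigma}^{*}(1, \mathbbm{1}_{G})$ with $R_{F,\infty}$ in place of $R_{F,p}$. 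All factors other than the regulators cancel in the ratio, so that \eqref{eq:p-adic-Stark-at-s=1-eq} for $\rho = \mathbbm{1}_{G}$ reduces to exactly the identity $\Omega_{j}(\mathbbm{1}_{G}) = R_{F,p}/j(R_{F,\infty})$ established above.

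The main technical point is the matrix computation showing $\Omega_{j}(\mathbbm{1}_{G}) = R_{F,p}/j(R_{F,\infty})$ in the canonically well-defined sense of Remark \ref{rmk:sign-of-quotient}; once this is pinned down, everything else amounts to combining the two class number formulas.
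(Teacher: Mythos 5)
Your proposal is correct and follows essentially the same route as the paper: fix $\Sigma=\Sigma_p(F)\cup\Sigma_\infty(F)$, use Lemma \ref{lem:psi-equal-to-phi} to identify $\Omega_j(\mathbbm{1}_G)=\Psi_j(\mathbbm{1}_G)=R_{F,p}\cdot j(R_{F,\infty})^{-1}$, and then compare the $p$-adic and classical analytic class number formulas using Remark \ref{rmk:sign-of-quotient} and Corollary \ref{cor:simple-pole-iff-leopoldt}. You merely spell out the $G$-invariant identification behind $\Psi_j(\mathbbm{1}_G)=R_{F,p}/j(R_{F,\infty})$ a bit more explicitly than the paper, which simply asserts it.
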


\begin{proof}
Let $\Sigma = \Sigma_{p}(F) \cup \Sigma_{\infty}(F)$. Then by Remark \ref{rmk:indep-choice-of-sigma} and Lemma \ref{lem:psi-equal-to-phi} the 
$p$-adic Stark conjecture at $s=1$ for $\mathbbm{1}_{G}$ is equivalent to the assertion that the equality
\[
L^{*}_{p,\Sigma}(1,\mathbbm{1}_{G}) = \Psi_{j}(\mathbbm{1}_{G}) \cdot j \left( L^{*}_{\Sigma}(1,\mathbbm{1}_{G}) \right)
\]
holds for every field isomorphism $j: \C \cong \C_{p}$. Moreover, $\Psi_{j}(\mathbbm{1}_{G}) = R_{F,p} \cdot j(R_{F,\infty})^{-1}$.
Hence, by using Remark \ref{rmk:sign-of-quotient} and Corollary \ref{cor:simple-pole-iff-leopoldt},
the desired result follows by comparing the $p$-adic analytic class number formula at $s=1$ 
(Theorem \ref{thm:p-adic-class-number-formula}) with the usual analytic class number formula at $s=1$.
\end{proof}

\begin{corollary}[{\cite[Remark 5.4]{MR2290587}}]\label{cor:p-Stark-for-induced-chars}
Let $H$ be a subgroup of $G$ and let $\rho = \ind_{H}^{G} \mathbbm{1}_{H}$.
Then $\Leo(E^{H},p)$ holds if and only if  the $p$-adic Stark conjecture at $s=1$ holds for $\rho$. 
\end{corollary}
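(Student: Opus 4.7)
The plan is to reduce the statement to Proposition \ref{prop:p-Stark-for-trivial-character} applied to the extension $E/E^{H}$, exploiting the fact that every ingredient of Conjecture \ref{conj:p-adic-Stark-at-s=1} is compatible with induction of characters. Observe that $E^{H}$ is a totally real number field with $\Gal(E/E^{H}) = H$, so Proposition \ref{prop:p-Stark-for-trivial-character} with $E^{H}$ playing the role of the base field $F$ will yield the equivalence with $\Leo(E^{H}, p)$ once the reduction is in place.

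First I would let $\Sigma_{H}$ denote the set of places of $E^{H}$ lying above those in $\Sigma$; since $\Sigma \supseteq \Sigma_{p}(F) \cup \Sigma_{\infty}(F)$, we have $\Sigma_{H} \supseteq \Sigma_{p}(E^{H}) \cup \Sigma_{\infty}(E^{H})$, so $\Sigma_{H}$ is admissible for the $p$-adic Stark conjecture at $s=1$ for the extension $E/E^{H}$. I would then assemble three compatibilities: (i) by Lemma \ref{lem:properties-of-omega}(ii), the comparison period $\Omega_{j}(\rho)$ computed for $E/F$ equals $\Omega_{j}(\mathbbm{1}_{H})$ computed for $E/E^{H}$; (ii) by the invariance of complex Artin $L$-functions under induction, $L_{\Sigma}(s,\rho)$ and $L_{\Sigma_{H}}(s,\mathbbm{1}_{H}) = \zeta_{E^{H},\Sigma_{H}}(s)$ are identical meromorphic functions of $s$, hence have the same leading term at $s=1$; (iii) similarly, the corresponding $p$-adic Artin $L$-functions $L_{p,\Sigma}(s,\rho)$ and $L_{p,\Sigma_{H}}(s,\mathbbm{1}_{H})$ agree as $p$-adic meromorphic functions, and therefore so do their leading terms at $s=1$.

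Since $\mathbbm{1}_{H}$ is rational-valued, $\mathbbm{1}_{H}^{j^{-1}} = \mathbbm{1}_{H}$ for every field isomorphism $j:\C \cong \C_{p}$. Combining this with (i), (ii) and (iii), the equality \eqref{eq:p-adic-Stark-at-s=1-eq} defining Conjecture \ref{conj:p-adic-Stark-at-s=1} for $\rho$ and $E/F$ is, factor by factor, identical to the corresponding equality for $\mathbbm{1}_{H}$ and $E/E^{H}$. Applying Proposition \ref{prop:p-Stark-for-trivial-character} to $E/E^{H}$ then immediately yields the desired equivalence with $\Leo(E^{H},p)$; invoking Remark \ref{rmk:indep-choice-of-sigma} handles the fact that Proposition \ref{prop:p-Stark-for-trivial-character} is stated for a fixed choice of admissible $\Sigma$. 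I do not anticipate a substantive obstacle: the proof is essentially pure bookkeeping in induction-functoriality. The only point to be handled with care is verifying that the set $\Sigma_{H}$ meets the admissibility condition and that the passage to leading terms at $s=1$ is compatible with the identifications of the complex and $p$-adic $L$-functions under induction.
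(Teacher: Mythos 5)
Your proof is correct and follows essentially the same approach as the paper's: the paper simply cites Remark \ref{rmk:invariance-of-p-adic-Stark} (invariance of the $p$-adic Stark conjecture under induction) together with Proposition \ref{prop:p-Stark-for-trivial-character} applied to $E/E^{H}$, which is exactly the reduction you carry out. The only difference is that you unpack the induction-invariance into the three factor-by-factor compatibilities, whereas the paper delegates that bookkeeping to the cited remark.
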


\begin{proof}
This is just the combination of Proposition \ref{prop:p-Stark-for-trivial-character} and the fact that the truth of the $p$-adic Stark conjecture at $s=1$
is invariant under induction (see Remark \ref{rmk:invariance-of-p-adic-Stark}).
\end{proof}

\subsection{Permutation characters and rational-valued characters}
Let $G$ be a finite group and let $\rho \in  \Char_{\Q}(G)$.
Then by Artin's induction theorem (see \cite[(15.4)]{MR632548}, for instance)
there exists a natural number $n_{\rho}$ dividing $|G|$ such that 
\begin{equation}\label{eq:virtual-expression}
n_{\rho} \cdot \rho = \sum n_{H} \cdot \ind^{G}_{H} \mathbbm{1}_{H}
\end{equation}
where the sum runs over all subgroups $H$ of $G$ and each $n_{H}$ is an integer. 
By definition $\rho \in \Perm(G)$ if and only if one can take $n_{\rho}=1$.

The following result is analogous to but different from \cite[Corollary 5.7]{MR2290587}, which relates
leading terms of $p$-adic Artin $L$-functions at $s=1$ to those of `global Zeta isomorphisms'. 

\begin{prop}\label{prop:leo-implies-p-adic-Stark-for-virtual-perm-chars}
Let $E/F$ be a finite Galois extension of totally real number fields and let $G=\Gal(E/F)$.
Let $p$ be a prime and let $\Sigma$ be a finite set of places of $F$ containing $\Sigma_{p}(F) \cup \Sigma_{\infty}(F)$.
Suppose that $\Leo(E,p)$ holds. Let $\rho \in \Char_{\Q}(G) \subset R_{\C_{p}}(G)$ and suppose that 
the expression \eqref{eq:virtual-expression} holds for $\rho$.
Then for every field isomorphism $j: \C \cong \C_{p}$ we have 
\[
\left( L_{p, \Sigma}^{*}(1,\rho) \right)^{n_{\rho}} = \left(\Omega_{j}(\rho) \cdot j\left( L_{\Sigma}^{*}(1, \rho^{j^{-1}})\right)\right)^{n_{\rho}}. 
\]
In particular, if $\rho \in \Perm(G)$, then the $p$-adic Stark conjecture at $s=1$ holds for $\rho$.
\end{prop}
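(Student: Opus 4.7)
The strategy is to use Artin's induction theorem to reduce to the case of induced trivial characters, where Corollary 4.2 applies, and then combine these via the closure of the conjecture under $\Z$-linear combinations.

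More concretely, the plan is as follows. Since $\Leo(E,p)$ is assumed, Remark 3.9 tells us that the statement of Conjecture 3.7 extends naturally to all virtual characters $\rho \in R_{\C_p}(G)$. By Theorem 3.1 (iv), $\Leo(E,p)$ implies $\Leo(E^H,p)$ for every subgroup $H \leq G$, so Corollary 4.2 yields the $p$-adic Stark conjecture at $s=1$ for each character of the form $\ind_H^G \mathbbm{1}_H$. Next I would invoke Remark 3.13, which asserts that the subset of $R_{\C_p}(G)$ for which Conjecture 3.7 holds is closed under $\Z$-linear combinations. Applied to the Artin decomposition
\[
n_{\rho} \cdot \rho \;=\; \sum_{H} n_{H}\,\ind^{G}_{H}\mathbbm{1}_{H},
\]
this delivers the conjecture for the virtual character $n_{\rho}\rho$, i.e.
\[
L_{p,\Sigma}^{*}(1,n_{\rho}\rho) \;=\; \Omega_{j}(n_{\rho}\rho)\cdot j\bigl(L_{\Sigma}^{*}(1,(n_{\rho}\rho)^{j^{-1}})\bigr).
\]

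To finish, I would translate this identity back into the desired $n_{\rho}$-th power statement using the multiplicativity of all three quantities with respect to addition of characters: $L_{p,\Sigma}^{*}(1,n_{\rho}\rho) = L_{p,\Sigma}^{*}(1,\rho)^{n_{\rho}}$ (from the corresponding property of complex $L$-functions transferred via interpolation, together with \eqref{eq:p-adic-L-fnc-leading-term}), $\Omega_{j}(n_{\rho}\rho) = \Omega_{j}(\rho)^{n_{\rho}}$ (Lemma 3.5 (i)), and $j(L_{\Sigma}^{*}(1,(n_{\rho}\rho)^{j^{-1}})) = j(L_{\Sigma}^{*}(1,\rho^{j^{-1}}))^{n_{\rho}}$ (from the analogous multiplicativity of complex Artin $L$-functions, noting via \eqref{eq:Artin-L-order-of-vanishing} that the order of vanishing also multiplies). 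The final assertion for $\rho \in \Perm(G)$ is then immediate on taking $n_{\rho}=1$.

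I do not expect any serious obstacle: the hard analytic input (the $p$-adic class number formula) is already packaged into Corollary 4.2, and the entire argument is a formal manipulation using the multiplicativity and induction invariance of the three leading-term quantities. The mildest subtlety is simply that one cannot in general avoid the $n_{\rho}$-th power: the Artin decomposition only expresses $n_{\rho}\rho$, not $\rho$ itself, as a $\Z$-linear combination of permutation characters, which is precisely why $\Perm(G)$ (where $n_{\rho}=1$) gives the unconditional conclusion while general rational-valued $\rho$ give only an $n_{\rho}$-th power identity.
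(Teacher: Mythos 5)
Your proof follows exactly the same route as the paper: deduce $\Leo(E^{H},p)$ for all $H\leq G$ from Theorem~\ref{thm:leo-results}~(iv), apply Corollary~\ref{cor:p-Stark-for-induced-chars} to each $\ind_{H}^{G}\mathbbm{1}_{H}$, use Remark~\ref{rmk:p-adic-Stark-closed-under-Z-linear-combs} together with the Artin decomposition \eqref{eq:virtual-expression} to obtain the conjecture for $n_{\rho}\cdot\rho$, and then unwind the $n_{\rho}$-th power via the multiplicativity of $\Omega_{j}(-)$ and of the two families of $L$-functions. The only difference is that you spell out the final multiplicativity step slightly more explicitly; the substance is identical.
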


\begin{proof}
By Theorem \ref{thm:leo-results} (iv), $\Leo(E^{H},p)$ holds for every subgroup $H \leq G$.
Thus by Corollary \ref{cor:p-Stark-for-induced-chars}, the $p$-adic Stark conjecture at $s=1$ holds for every $\ind^{G}_{H} \mathbbm{1}_{H}$.
Hence Remark \ref{rmk:p-adic-Stark-closed-under-Z-linear-combs} and expression \eqref{eq:virtual-expression} show that it holds for $n_{\rho} \cdot \rho$.
The desired result now follows from the  properties of $\Omega_{j}(-)$ and of both complex and $p$-adic Artin $L$-functions with respect to addition of characters.
\end{proof}

\begin{corollary}\label{cor:leo-implies-p-adic-Stark-for-virtual-perm-chars}
Let $E/F$ be a finite Galois extension of totally real number fields such that $G:=\Gal(E/F)$ satisfies $\Perm(G)= R_{\C}(G)$.
Let $p$ be a prime and suppose that  $\Leo(E,p)$ holds.
Then the $p$-adic Stark conjecture at $s=1$ holds for all $\rho \in R_{\C_{p}}(G)$.
\end{corollary}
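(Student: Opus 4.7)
The plan is to reduce to Proposition~\ref{prop:leo-implies-p-adic-Stark-for-virtual-perm-chars} by showing that every element of $R_{\C_p}(G)$ is automatically a virtual permutation character in the sense of expression \eqref{eq:virtual-expression} with $n_\rho = 1$. First I would observe that the chain of inclusions $\Perm(G) \subset R_\Q(G) \subset R_\C(G)$ combined with the hypothesis $\Perm(G) = R_\C(G)$ forces all three subsets to coincide; in particular every irreducible $\C$-character of $G$ takes values in $\Z$.

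Next I would fix an arbitrary field isomorphism $j : \C \cong \C_p$ and use the induced bijection $\Irr_\C(G) \leftrightarrow \Irr_{\C_p}(G)$, $\chi \mapsto \chi^j$, which extends to a ring isomorphism $R_\C(G) \xrightarrow{\sim} R_{\C_p}(G)$. Given $\rho \in R_{\C_p}(G)$, the translate $\rho^{j^{-1}}$ lies in $R_\C(G) = \Perm(G)$, and so can be written as
\[
\rho^{j^{-1}} = \sum_H n_H \cdot \ind_H^G \mathbbm{1}_H
\]
for some integers $n_H$, the sum ranging over subgroups $H$ of $G$. Because each induced character $\ind_H^G \mathbbm{1}_H$ is $\Z$-valued and $j$ restricts to the identity on $\Z$, applying $j$ to both sides yields the same identity for $\rho$ itself inside $R_{\C_p}(G)$. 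In particular $\rho \in \Perm(G) \subset R_{\C_p}(G)$.

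Finally, Theorem~\ref{thm:leo-results}(iv) ensures that $\Leo(E^H, p)$ holds for every subgroup $H \leq G$ since $\Leo(E,p)$ is assumed. Corollary~\ref{cor:p-Stark-for-induced-chars} then establishes the $p$-adic Stark conjecture at $s=1$ for each character $\ind_H^G \mathbbm{1}_H$, and Remark~\ref{rmk:p-adic-Stark-closed-under-Z-linear-combs} (closure of the conjecture under $\Z$-linear combinations) propagates this to $\rho$; equivalently, one invokes Proposition~\ref{prop:leo-implies-p-adic-Stark-for-virtual-perm-chars} with $n_\rho = 1$. There is no substantive obstacle beyond this identification — the whole argument is a packaging of the hypothesis together with the already established invariance properties of the conjecture.
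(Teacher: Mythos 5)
Your proposal is correct and follows essentially the same route the paper intends: the corollary is an immediate consequence of the final sentence of Proposition~\ref{prop:leo-implies-p-adic-Stark-for-virtual-perm-chars}, once one notes that $\Perm(G) = R_{\C}(G)$ forces every element of $R_{\C_p}(G)$ (under any identification $j:\C \cong \C_p$) to lie in $\Perm(G)$. Your explicit unwinding of why the identification via $j$ is harmless (the $\ind_H^G\mathbbm{1}_H$ are $\Z$-valued and $j$ fixes $\Q$) is a reasonable spelling-out of a step the paper leaves implicit.
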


\begin{remark}
The condition $\Perm(G)= R_{\C}(G)$ is discussed in Remark \ref{rmk:all-chars-perm-chars}. 
\end{remark}

\section{Absolutely abelian characters}\label{sec:p-adic-Stark-abs-abelian}

We now prove the $p$-adic Stark conjecture at $s=1$ for absolutely abelian characters by building on 
work of Ritter and Weiss \cite[\S 10]{MR1423032} and using standard results on Dirichlet $L$-functions and Kubota-Leopoldt $p$-adic $L$-functions 
(see \cite{MR1421575}, for example).

\begin{remark}\label{rmk:solomon-abs-abelian}
In \cite[\S 3.5]{MR1906480}, Solomon proved a refinement \cite[Conjecture 3.6]{MR1906480} of his version of the $p$-adic Stark conjecture at $s=1$
\cite[Conjecture 3.3]{MR1906480} in the absolutely abelian case, but under the additional hypothesis that $p$ does not divide the conductor of the corresponding Dirichlet character. 
\end{remark}

\begin{theorem}\label{thm:p-adic-Stark-for-absolutely-abelian-characters}
Let $E/F$ be a finite Galois extension of totally real number fields and let $G=\Gal(E/F)$. Let $p$ be a prime.
Suppose that $\rho \in R_{\C_{p}}(G)$ is an absolutely abelian character, i.e.,
there exists a normal subgroup $N$ of $G$ such that $\rho$ factors through $G/N \cong \Gal(E^{N}/F)$ and $E^{N}/\Q$ is abelian. 
Then the $p$-adic Stark conjecture at $s=1$ holds for $\rho$.
\end{theorem}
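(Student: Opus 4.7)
The plan is to reduce to the case of a faithful linear character of $\Gal(E/\Q)$ with $E/\Q$ abelian, and then combine the classical formulas for Dirichlet and Kubota--Leopoldt $L$-functions at $s=1$ with the cyclotomic-unit computation of Ritter--Weiss \cite[\S 10]{MR1423032}.

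First, by invariance of Conjecture \ref{conj:p-adic-Stark-at-s=1} under inflation (Remark \ref{rmk:invariance-of-p-adic-Stark}), I would replace $E/F$ by $E^{N}/F$ and $\rho$ by the character through which it factors, thereby assuming that $E/\Q$ is abelian. Then $G=\Gal(E/F)$ is a subgroup of the abelian group $\Gal(E/\Q)$ and hence abelian, so every irreducible $\C_{p}$-character of $G$ is one-dimensional. Writing $\rho$ as a $\Z$-linear combination of irreducible characters and using closure under $\Z$-linear combinations (Remark \ref{rmk:p-adic-Stark-closed-under-Z-linear-combs}), I reduce to the case that $\rho = \chi$ is a single linear character. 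Setting $\tilde{G}:=\Gal(E/\Q)$, Frobenius reciprocity for abelian groups gives
\[
\ind_{G}^{\tilde{G}} \chi = \sum_{\tilde{\chi}|_{G}=\chi} \tilde{\chi},
\]
where the sum runs over the linear extensions of $\chi$ to $\tilde{G}$. Invariance under induction and closure under $\Z$-linear combinations then let me pass to each such $\tilde{\chi}$ and so assume $F=\Q$. A final inflation allows me to assume $\rho$ is faithful, so $\tilde{G}$ is cyclic, $E$ is a totally real subfield of some cyclotomic field $\Q(\zeta_{f})$, and $\rho$ corresponds to an even primitive Dirichlet character $\chi_{\rho}$ of conductor $f$. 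Throughout these reductions, Theorem \ref{thm:leo-results}(iii) ensures that $\Leo(E,p)$ holds, so $\Omega_{j}(\rho)\neq 0$ in each step.

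The trivial case $\rho=\mathbbm{1}$ is Proposition \ref{prop:p-Stark-for-trivial-character} applied with $F=\Q$, so I assume $\chi_{\rho}\neq\mathbbm{1}$. After enlarging $\Sigma$ (cf.\ Remark \ref{rmk:indep-choice-of-sigma}) to contain all rational primes dividing $fp$, the complex Artin $L$-function $L_{\Sigma}(s,\rho^{j^{-1}})$ is identified with the imprimitive Dirichlet $L$-function for $\chi_{\rho}^{j^{-1}}$, and the defining interpolation property identifies $L_{p,\Sigma}(s,\rho)$ with the corresponding Kubota--Leopoldt $p$-adic $L$-function, truncated at the places in $\Sigma$. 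Since $\chi_{\rho}\neq\mathbbm{1}$, both leading terms at $s=1$ are just the values there. The classical Dirichlet formula expresses $L(1,\chi_{\rho}^{j^{-1}})$ as a $\C$-linear combination of the $\log|1-\zeta_{f}^{a}|$, while the Leopoldt--Iwasawa formula (see e.g.\ \cite{MR1421575}) expresses $L_{p}(1,\chi_{\rho})$ as the analogous $\C_{p}$-linear combination of the $\log_{p}(1-\zeta_{f}^{a})$, differing only by the Euler factor $1-\chi_{\rho}(p)/p$. Using the alternative description of Lemma \ref{lem:psi-equal-to-phi} together with a basis of cyclotomic units for the $\chi_{\rho}$-isotypic component of $\C_{p}\otimes_{\Z}\mathcal{O}_{E}^{\times}$---this is precisely the computation carried out in \cite[\S 10]{MR1423032}---one finds that $\Omega_{j}(\rho)$ equals exactly the ratio of the $p$-adic logarithmic sum to the complex one, with the Euler factor at $p$ absorbed into the $\Sigma$-truncation. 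Matching these identities yields the equality of Conjecture \ref{conj:p-adic-Stark-at-s=1}.

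The main obstacle lies in this final step: one must carefully track Gauss sums, the Euler factor $1-\chi_{\rho}(p)/p$, signs arising from the choice of $\sqrt{|d_{E}|}$ (cf.\ Remark \ref{rmk:sign-of-quotient}), and the explicit identification of a cyclotomic-unit basis realising the $\chi_{\rho}$-isotypic component. This bookkeeping is exactly what Ritter and Weiss \cite[\S 10]{MR1423032} perform, and once their output is reinterpreted in terms of $\Omega_{j}(\rho)$ via Lemma \ref{lem:psi-equal-to-phi}, it supplies the required equality.
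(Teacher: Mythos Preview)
Your proposal is correct and follows essentially the same strategy as the paper: reduce (in a slightly different but equally valid order) to a nontrivial even primitive Dirichlet character, compute $\Omega_{j}(\rho)=\Psi_{j}(\rho)$ via Lemma~\ref{lem:psi-equal-to-phi} using the Ritter--Weiss cyclotomic element, and match against the classical formulas for $L(1,\chi)$ and $L_{p}(1,\rho)$ from \cite{MR1421575}. Two small corrections: the paper inflates to $E=\Q(\zeta_{n})^{+}$ (rather than the smaller field on which $\rho$ is faithful) so as to work directly with the unit $\xi_{n}$, and the sign of $\sqrt{|d_{E}|}$ you mention plays no role in this character-by-character computation; note also that \cite[\S 10]{MR1423032} supplies the key unit and the complex regulator identity, but the $p$-adic analog and its identification with $\Omega_{j}(\rho)$ are carried out in the paper itself rather than merely cited.
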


\begin{proof}
We first observe that we can make a number of simplifying assumptions.
Recall from Remark \ref{rmk:invariance-of-p-adic-Stark} that the truth of the $p$-adic Stark conjecture at $s=1$
is invariant under induction and inflation. By invariance under induction we may assume that $F=\Q$.
Moreover, by Remark \ref{rmk:p-adic-Stark-closed-under-Z-linear-combs} we may also assume that $\rho$ is irreducible. 
By Proposition \ref{prop:p-Stark-for-trivial-character} and the fact that $\Leo(\Q,p)$ holds, we need only consider the case in which $\rho$ is non-trivial.
Finally, by the Kronecker-Weber theorem and invariance under inflation we may further assume that $\rho$ is a Dirichlet character 
and that $E = \Q(\zeta_{n})^{+}$, the maximal totally real subfield of $\Q(\zeta_{n})$, where $n$ is the conductor of $\rho$
and $\zeta_{n}$ denotes a fixed primitive $n$-th root of unity. 
Note that $\Leo(E,p)$ holds by Theorem \ref{thm:leo-results} (iii).

Note that $n \not\equiv 2 \bmod{4}$ and let $n = \prod_{i=1}^{s} p_{i}^{e_i}$ be its prime factorisation.
Following \cite[Theorem 8.3]{MR1421575} and \cite[\S 10]{MR1423032},
let $I$ run through all proper subsets of $\{1, \dots, s\}$ and set $n_{I} := \prod_{i \in I} p_{i}^{e_i}$. 
Let
\[
	\xi_{n} := \prod_{I} (1 - \zeta_{n}^{n_{I}}) (1 - \zeta_{n}^{-n_{I}}) =  \prod_{I}  \left( 2 - (\zeta_{n}^{n_{I}} + \zeta_{n}^{-n_{I}})\right) \in E^{\times}.
\]
Observe that $\xi_{n}$ is totally positive. Moreover, for every $\sigma \in G$, 
each $(1 - \sigma(\zeta_{n})^{n_{I}})/(1 - \zeta_{n}^{n_{I}})$ is a cyclotomic unit
and thus $\xi_{n}^{\sigma - 1} := \sigma(\xi_{n})/\xi_{n}$ is a totally positive element of $\mathcal{O}_{E}^{\times}$.

Let $S_{\infty}$ denote the set of archimedean places of $E$. The map $\zeta_{n} \mapsto \exp(2 \pi i / n)$
embeds $\Q(\zeta_{n})$ into $\C$, and its restriction to $E$ gives a distinguished archimedean place $\infty \in S_{\infty}$.
Then $\Z S_{\infty}$ is a free $\Z[G]$-module of rank $1$ with basis $\infty$. 
Let $\Delta S_{\infty}$ denote the kernel of the augmentation map $\Z S_{\infty} \rightarrow \Z$
which sends each place in $S_{\infty}$ to $1$ and note that $\{ \sigma - 1 \mid \sigma \in G, \, \sigma \neq 1\}$ is a $\Z$-basis for $\Delta S_{\infty}$.
Thus, following \cite[\S 10]{MR1423032}, 
the $\Z[G]$-linear map $\Z S_{\infty} \rightarrow E^{\times}$ which sends $\infty$ to $\xi_{n}$ induces an embedding
\begin{equation*}
	\phi: \Delta S_{\infty} \hookrightarrow \mathcal{O}_{E}^{\times}.
\end{equation*}
Let $e_{\rho} := |G|^{-1} \sum_{\sigma \in G} \rho(\sigma^{-1}) \sigma \in \C_{p}[G]$
be the primitive idempotent corresponding to $\rho$. 
As $\rho$ is a non-trivial linear character, we have
\begin{eqnarray*}
(\C_{p} \otimes_{\Z} \Delta S_{\infty})^{\rho} 
&=& \Hom_{\C_{p}[G]}(e_{\rho}\C_{p}[G], \C_{p} \otimes_{\Z} \Delta S_{\infty}) \\
&=& e_{\rho} \cdot (\C_{p} \otimes_{\Z} \Delta S_{\infty}) = e_{\rho} \cdot \C_{p} S_{\infty} = e_{\rho} \C_{p}[G] \cdot \infty. 
\end{eqnarray*}
Hence $e_{\rho} \infty$ is a $\C_{p}$-basis of $(\C_{p} \otimes_{\Z} \Delta S_{\infty})^{\rho}$.
Let $j: \C \cong \C_{p}$ be a field isomorphism and recall the definitions of $\varphi_{\infty}$ and $\varphi_{p}$ 
from \S \ref{subsec:alt-description-of-omega}. We denote the composite isomorphism of $\C_{p}[G]$-modules
\[
	\C_{p} \otimes_{\Z} \Delta S_{\infty} \xrightarrow{1 \otimes \phi} \C_{p} \otimes_{\Z} \mathcal{O}_{E}^{\times}
	\xrightarrow{1 \otimes_{\C, j} \varphi_{\infty}} \C_{p} \otimes_{\Q} E^{0} 
	\cong \C_{p} \otimes_{\Z} \Delta S_{\infty}
\]
again by $\varphi_{\infty} \phi$ and
compute the image of $e_{\rho} \infty$ under this map (steps justified below):
\begin{eqnarray}
	\varphi_{\infty} \phi(e_{\rho} \infty)
	& = & |G|^{-1} \sum_{\sigma \in G} 
			\rho(\sigma^{-1}) (1 \otimes_{\C,j} \varphi_{\infty})(\xi_{n}^{\sigma-1}) \nonumber \\
	& = & |G|^{-1} \sum_{\sigma \in G} \rho(\sigma^{-1}) 
			\sum_{\sigma' \in G} j\left(\log \left((\sigma')^{-1}(\xi_{n}^{\sigma-1})\right)\right) \cdot \sigma' \infty  \nonumber \\
	& = & |G|^{-1} \sum_{\sigma \in G} \sum_{\sigma' \in G} \rho\left((\sigma \sigma')^{-1}\right) 
			 j\left(\log \left((\sigma')^{-1}(\xi_{n}^{\sigma \sigma'-1})\right)\right) \cdot \sigma' \infty  \nonumber \\
	& = & |G|^{-1} \sum_{\sigma \in G} \sum_{\sigma' \in G} \rho(\sigma^{-1}) \rho((\sigma')^{-1}) 
			 j\left(\log \xi_{n}^{\sigma}\right) \cdot \sigma' \infty \label{eqn:phi-infinity-expression} \\
	&  & - |G|^{-1} \sum_{\sigma \in G} \sum_{\sigma' \in G} \rho(\sigma^{-1}) \rho((\sigma')^{-1}) 
			 j\left(\log \xi_{n}^{(\sigma')^{-1}}\right) \cdot \sigma' \infty  \nonumber \\
	& = & \left(\sum_{\sigma \in G} \rho(\sigma^{-1}) j\left(\log \xi_{n}^{\sigma}\right)\right) e_{\rho} \infty.  \nonumber
\end{eqnarray}
Here, the first equality holds because $\sum_{\sigma \in G} \rho(\sigma^{-1})$  vanishes
and we thus have an equality $e_{\rho} = |G|^{-1} \sum_{\sigma \in G} \rho(\sigma^{-1})(\sigma - 1)$;
the second equality holds by the definition of $\varphi_{\infty}$ and the fact that $\xi_{n}^{\sigma-1}$ is totally positive; the third and fourth equalities
are clear; and the last equality again follows from the vanishing of $\sum_{\sigma \in G} \rho(\sigma^{-1})$.
A similar computation shows that
\begin{equation}\label{eqn:phi-p-expression}
	\varphi_{p} \phi(e_{\rho} \infty) = 
		\left(\sum_{\sigma \in G} \rho(\sigma^{-1}) \log_{p} \left( j(\xi_{n}^{\sigma})\right)\right) e_{\rho} \infty. 
\end{equation}
Using equations \eqref{eqn:phi-infinity-expression} and \eqref{eqn:phi-p-expression} and Lemma \ref{lem:psi-equal-to-phi} we obtain
\begin{equation} \label{eqn:Omega-expression} 
	\Omega_{j}(\rho) = \Psi_{j}(\rho) = \frac{\sum_{\sigma \in G} \rho(\sigma^{-1}) \log_{p} \left( j(\xi_{n}^{\sigma})\right)}		
					{\sum_{\sigma \in G} \rho(\sigma^{-1}) j\left(\log \xi_{n}^{\sigma}\right)}.
\end{equation}

We identify $\Gal(\Q(\zeta_{n}) / \Q)$ with $(\Z / n\Z)^{\times}$
in the usual way and view $\rho$ as an even $\C_{p}$-valued Dirichlet character $(\Z / n\Z)^{\times} \rightarrow \C_{p}^{\times}$. 
Let $L_{p}(s,\rho)$ be the (non-truncated) Kubota-Leopoldt $p$-adic $L$-function attached to $\rho$.
Let $\chi := \rho^{j^{-1}} : (\Z / n\Z)^{\times} \rightarrow \C^{\times}$ 
be the classical even Dirichlet character corresponding to $\rho$ via $j$, let $\tau(\chi)$ be its Gauss sum and 
let $L(s,\chi)$ be the (non-truncated) Dirichlet $L$-function attached to $\chi$.
Let $\check \rho$ and $\check \chi$ be the contragredient characters of $\rho$ and $\chi$, respectively.
Finally, let $\Sigma$ be the set containing $p$ and the unique infinite place of $\Q$.

We now compute the denominator of the right-hand side in \eqref{eqn:Omega-expression} (justifications below):
\begin{eqnarray}
	\sum_{\sigma \in G} \rho(\sigma^{-1}) j\left(\log \xi_{n}^{\sigma}\right)
		& = & \sum_{\sigma \in G} \rho(\sigma^{-1}) j\left(\sum_{I} \log |(1 - \zeta_{n}^{n_{I}})^{2 \sigma}| \right) \nonumber \\
	& = & \frac{1}{2} \sum_{a \in (\Z / n\Z)^{\times}} \check \rho(a) j\left(\sum_{I} \log |(1 - \zeta_{n}^{a n_{I}})^{2}| \right) \nonumber\\
	& = & j\left( \sum_{a \in (\Z / n\Z)^{\times}} \check \chi(a) \sum_{I} \log |1 - \zeta_{n}^{a n_{I}}| \right) \nonumber\\
	& = & j\left( \sum_{a \in (\Z / n\Z)^{\times}} \check \chi(a) \log |1 - \zeta_{n}^{a}| \right) \label{eqn:num-omega-expression} \\
	& = & - j\left( \frac{n}{\tau(\chi)} L(1, \chi)\right) \nonumber \\
	& = & - j\left(\frac{n}{\tau(\chi)} L_{\Sigma}(1, \chi)\right) \left(1 - \frac{\rho(p)}{p}\right)^{-1}. \nonumber
\end{eqnarray}
Here, the first equality holds because $\xi_{n} = \prod_{I}(-\zeta_{n}^{-n_{I}})(1 - \zeta_{n}^{n_{I}})^{2}$, the fourth equality follows from
\cite[Lemma 8.4]{MR1421575} and the fifth from \cite[Theorem 4.9]{MR1421575}. A similar computation using
\cite[Theorem 5.18]{MR1421575} shows that we have
\begin{equation}\label{eq:denom-omega-expression}
	\sum_{\sigma \in G} \rho(\sigma^{-1}) \log_{p} \left( j(\xi_{n}^{\sigma})\right) 
	= - j\left(\frac{n}{\tau(\chi)}\right) L_{p}(1, \rho) \left(1 - \frac{\rho(p)}{p}\right)^{-1}. 
\end{equation}
Moreover, $L(1,\chi) \neq 0$ by \cite[Corollary 4.4]{MR1421575} and $L_{p}(1, \rho) \neq 0$ by
 \cite[Corollary 5.30]{MR1421575}.
Hence $L_{\Sigma}(1,\chi)=L_{\Sigma}^{*}(1,\chi)$ and  $L_{p}(1, \rho) = L_{p}^{*}(1, \rho)$. 
The equality \eqref{eq:p-adic-Stark-at-s=1-eq} for the choice of $\Sigma$ above
now follows by substituting equalities \eqref{eqn:num-omega-expression} and \eqref{eq:denom-omega-expression} into \eqref{eqn:Omega-expression}.
Finally, by Remark \ref{rmk:indep-choice-of-sigma} we obtain the desired result for any choice of $\Sigma$.
\end{proof}

\section{The ETNC and the Equivariant Iwasawa Main Conjecture}

\subsection{Algebraic $K$-theory}
For a left noetherian ring $\Lambda$ we write
$K_{0}(\Lambda)$ for the Grothendieck group of the category of finitely generated 
projective $\Lambda$-modules 
(see \cite[\S 38]{MR892316}) and $K_{1}(\Lambda)$ for the 
Whitehead group (see \cite[\S 40]{MR892316}).
Moreover, we denote the relative algebraic $K$-group associated to a ring homomorphism 
$\Lambda \rightarrow \Lambda'$ by $K_{0}(\Lambda, \Lambda')$.
We recall that $K_{0}(\Lambda, \Lambda')$ is an abelian group with generators $[X,g,Y]$ where
$X$ and $Y$ are finitely generated projective $\Lambda$-modules
and $g:\Lambda' \otimes_{\Lambda} X \rightarrow \Lambda' \otimes_{\Lambda} Y$ 
is an isomorphism of $\Lambda'$-modules;
for a full description in terms of generators and relations, see \cite[p.\ 215]{MR0245634}.
Furthermore, there is a long exact sequence of relative $K$-theory
\begin{equation}\label{eq:long-exact-seq}
K_{1}(\Lambda) \longrightarrow K_{1}(\Lambda') \stackrel{\partial}{\longrightarrow} K_{0}(\Lambda,\Lambda')
\longrightarrow K_{0}(\Lambda) \longrightarrow K_{0}(\Lambda')
\end{equation}
(see  \cite[Chapter 15]{MR0245634}).

Let $R$ be a noetherian integral domain of characteristic $0$, let $E$ be any extension of 
the field of fractions of $R$, and let $G$ be a finite group. We then write
$K_{0}(R[G],E)$ for the relative algebraic $K$-group associated 
to the ring homomorphism $R[G] \hookrightarrow E[G]$ and write $K_{0}(R[G],E)_{\tors}$ for its torsion subgroup.
If $H$ is a subgroup of $G$ then the inclusion map $R[H] \hookrightarrow R[G]$ induces canonical 
restriction and induction maps
\[
\res^{G}_{H}: K_{0}(R[G],E) \longrightarrow K_{0}(R[H],E), \quad
\ind^{G}_{H}: K_{0}(R[H],E) \longrightarrow K_{0}(R[G],E).
\]
Moreover, if $N$ is a normal subgroup of $G$ then the quotient map $R[G] \twoheadrightarrow R[G/N]$
induces a canonical quotient map
\[
\quot^G_{G/N}: K_{0}(R[G],E) \longrightarrow K_{0}(R[G/N],E).
\]
Finally, the maps $K_{0}(\Z[G],\Q) \rightarrow K_{0}(\Z_{p}[G],\Q_{p})$ 
induce a canonical isomorphism
\begin{equation}\label{eq:p-part-decomposition}
K_{0}(\Z[G],\Q) \cong \bigoplus_{p} \, K_{0}(\Z_{p}[G],\Q_{p})
\end{equation}
where $p$ ranges over all primes (see the discussion following \cite[(49.12)]{MR892316}).

\subsection{The equivariant Tamagawa number conjecture (ETNC)}\label{subsec:ETNC-overview}
We give a very brief description of the statement and properties of the equivariant Tamagawa number conjecture (ETNC) for Tate motives formulated by Burns and Flach \cite{MR1884523}; we
omit all details except those necessary for proofs in later sections.

Let $L/K$ be a finite Galois extension of number fields and let $G=\Gal(L/K)$.
For each integer $r$ we set $\Q(r)_{L}:=h^{0}(\mathrm{Spec}(L))(r)$,
which we regard as a motive defined over $K$ and with coefficients in the semisimple algebra $\Q[G]$.
The conjecture `$\ETNC(\Q(r)_{L}, \Z[G])$' formulated in
\cite[Conjecture 4(iv)]{MR1884523}
for the pair $(\Q(r)_{L}, \Z[G])$ asserts
that a certain canonical element $T\Omega(\Q(r)_{L}, \Z[G])$  of $K_{0}(\Z[G],\R)$ vanishes.
(As observed in \cite[\S 1]{MR1981031}, the element $T\Omega(\Q(r)_{L}, \Z[G])$ 
is indeed well-defined.)
We define the following notation.
\begin{itemize}
\item $\ETNC(L/K,r)$ means $T\Omega(\Q(r)_{L}, \Z[G])=0$.
\item $\ETNC^{\tors}(L/K,r)$ means $T\Omega(\Q(r)_{L}, \Z[G]) \in K_{0}(\Z[G],\Q)_{\tors}$.
\item $\ETNC^{\rat}(L/K,r)$ means $T\Omega(\Q(r)_{L}, \Z[G]) \in K_{0}(\Z[G],\Q)$. 
\end{itemize}
Thus if $\ETNC^{\rat}(L/K,r)$ holds then by \eqref{eq:p-part-decomposition} 
we have elements $T\Omega(\Q(r)_{L}, \Z[G])_{p}$ in $K_{0}(\Z_{p}[G],\Q_{p})$ for each prime $p$.
In this situation, we define the following notation.
\begin{itemize}
\item $\ETNC_{p}(L/K,r)$ means $T\Omega(\Q(r)_{L}, \Z[G])_{p}=0$.
\item $\ETNC_{p}^{\tors}(L/K,r)$ means $T\Omega(\Q(r)_{L}, \Z[G])_{p} \in K_{0}(\Z_{p}[G],\Q_{p})_{\tors}$.
\end{itemize}

We now observe that several conjectures encountered thus far are in fact equivalent.

\begin{prop}\label{prop:rationality}
Let $L/K$ be a finite Galois extension of number fields and let $G=\Gal(L/K)$.
Then the following are equivalent:
\begin{enumerate}
\item $\ETNC^{\rat}(L/K,0)$; \tabto{4.5cm} \emph{(ii)} $\ETNC^{\rat}(L/K,1)$;
\setcounter{enumi}{2}
\item Stark's conjecture at $s=0$ for every $\chi \in R_{\C}^+(G)$;
\item Stark's conjecture at $s=1$ for every $\chi \in R_{\C}^+(G)$.
\end{enumerate}
\end{prop}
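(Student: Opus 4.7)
The plan is to establish three equivalences: (iii)$\Leftrightarrow$(iv), (i)$\Leftrightarrow$(iii), and (ii)$\Leftrightarrow$(iv); the remaining equivalence (i)$\Leftrightarrow$(ii) then follows immediately. The case-splitting is natural because (iii)$\Leftrightarrow$(iv) is a purely analytic statement at the level of characters, while (i)$\Leftrightarrow$(iii) and (ii)$\Leftrightarrow$(iv) are of $K$-theoretic nature and ultimately rest on an explicit character-by-character description of $T\Omega(\Q(r)_L,\Z[G])$.

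The equivalence (iii)$\Leftrightarrow$(iv) is essentially immediate from Remark~\ref{rmk:reformulation-stark-1}: by Tate's Th\'eor\`eme 8.4 of \cite{MR782485}, Stark's conjecture at $s=0$ for a character $\chi$ is equivalent to Stark's conjecture at $s=1$ for $\chi$ (via the functional equation of the Artin $L$-function attached to $\chi$). Since each of (iii) and (iv) quantifies over all $\chi \in R_{\C}^+(G)$, the equivalence follows.

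For (i)$\Leftrightarrow$(iii) (the argument for (ii)$\Leftrightarrow$(iv) being parallel), I would proceed as follows. Recall that $T\Omega(\Q(r)_L, \Z[G]) \in K_0(\Z[G],\R)$ is defined in terms of the leading term at $s=r$ of the equivariant Artin $L$-function, together with a motivic-cohomological regulator that for $r=0$ is the Dirichlet regulator and for $r=1$ is the map $\mu_L$ from Section~3. Using the long exact sequence \eqref{eq:long-exact-seq} for the inclusions $\Z[G] \hookrightarrow \Q[G] \hookrightarrow \R[G]$, the element $T\Omega$ lies in the image of $K_0(\Z[G],\Q)$ if and only if its image under the natural map to $K_0(\Q[G],\R)$ vanishes; equivalently, if and only if a lift of $T\Omega$ to $K_1(\R[G])$ lies in the image of $K_1(\Q[G])$. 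Via the reduced-norm identification $K_1(\R[G]) \hookrightarrow \prod_{\chi} \C^\times$ (compatibly with the $\Aut(\C)$-action), the lift of $T\Omega$ is described in each $\chi$-component by the ratio $L_\Sigma^*(r,\chi)/R_r(\chi,g)$ (up to a non-zero rational number depending on choices). Rationality of the tuple modulo $\partial(K_1(\Q[G]))$ is then exactly the Galois-equivariance condition $\sigma(L_\Sigma^*(r,\chi)/R_r(\chi,g)) = L_\Sigma^*(r,\chi^\sigma)/R_r(\chi^\sigma,g)$ for every $\sigma \in \Aut(\C)$, which is the statement of Stark's conjecture at $s=r$.

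The main obstacle will be the careful bookkeeping in translating the abstract $K$-theoretic definition of $T\Omega(\Q(r)_L,\Z[G])$ into concrete $L$-value-over-regulator tuples. One must check that the dependence on auxiliary choices (the set $\Sigma$, the isomorphism $g:L^0 \xrightarrow{\sim} \Q \otimes_{\Z} \log_\infty(\mathcal{O}_L^\times)$, the embeddings $\sigma_w$) cancels consistently; this is possible because $T\Omega$ is canonical and Stark's conjecture is independent of these choices (see Remark~\ref{rmk:stark-1-independent-of-choices} and its $s=0$ analogue). This character-by-character description is essentially the content of Burns's work \cite{MR1863302} showing that the LTC at $s=0$ refines the strong Stark conjecture, together with the analogous analysis of Breuning and Burns \cite{MR2371375} at $s=1$; crucially, only the rationality part is needed here, so neither Leopoldt's conjecture nor any $\mu$-invariant vanishing enters.
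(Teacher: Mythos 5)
Your decomposition differs from the paper's. The paper proves (iii)$\Leftrightarrow$(iv) via Tate, (i)$\Leftrightarrow$(ii) via the ETNC functional-equation compatibility of Burns--Flach (\cite[Theorem 5.2]{MR1884523}), and (i)$\Leftrightarrow$(iii) via \cite[Corollary 1]{MR1981031}; you instead prove (iii)$\Leftrightarrow$(iv), (i)$\Leftrightarrow$(iii), and (ii)$\Leftrightarrow$(iv), deducing (i)$\Leftrightarrow$(ii) as a consequence. The legs (iii)$\Leftrightarrow$(iv) and (i)$\Leftrightarrow$(iii) are fine, and the latter is precisely the content of Burns--Flach's Corollary 1.

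The problematic leg is (ii)$\Leftrightarrow$(iv). You take the ETNC element $T\Omega(\Q(1)_{L},\Z[G])$ and assert that its regulator component is the map $\mu_{L}$ from \S 3 and that, character by character, it is described by $L_{\Sigma}^{*}(1,\chi)/R_{1}(\chi,g)$. But $\mu_{L}$ (and the resulting $R_{1}$-description) is the regulator governing the \emph{leading term conjecture} element $T\Omega(L/K,1)$ of Breuning--Burns \cite{MR2371375}, not the Burns--Flach ETNC element. The two elements of $K_{0}(\Z[G],\R)$ are defined quite differently: the ETNC element is built out of motivic cohomology of $h^{0}(\Spec L)(1)$ via the Burns--Flach machinery, while the LTC element is built out of the explicit $\mu_{L}$-regulator. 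Their agreement in $K_{0}(\Z[G],\R)$ is the nontrivial content of \cite[Proposition 5.1]{MR2804251}, which requires Leopoldt's conjecture. Even if you only need agreement modulo rationals (which is all that matters for comparing the two rationality statements), this is not obvious, and in the present paper the equivalence $\ETNC^{\rat}(L/K,1) \Leftrightarrow \LTC^{\rat}(L/K,1)$ (established in Proposition/Definition 9.5) is itself derived \emph{through} the proposition you are trying to prove, so invoking it here would be circular. The paper sidesteps this entirely: (i)$\Leftrightarrow$(ii) is a formal consequence of the ETNC's compatibility with the functional equation, a purely motivic/algebraic statement that never requires a character-level description of the $s=1$ ETNC element. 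Your argument as written does not close this gap.
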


\begin{proof}
As already observed in Remark \ref{rmk:reformulation-stark-1}, (iii) and (iv) are equivalent by \cite[Chapitre I, Th\'eor\`eme 8.4]{MR782485}.
The equivalence of (i) and (ii) follows from \cite[Theorem 5.2]{MR1884523}.
Finally, \cite[Corollary 1]{MR1981031} gives the equivalence of (i) and (iii).
\end{proof}

\subsection{Reduction steps for the ETNC}\label{subsec:ETNC-reduction-steps}

A Brauer induction argument shows that to prove either $\ETNC^{\rat}(L/K,r)$ or $\ETNC^{\tors}(L/K,r)$
it suffices to consider certain cyclic sub-extensions of $L/K$.
Now fix a prime $p$. Burns \cite[Theorem 4.1]{MR2076565} showed that
\begin{equation}\label{eq:tors-intersection}
K_{0}(\Z_{p}[G],\Q_{p})_{\tors} = \bigcap \ker (\quot^{H}_{Q} \circ \res^{G}_{H}: K_{0}(\Z_{p}[G],\Q_{p}) \longrightarrow K_{0}(\Z_{p}[Q],\Q_{p})), 
\end{equation}
where the intersection runs over all cyclic subgroups $H$ of $G$ and over all quotients $Q$ of $H$ that are of order prime to $p$.
If we assume $\ETNC^{\rat}(L/K,r)$ then \eqref{eq:tors-intersection} and 
the functorial properties of $T\Omega(\Q(r)_L,\Z[G])$ with respect to restriction 
and quotient maps (see \cite[Theorem 4.1]{MR1884523}) together show that to prove
$\ETNC^{\tors}_{p}(L/K,r)$,
it suffices to consider cyclic sub-extensions of $L/K$ of degree prime to $p$.
Moreover, if we assume $\ETNC^{\tors}_{p}(L/K,r)$ then \cite[Proposition 9]{MR1687551} and functoriality show that 
$\ETNC_{p}(L/K,r)$ reduces to certain $p$-elementary Galois sub-extensions of $L/K$
(a finite group is $p$-elementary if it is isomorphic to $C_{m} \times P$ for some $p$-group $P$ and some $m \in \N$ with $p \nmid m$).
Therefore $\ETNC_{p}(L/K,r)$ reduces to the case of abelian (resp.\ cyclic) sub-extensions if $G$ has an abelian
(resp.\ cyclic) Sylow $p$-subgroup.
However, $\ETNC_{p}(L/K,r)$ does not reduce to abelian sub-extensions via functoriality in general.
For example, using the algorithm of Bley and Wilson \cite{MR2564571} (implemented by Bley using \textsc{Magma} \cite{MR1484478}),
one can show that if $G$ is the Heisenberg group of order $27$ then $K_{0}(\Z_{p}[G],\Q_{p})_{\tors}$ has exponent $18$
whereas if $Q$ is any proper subquotient of $G$ then $K_{0}(\Z_{p}[Q],\Q_{p})_{\tors}$ has exponent dividing $6$.
Thus if $Q$ runs over all proper subquotients of $G$ then the map $\prod \quot_{Q}^{H} \circ \res_{H}^{G}$ cannot be injective on
$K_{0}(\Z_{p}[G],\Q_{p})_{\tors}$. 

\subsection{The equivariant Iwasawa main conjecture}\label{subsec:EIMC}
Let $E/F$ be a finite Galois extension of totally real fields and let $G=\Gal(E/F)$.
Fix an odd prime $p$.
Let $E^{\cyc}$ be the cyclotomic $\Z_{p}$-extension of $E$ and let 
$\Sigma$ be a finite set of places of $F$ containing $\Sigma_{\mathrm{ram}}(E/F) \cup \Sigma_{p}(F) \cup \Sigma_{\infty}(F)$ where
 $\Sigma_{\mathrm{ram}}(E/F)$ denotes the set of places of $F$ ramified in $E/F$.
Let $M_{\Sigma}$ be the maximal abelian pro-$p$-extension of $E^{\cyc}$
unramified outside $\Sigma$.
Let $X_{\Sigma}=\Gal(M_{\Sigma}/E^{\cyc})$.
Then $\mathcal{G} := \Gal(E^{\cyc}/F)$ acts on $X_{\Sigma}$ by 
$g \cdot x = \tilde{g}x\tilde{g}^{-1}$ where $g \in \mathcal{G}$ and $\tilde{g}$ is any lift of $g$
to $\Gal(M_{\Sigma}/F)$. Thus $X_{\Sigma}$ is a $\Lambda(\mathcal{G})$-module, where
$\Lambda(\mathcal{G}) := \Z_p \llbracket \mathcal{G} \rrbracket$ denotes the Iwasawa algebra
of $\mathcal{G}$ over $\Z_p$.
We let $\mathcal{Q}(\mathcal{G})$ denote the total ring of fractions of $\Lambda(\mathcal{G})$, 
obtained by adjoining the inverses of all central regular elements.

Since $E$ is totally real, a result of Iwasawa \cite{MR0349627} shows that 
$X_{\Sigma}$ is finitely generated and torsion as a $\Z_{p}\llbracket \Gamma \rrbracket$-module,
where $\Gamma = \Gamma_{E} := \Gal(E^{\cyc}/E) \cong \Z_{p}$. 
We let $\mu_{p}(E)$ denote the Iwasawa $\mu$-invariant of $X_{\Sigma}$ and note that
this does not depend on the choice of $\Sigma$ (see \cite[Corollary 11.3.6]{MR2392026}).
Thus $\mu_{p}(E)=0$ if and only if $X_{\Sigma}$ is finitely generated as a $\Z_{p}$-module.
It is conjectured that we always have $\mu_{p}(E)=0$ and as explained in \cite[Remark 4.3]{MR3749195}, 
this is closely related to the classical Iwasawa `$\mu=0$' conjecture
for $E(\zeta_{p})$ at $p$. Thus a result of Ferrero and Washington \cite{MR528968} on this latter conjecture implies 
that $\mu_{p}(E)=0$ whenever $E/\Q$ is abelian.

We now consider the canonical complex
\[
    C_{\Sigma}^{\bullet}(E^{\cyc}/F) := R\Hom(R\Gamma_{\et}(\Spec(\mathcal{O}_{E^{\cyc}, \Sigma}), \Q_{p} / \Z_{p}), \Q_{p} / \Z_{p}).
\]
Here, $\mathcal{O}_{E^{\cyc}, \Sigma}$ denotes the ring of integers $\mathcal{O}_{E^{\cyc}}$ 
in $E^{\cyc}$ localised at all primes above those in $\Sigma$ and
$\Q_{p} / \Z_{p}$ denotes the constant sheaf of the abelian group $\Q_{p} / \Z_{p}$ 
on the \'{e}tale site
of $\Spec(\mathcal{O}_{E^{\cyc}, \Sigma})$.
The only non-trivial cohomology groups occur in degree $-1$ and $0$ and we have
\[
H^{-1}(C_{\Sigma}^{\bullet}(E^{\cyc}/F)) \cong X_{\Sigma}, \qquad H^{0}(C_{\Sigma}^{\bullet}(E^{\cyc}/F)) \cong \Z_{p}.
\]
It follows from \cite[Proposition 1.6.5]{MR2276851} that $C_{\Sigma}^{\bullet}(E^{\cyc}/F)$ 
is a perfect complex of $\Lambda(\mathcal{G})$-modules.
In particular, $C_{\Sigma}^{\bullet}(E^{\cyc}/F)$ defines a class 
$[C_{\Sigma}^{\bullet}(E^{\cyc}/F)]$ in $K_{0}(\Lambda(\mathcal{G}), \mathcal{Q}(\mathcal{G}))$
(see \cite[\S 2]{MR3068893}, for example).
Note that $C_{\Sigma}^{\bullet}(E^{\cyc}/F)$ and the complex used
by Ritter and Weiss (as constructed in \cite{MR2114937}) become isomorphic in 
the derived category of $\Lambda(\mathcal{G})$-modules by
\cite[Theorem 2.4]{MR3072281} (see also \cite{MR3068897} for more on this topic).
Hence it makes no essential difference which of these complexes we use in the following.

Let $\chi_{\mathrm{cyc}}$ be the $p$-adic cyclotomic character
\[
\chi_{\mathrm{cyc}}: \Gal(E^{\cyc}(\zeta_p)/F) \longrightarrow \Z_{p}^{\times},
\]
defined by $\sigma(\zeta) = \zeta^{\chi_{\mathrm{cyc}}(\sigma)}$ for any $\sigma \in \Gal(E^{\cyc}(\zeta_p)/F)$ 
and any $p$-power root of unity $\zeta$.
Let $\omega$ and $\kappa$ denote the composition of $\chi_{\mathrm{cyc}}$ with the projections 
onto the first and second factors of the canonical decomposition
$\Z_{p}^{\times} = \langle \zeta_{p-1} \rangle \times (1+p\Z_{p})$, respectively;
thus $\omega$ is the Teichm\"{u}ller character.
We note that $\kappa$ factors through both $\mathcal{G}$ and $\Gamma_{F} := \Gal(F^{\cyc}/F)$;
by abuse of notation we also use $\kappa$ to denote the maps with either of these domains.

Now let $\pi_{\rho}: \mathcal{G} \rightarrow \GL_{n}(\mathcal{O})$ be an 
Artin representation (i.e.\ $\pi_{\rho}$ is continuous and has finite image) with character $\rho$, where $\mathcal{O}$ denotes
the ring of integers in some finite extension $L$ of $\Q_{p}$.
Let $\overline{\Q}_{p}$ denote an algebraic closure of $\Q_{p}$.
Choose a topological generator
$\gamma_{F}$ of $\Gamma_{F}$ and put $u := \kappa(\gamma_{F})$.
Each such choice permits the definition of a power series
$G_{\rho, \Sigma}(T) \in \overline{\Q}_{p} \otimes_{\Q_p} Quot(\Z_{p} \llbracket T \rrbracket)$
such that for every $s \in \Z_{p}$ we have
\begin{equation} \label{eq:L_p-power-series}
L_{p, \Sigma}(1-s, \rho) = \frac{G_{\rho, \Sigma}(u^{s}-1)}{H_{\rho}(u^{s}-1)},
\end{equation}
where, for irreducible $\rho$, we have
\[
H_{\rho}(T) = \left\{\begin{array}{ll} \rho(\gamma_{F})(1+T)-1 & \mbox{ if } \Gal(E^{\cyc}/ F^{\cyc}) \subset \ker \rho\\
1 & \mbox{ otherwise.}  \end{array}\right.
\]

We recall the following construction from \cite[\S 3]{MR2217048}. 
For $g \in \mathcal{G}$ we write $\overline{g}$ for its image under the canonical
projection $\mathcal{G} \rightarrow \Gamma_{F}$ and let $\mathcal{Q}^{L}(\Gamma_{F}) := L \otimes_{\Q_{p}} \mathcal{Q}(\Gamma_{F})$.
Each continuous representation $\pi: \mathcal{G} \rightarrow \GL_{n}(\mathcal{O})$ gives rise to a 
continuous group homomorphism $\mathcal{G} \rightarrow \GL_{n}(\mathcal{O} \otimes_{\Z_{p}} \Lambda(\Gamma_{F}))$
defined by $g \mapsto \pi(g) \overline{g}$.
This extends to a ring homomorphism $\mathcal{Q}(\mathcal{G}) \rightarrow M_{n}(\mathcal{Q}^{L}(\Gamma_{F}))$
which in turn induces a homomorphism of abelian groups
\[
	\Phi_{\pi}: K_{1}(\mathcal{Q}(\mathcal{G})) \longrightarrow
	K_{1}(M_{n}(\mathcal{Q}^{L}(\Gamma_{F}))) \cong K_{1}(\mathcal{Q}^L(\Gamma_{F}))
	\cong \mathcal{Q}^L(\Gamma_F)^{\times} \cong Quot(\mathcal{O}\llbracket T \rrbracket)^{\times}.
\]
Here the first isomorphism is induced by Morita equivalence,
the second by taking determinants and the third by using \cite[Lemma 4]{MR2114937} and mapping
$\gamma_{F}$ to $1+T$.  
We define an evaluation map
\begin{eqnarray*}
\phi: Quot(\mathcal{O}\llbracket T \rrbracket) & \rightarrow & L \cup \{\infty\}\\
f(T) & \mapsto & f(0).
\end{eqnarray*}
If $\zeta$ is an element of $K_1(\mathcal{Q}(\mathcal{G}))$ we define
$\zeta(\pi) := \phi(\Phi_{\pi}(\zeta))$.

The following is a formulation of the equivariant Iwasawa main conjecture without its uniqueness statement
(we assume the hypotheses and notation above).

\begin{conj}[equivariant Iwasawa main conjecture] \label{conj:EIMC}
There exists an element $\zeta_{\Sigma} \in K_{1}(\mathcal{Q}(\mathcal{G}))$ such that
$\partial(\zeta_{\Sigma}) = - [C_{\Sigma}^{\bullet}(E^{\cyc}/F)]$ and 
for every irreducible Artin representation $\pi_{\rho}$ of $\mathcal{G}$ with character
$\rho$ and for each integer $r \geq 1$ divisible by $p-1$ we have
\begin{equation}\label{eqn:EIMC-eqn}
  \zeta_{\Sigma}(\pi_{\rho} \kappa^{r}) = L_{p, \Sigma}(1-r, \rho) = j\left(L_{\Sigma}(1-r, \rho^{j^{-1}})\right) 
\end{equation}
for every field isomorphism $j: \C \cong \C_p$.
\end{conj}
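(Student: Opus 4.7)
The plan is to follow the strategy developed by Burns and Kato and completed independently by Kakde and by Ritter-Weiss. The basic idea is to construct $\zeta_{\Sigma}$ from abelian $p$-adic $L$-functions together with an algebraic ``characterization by congruences'' of the image of $K_{1}(\Lambda(\mathcal{G}))$ in a suitable product of $K_{1}$-groups over abelian subquotients of $\mathcal{G}$.

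First I would reduce to the case in which $\mathcal{G}$ contains an open normal pro-$p$ subgroup of index prime to $p$, using functoriality of the localization sequence \eqref{eq:long-exact-seq} together with the standard restriction/induction formalism between $K_{1}(\mathcal{Q}(\mathcal{G}))$ and $K_{1}(\mathcal{Q}(\mathcal{U}))$ for open subgroups $\mathcal{U} \leq \mathcal{G}$. For each open subgroup $\mathcal{V}$ of $\mathcal{G}$, the theorem of Deligne-Ribet, Barsky and Cassou-Nogu\`es produces an abelian $p$-adic $L$-element in $K_{1}(\mathcal{Q}(\mathcal{V}^{\ab}))$ satisfying the analogue of \eqref{eqn:EIMC-eqn} for linear characters. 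These abelian data should then be assembled into a single $\zeta_{\Sigma} \in K_{1}(\mathcal{Q}(\mathcal{G}))$ via an integral logarithm map in the spirit of Oliver and Taylor, whose kernel and image are characterized by explicit norm, trace, and congruence relations among the $L$-elements of the various $\mathcal{V}^{\ab}$.

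Once $\zeta_{\Sigma}$ is constructed, the identity $\partial(\zeta_{\Sigma}) = -[C_{\Sigma}^{\bullet}(E^{\cyc}/F)]$ would be verified by functoriality of $\partial$ with respect to the maps $\mathcal{G} \twoheadrightarrow \mathcal{V}^{\ab}$, reducing the statement to the classical abelian Iwasawa main conjecture proved by Wiles together with the identification of $H^{-1}(C_{\Sigma}^{\bullet}(E^{\cyc}/F))^{\mathcal{V}^{\ab}}$ as the relevant Iwasawa module whose characteristic ideal is given by the abelian $p$-adic $L$-function. The interpolation identity \eqref{eqn:EIMC-eqn} would then be reduced via Brauer induction to the case of linear characters on abelian subquotients, where it becomes precisely the Deligne-Ribet interpolation formula; both sides behave well under induction, inflation and addition, so the reduction is clean.

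The main obstacle I expect is the verification that the tuple of abelian $p$-adic $L$-functions coming from Deligne-Ribet actually satisfies the explicit congruences characterizing the image of the integral logarithm on $K_{1}(\Lambda(\mathcal{G}))$. This is the technical heart of both the Kakde and Ritter-Weiss proofs and requires delicate control of $q$-expansions of Deligne-Ribet's Hilbert modular forms modulo arbitrarily high powers of $p$. It is also precisely here that the vanishing of the Iwasawa $\mu$-invariant $\mu_{p}(E)$ enters essentially, as it is needed to ensure that the constructed element actually lies in the appropriate localized $K_{1}$ rather than merely in a completion.
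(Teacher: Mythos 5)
The statement you were asked to prove is labelled \emph{Conjecture} \ref{conj:EIMC} in the paper, and the paper does not (and cannot) supply an unconditional proof of it: the EIMC in this generality is still open. What the paper instead records, as Theorem \ref{thm:EIMC-with-mu}, is that the conjecture holds \emph{under the hypothesis} $\mu_{p}(E)=0$, citing the independent proofs of Ritter--Weiss and Kakde; the paper itself offers no argument, only these citations.

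Your sketch is a faithful high-level description of exactly that Ritter--Weiss/Kakde strategy: reduce to one-dimensional (pro-$p$ plus prime-to-$p$) situations, take the Deligne--Ribet/Barsky/Cassou-Nogu\`es abelian $p$-adic $L$-elements, and patch them to an element of $K_{1}(\mathcal{Q}(\mathcal{G}))$ using an Oliver--Taylor-type integral logarithm and a system of explicit norm/trace/congruence conditions characterising the image of $K_{1}$, with $\partial(\zeta_{\Sigma})=-[C_{\Sigma}^{\bullet}(E^{\cyc}/F)]$ checked via Wiles' abelian main conjecture. You even flag, correctly, that $\mu_{p}(E)=0$ enters essentially at the congruence/patching step. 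But that observation is precisely the point that separates what you have sketched from what you were asked to prove: with $\mu_{p}(E)=0$ built in, you are proving Theorem \ref{thm:EIMC-with-mu}, not Conjecture \ref{conj:EIMC} itself. There is no known way (and the paper claims none) to remove this hypothesis, so an unconditional proof of the conjecture along these lines does not exist at present. The writeup should therefore be reframed: either state explicitly that you are proving the conditional Theorem \ref{thm:EIMC-with-mu}, or acknowledge that the conjecture as stated remains open and that the congruence verification is exactly where the argument would break down without a $\mu$-vanishing input.
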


\begin{remark}
It can be shown that the validity of Conjecture \ref{conj:EIMC} is independent of the choice of $\Sigma$, 
provided that $\Sigma$ is finite and contains $\Sigma_{\mathrm{ram}}(E/F) \cup \Sigma_{p}(F) \cup \Sigma_{\infty}(F)$.
\end{remark}

The following theorem has been shown by Ritter and Weiss \cite{MR2813337} and by Kakde \cite{MR3091976} independently.

\begin{theorem}\label{thm:EIMC-with-mu}
If $\mu_{p}(E)=0$ then Conjecture \ref{conj:EIMC} holds for $E^{\cyc}/F$.
\end{theorem}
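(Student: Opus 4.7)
The plan is to follow the by-now-standard strategy of reducing the non-abelian equivariant main conjecture to the abelian main conjecture of Wiles together with a compatibility check between the resulting abelian $p$-adic $L$-elements across different abelian subquotients of $\mathcal{G}$. Concretely, the first step is to reduce to pro-$p$ Iwasawa algebras: by a Galois descent argument (using that the problem splits along blocks of $\Lambda(\mathcal{G})$), one may assume that $\mathcal{G}$ is a pro-$p$, one-dimensional $p$-adic Lie group, so that $\mathcal{G} = H \rtimes \Gamma_{F_{\infty}}$ for an open central $\Gamma_{F_{\infty}} \leq \Gamma_{F}$ and a finite $p$-group $H$. In this setting one has a clean $K$-theoretic description of $K_{1}(\Lambda(\mathcal{G}))$ and $K_{1}(\mathcal{Q}(\mathcal{G}))$ via an integral logarithm.

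The second step is to invoke Wiles' abelian main conjecture. For every open subgroup $U \leq \mathcal{G}$ one considers the abelianisation $U^{\mathrm{ab}}$ and the corresponding intermediate totally real extension of $F$; Wiles (together with the vanishing of the corresponding $\mu$-invariant, which follows from the hypothesis $\mu_{p}(E)=0$ since $\mu_{p}$ does not grow in subextensions) produces an element $\zeta_{U}^{\mathrm{ab}} \in K_{1}(\mathcal{Q}(\Lambda(U^{\mathrm{ab}})))$ whose boundary class equals the image in $K_{0}(\Lambda(U^{\mathrm{ab}}),\mathcal{Q}(\Lambda(U^{\mathrm{ab}})))$ of the corresponding abelian arithmetic complex, and which interpolates the correct values $L_{p,\Sigma}(1-r,\rho)$ for all characters $\rho$ of $U^{\mathrm{ab}}$. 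The goal is to patch the family $\{\zeta_{U}^{\mathrm{ab}}\}$ into a single element $\zeta_{\Sigma} \in K_{1}(\mathcal{Q}(\mathcal{G}))$.

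The third step, which is the heart of the matter, is to characterise the image of the natural map
\[
K_{1}(\mathcal{Q}(\mathcal{G})) \longrightarrow \prod_{U} K_{1}(\mathcal{Q}(\Lambda(U^{\mathrm{ab}})))
\]
as precisely those tuples satisfying a finite explicit list of congruences modulo trace ideals (the Kakde / Ritter--Weiss congruences). Given such a description, one must check that the tuple $(\zeta_{U}^{\mathrm{ab}})_{U}$ coming from Wiles does satisfy these congruences. By the uniqueness of the $p$-adic $L$-functions at the abelian level, this reduces to proving congruences between values of Deligne--Ribet $p$-adic $L$-functions attached to different abelian quotients; these are obtained by applying the Deligne--Ribet $q$-expansion principle to carefully chosen Hilbert modular Eisenstein series, which furnishes the needed integrality and congruence statements for the corresponding Stickelberger-type elements.

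The final step is to verify that the patched element $\zeta_{\Sigma}$ satisfies $\partial(\zeta_{\Sigma}) = -[C_{\Sigma}^{\bullet}(E^{\cyc}/F)]$ and the interpolation identity \eqref{eqn:EIMC-eqn} for all irreducible Artin characters of $\mathcal{G}$. The boundary identity is reduced to the abelian case via the functoriality of $C_{\Sigma}^{\bullet}$ under quotient by $[U,U]$; the interpolation identity is obtained from Brauer induction plus the abelian interpolation built into Wiles' theorem, after identifying $\zeta_{\Sigma}(\pi_{\rho}\kappa^{r})$ with an abelian analogue under $\pi_{\rho}$ factoring through some $U^{\mathrm{ab}}$. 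The main obstacle is undoubtedly the third step: establishing the explicit congruences among the $\zeta_{U}^{\mathrm{ab}}$ is a deep input that requires both the $K$-theoretic description of $K_{1}(\Lambda(\mathcal{G}))$ via the integral logarithm of Oliver--Taylor (adapted to the non-commutative Iwasawa setting by Kakde, Burns and Schneider--Venjakob) and the Deligne--Ribet $q$-expansion congruences between Eisenstein series on various Hilbert modular varieties; it is precisely this package of congruences that constitutes the bulk of the work in \cite{MR2813337} and \cite{MR3091976}, and is where the hypothesis $\mu_{p}(E)=0$ is used in an essential way to ensure that the $\zeta_{U}^{\mathrm{ab}}$ live in $K_{1}$ of the Iwasawa algebra itself rather than merely in the total ring of fractions.
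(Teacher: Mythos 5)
The paper does not prove this statement: it is quoted verbatim as a theorem of Ritter--Weiss \cite{MR2813337} and Kakde \cite{MR3091976}, and the text preceding it simply reads ``The following theorem has been shown by Ritter and Weiss \cite{MR2813337} and by Kakde \cite{MR3091976} independently.'' There is no proof in the paper to compare against.

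That said, your outline is a broadly faithful description of the strategy those papers employ: reduction (via the block decomposition of $\Lambda(\mathcal{G})$ and work of Burns) to one-dimensional pro-$p$ or $\ell$-elementary situations, input of Wiles' main conjecture for the abelian subquotients, a $K$-theoretic patching criterion (characterising $K_{1}(\Lambda(\mathcal{G}))$, or $K_{1}$ of the localised algebra, inside a product of abelian $K_{1}$'s via integral logarithms and explicit congruences), and verification of those congruences via the Deligne--Ribet $q$-expansion principle on Hilbert modular Eisenstein series. You have also correctly located where $\mu_{p}(E)=0$ enters: it is used (via Wiles) to ensure that the abelian $p$-adic zeta elements actually lie in $K_{1}$ of the Iwasawa algebras and not merely of their total rings of fractions, which is what makes the integral congruence machinery applicable. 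A couple of small imprecisions: the reduction step is not purely a block-splitting/Galois-descent argument but also uses nontrivial functoriality of both sides under restriction to open subgroups and quotients, and the patching is done over a more refined indexing set than just ``all open $U$ and their abelianisations'' (one fixes a well-chosen lattice of subgroups). But these are details internal to the cited works; the structure you describe is correct. Since the theorem is being invoked as external input rather than proved in this paper, the appropriate move here is simply to cite \cite{MR2813337} and \cite{MR3091976}, as the authors do.
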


We now set $f_{\rho, \Sigma}(T) := G_{\rho, \Sigma}(T) / H_{\rho}(T)$ and note the following result.

\begin{prop}\label{prop:rho-of-zeta}
Suppose that Conjecture \ref{conj:EIMC} holds for $E^{\cyc}/F$.
Then there exists an element $\zeta_{\Sigma} \in K_{1}(\mathcal{Q}(\mathcal{G}))$ such that
$\partial(\zeta_{\Sigma}) = - [C_{\Sigma}^{\bullet}(E^{\cyc}/F)]$ and 
for every irreducible Artin representation $\pi_{\rho}$ of $\mathcal{G}$ with character
$\rho$ we have
\begin{equation}\label{eqn:rho-of-zeta}
	\Phi_{\pi_{\rho}}(\zeta_{\Sigma}) = f_{\rho, \Sigma}(T). 
\end{equation}
\end{prop}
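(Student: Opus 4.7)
The plan is to start with the element $\zeta_{\Sigma} \in K_{1}(\mathcal{Q}(\mathcal{G}))$ supplied by Conjecture \ref{conj:EIMC}; this already satisfies the boundary condition $\partial(\zeta_{\Sigma}) = -[C_{\Sigma}^{\bullet}(E^{\cyc}/F)]$, so what remains is to show that the interpolation formula \eqref{eqn:EIMC-eqn} automatically forces the power-series identity \eqref{eqn:rho-of-zeta}. Writing $g_{\rho}(T) := \Phi_{\pi_{\rho}}(\zeta_{\Sigma})$ (which lies in $Quot(\mathcal{O}'\llbracket T \rrbracket)$ for a suitable finite extension $\mathcal{O}' \supset \mathcal{O}$), the goal is therefore $g_{\rho}(T) = f_{\rho, \Sigma}(T)$.

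The first step is to compare $\Phi_{\pi_{\rho} \kappa^{r}}$ with $\Phi_{\pi_{\rho}}$. Since $\kappa$ factors through $\Gamma_{F}$ with $\kappa(\gamma_{F}) = u$, the homomorphism $\mathcal{G} \to \GL_{n}(\mathcal{O} \otimes_{\Z_{p}} \Lambda(\Gamma_{F}))$ attached to $\pi_{\rho} \kappa^{r}$ sends $g$ to $\pi_{\rho}(g) \cdot \bigl(\kappa(\bar{g})^{r} \bar{g}\bigr)$. The assignment $\gamma \mapsto \kappa(\gamma)\gamma$ is a continuous ring automorphism of $\Lambda(\Gamma_{F})$, and under the identification $\gamma_{F} \mapsto 1+T$ its $r$-fold iterate becomes the substitution $T \mapsto u^{r}(1+T) - 1$. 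Pushing this through the chain of identifications defining $\Phi$ therefore gives
\[
\Phi_{\pi_{\rho} \kappa^{r}}(\zeta_{\Sigma}) = g_{\rho}\bigl(u^{r}(1+T) - 1\bigr),
\]
so applying the evaluation map $\phi$ yields $\zeta_{\Sigma}(\pi_{\rho} \kappa^{r}) = g_{\rho}(u^{r} - 1)$.

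Combining this with \eqref{eqn:EIMC-eqn} and the defining equation \eqref{eq:L_p-power-series} of $f_{\rho, \Sigma}$ shows that
\[
g_{\rho}(u^{r} - 1) = L_{p, \Sigma}(1-r, \rho) = f_{\rho, \Sigma}(u^{r} - 1)
\]
for every integer $r \geq 1$ divisible by $p - 1$. Since $u \in 1 + p\Z_{p}$, the points $u^{r}-1$ lie in $p\Z_{p}$ and accumulate $p$-adically at $0$. To deduce $g_{\rho} = f_{\rho, \Sigma}$ one clears denominators to obtain an element of $\mathcal{O}'\llbracket T \rrbracket$ vanishing at these infinitely many points; the Weierstrass preparation theorem then forces this element to be zero, as any nonzero element of $\mathcal{O}'\llbracket T \rrbracket$ has only finitely many zeros in the open unit disk.

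The main obstacle is the careful bookkeeping of the second step: tracking precisely how the twist by $\kappa^{r}$ interacts with the choice of topological generator $\gamma_{F}$ and with the identifications $K_{1}(M_{n}(\mathcal{Q}^{L}(\Gamma_{F}))) \cong Quot(\mathcal{O}\llbracket T \rrbracket)^{\times}$ used to define $\Phi$. Once the explicit substitution $T \mapsto u^{r}(1+T) - 1$ is justified, the remainder of the argument is a formal application of the identity principle for $p$-adic power series.
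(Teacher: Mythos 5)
Your proposal is correct and follows essentially the same approach as the paper: one uses the compatibility of $\Phi$ with twisting by $\kappa^r$ (which the paper records as $\Phi_{\pi_{\rho}\kappa^{r}}(\zeta) = t_{u^{r}}(\Phi_{\pi_{\rho}}(\zeta))$ with $(t_{a}f)(T) := f(a(1+T)-1)$), combines the EIMC interpolation with \eqref{eq:L_p-power-series} to force $\Phi_{\pi_{\rho}}(\zeta_{\Sigma})$ and $f_{\rho,\Sigma}$ to agree at the infinitely many points $u^{r}-1$, and concludes by the $p$-adic Weierstrass preparation theorem. The only difference is cosmetic: you unpack why the twist relation holds (via the substitution $T \mapsto u^{r}(1+T)-1$ on $\Lambda(\Gamma_{F})$), whereas the paper merely asserts it.
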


\begin{proof}
We first make some definitions and observations that are independent of Conjecture \ref{conj:EIMC}.
For $f(T) \in Quot(\mathcal{O}\llbracket T \rrbracket)$ and $a \in\Z_p^{\times}$, we set $(t_{a} f)(T) := f(a(1+T)-1)$. 
Let $r \in \Z$. Then from the definitions and \eqref{eq:L_p-power-series}, we have
\begin{equation}\label{eqn:phi-f-L}
	\phi((t_{u^{r}}f_{\rho, \Sigma})(T)) = f_{\rho, \Sigma}(u^{r}-1) = L_{p, \Sigma}(1-r, \rho). 
\end{equation} 
Let $\pi_{\rho}$ be any irreducible Artin representation of $\mathcal{G}$. 
Then for any $\zeta \in K_{1}(\mathcal{Q}(\mathcal{G}))$
\begin{equation}\label{eqn:Phi-twist-by-power-of-kappa}
	\Phi_{\pi_{\rho}\kappa^r}(\zeta) = t_{u^r}(\Phi_{\pi_{\rho}}(\zeta)). 
\end{equation}

Now assume that Conjecture \ref{conj:EIMC} holds for $E^{\cyc}/F$ and let $\zeta_{\Sigma} \in K_{1}(\mathcal{Q}(\mathcal{G}))$ 
be the element specified in its statement; thus $\partial(\zeta_{\Sigma}) = - [C_{\Sigma}^{\bullet}(E^{\cyc}/F)]$.
Moreover, using \eqref{eqn:EIMC-eqn}, \eqref{eqn:phi-f-L} and \eqref{eqn:Phi-twist-by-power-of-kappa}, 
for every integer $r \geq 1$ divisible by $p-1$ we have
\[
\phi((t_{u^{r}}f_{\rho, \Sigma})(T)) 
= L_{p, \Sigma}(1-r, \rho) 
= \zeta_{\Sigma}(\pi_{\rho} \kappa^{r})
= \phi(\Phi_{\pi_{\rho} \kappa^{r}}(\zeta_{\Sigma}))
= \phi( t_{u^{r}}(\Phi_{\pi_{\rho}}(\zeta_{\Sigma}))).
\]
The desired equality \eqref{eqn:rho-of-zeta} now follows from the $p$-adic Weierstrass preparation theorem (see \cite[Theorem 7.3]{MR1421575}).
\end{proof}

\subsection{The interpolation property at $s=1$}
We keep the notation of the last subsection.
In particular, $\pi_{\rho}$ is an 
irreducible Artin representation of $\mathcal G$ with character $\rho$. 
We denote the order of vanishing of 
$L_{p, \Sigma}(s, \rho)$ at $s=1$ by $r(\rho)$.
Thus by \eqref{eq:p-adic-L-fnc-leading-term}, if $\Leo(E,p)$ holds, we have $r(\rho) = -1$ when $\rho$ is the trivial character and 
$r(\rho)=0$ when $\rho$ is non-trivial. For any $\xi \in K_{1}(\mathcal{Q}(\mathcal{G}))$, let $\xi^{\ast}(\rho)$ denote its leading term at $\rho$ as defined in \cite[\S 2]{MR2749572};
in other words, $\xi^{\ast}(\rho)$ is the leading term at $T=0$ of $\Phi_{\pi_{\rho}}(\xi)$.
The following result is a variant of \cite[Theorem 9.1]{MR3294653}.
(Note that the sign in the exponent of $\log_{p}(u)$ in \eqref{eqn::EIMC-at-1} below is the opposite of that in \cite[Theorem 9.1, (ii) (a)]{MR3294653};
this difference is explained in the proof of Theorem \ref{thm:descent-result} below.)

\begin{prop}\label{prop:EIMC-at-1}
Suppose that Conjecture \ref{conj:EIMC} holds for $E^{\cyc}/F$.
Then there exists an element $\zeta_{\Sigma} \in K_{1}(\mathcal{Q}(\mathcal{G}))$ such that
$\partial(\zeta_{\Sigma}) = - [C_{\Sigma}^{\bullet}(E^{\cyc}/F)]$ and 
for every irreducible Artin character $\rho$ of $\mathcal{G}$ we have
\begin{equation}\label{eqn::EIMC-at-1}
\zeta_{\Sigma}^{\ast}(\rho) = \log_{p}(u)^{-r(\rho)} L_{p, \Sigma}^{\ast}(1, \rho). 
\end{equation}
\end{prop}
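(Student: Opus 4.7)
The plan is to combine Proposition \ref{prop:rho-of-zeta} with a direct power-series identification of the leading term of $f_{\rho, \Sigma}(T)$ at $T=0$, using the interpolation identity \eqref{eq:L_p-power-series}. The deep content of the result is really contained in Proposition \ref{prop:rho-of-zeta} (which in turn rests on the full strength of Conjecture \ref{conj:EIMC}); what remains is a change of variables.

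Concretely, I would take $\zeta_{\Sigma} \in K_{1}(\mathcal{Q}(\mathcal{G}))$ produced by Proposition \ref{prop:rho-of-zeta}, so that $\partial(\zeta_{\Sigma}) = -[C^{\bullet}_{\Sigma}(E^{\cyc}/F)]$ and $\Phi_{\pi_{\rho}}(\zeta_{\Sigma}) = f_{\rho, \Sigma}(T)$ for every irreducible Artin character $\rho$ of $\mathcal{G}$. By the definition recalled immediately before the statement, $\zeta^{\ast}_{\Sigma}(\rho)$ is the leading term at $T=0$ of $\Phi_{\pi_{\rho}}(\zeta_{\Sigma}) = f_{\rho, \Sigma}(T)$, so it suffices to show that this leading term equals $\log_{p}(u)^{-r(\rho)} L^{\ast}_{p, \Sigma}(1, \rho)$.

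For this, the expansion $u^{s} - 1 = s\log_{p}(u) + O(s^{2})$ shows that the change of variables $T = u^{s}-1$ is an analytic isomorphism near the origin that preserves order of vanishing. From \eqref{eq:L_p-power-series}, $f_{\rho, \Sigma}(u^{s}-1) = L_{p, \Sigma}(1-s, \rho)$, whose order of vanishing at $s=0$ equals $r(\rho)$ by definition. Writing the Laurent expansion
\[
L_{p, \Sigma}(s', \rho) = L^{\ast}_{p, \Sigma}(1, \rho) \cdot (s'-1)^{r(\rho)} + O\bigl((s'-1)^{r(\rho)+1}\bigr)
\]
near $s' = 1$, substituting $s' = 1-s$ and then $s = \log_{p}(1+T)/\log_{p}(u) = T/\log_{p}(u) + O(T^{2})$ yields the desired leading coefficient $\log_{p}(u)^{-r(\rho)} L^{\ast}_{p, \Sigma}(1, \rho)$ after reading off the coefficient of $T^{r(\rho)}$.

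The only mildly delicate step is the case $r(\rho) = -1$, which arises when $\rho$ is trivial and $L_{p, \Sigma}(s, \mathbbm{1}_{\mathcal{G}}) = \zeta_{F, p, \Sigma}(s)$ has a simple pole at $s=1$. There the sign conventions for the leading term of a function with a pole (on the power-series side in $T$, on the $L$-function side in $s$, and for the choice of topological generator $\gamma_{F}$ determining $u$) must be aligned consistently; this is precisely the source of the sign discrepancy with \cite[Theorem 9.1]{MR3294653} noted just before the statement, and tracking it carefully is where the main bookkeeping in the proof will lie.
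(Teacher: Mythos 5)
Your proposal is correct and follows essentially the same route as the paper: both proofs take $\zeta_{\Sigma}$ from Proposition~\ref{prop:rho-of-zeta}, so that $\Phi_{\pi_{\rho}}(\zeta_{\Sigma}) = f_{\rho,\Sigma}(T)$, and then read off the leading term at $T=0$ from the interpolation formula \eqref{eq:L_p-power-series} under the change of variables $T = u^{s}-1$ (the paper writes $f_{\rho,\Sigma}(T) = T^{r(\rho)} h_{\rho,\Sigma}(T)$ and computes $\lim_{s\to 0} h_{\rho,\Sigma}(u^{s}-1)$, which is your substitution spelled out). The one thing I would flag is your concluding remark: the sign discrepancy with \cite[Theorem 9.1]{MR3294653} is attributed in the paper (in the proof of Theorem~\ref{thm:descent-result}) to a misplaced factor in \cite[(29)]{MR2290587} propagating into \cite[equation (8.8)]{MR2749572}, not to the Laurent-expansion bookkeeping at the pole of $\zeta_{F,p}$.
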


\begin{proof}
Let $\zeta_{\Sigma} \in K_{1}(\mathcal{Q}(\mathcal{G}))$ be as in the statement of Proposition \ref{prop:rho-of-zeta}.
Then $\partial(\zeta_{\Sigma}) = - [C_{\Sigma}^{\bullet}(E^{\cyc}/F)]$ and \eqref{eqn:rho-of-zeta} shows that 
$\zeta_{\Sigma}^{\ast}(\rho)$ coincides with the leading term of $f_{\rho, \Sigma}(T)$ at $T=0$.
Now write $f_{\rho, \Sigma}(T) = T^{r(\rho)} \cdot h_{\rho, \Sigma}(T)$
so that $h_{\rho, \Sigma}(0)$ equals the leading term of $f_{\rho, \Sigma}(T)$ at $T = 0$.
Thus $\zeta_{\Sigma}^{\ast}(\rho) = h_{\rho, \Sigma}(0)$.
Moreover, for every $s \in \Z_{p}$
\begin{equation}\label{eqn:Lp-f-rho-h-rho}
L_{p, \Sigma}(1-s, \rho) = f_{\rho, \Sigma}(u^{s}-1) = (u^{s}-1)^{r(\rho)} h_{\rho, \Sigma}(u^{s}-1), 
\end{equation}
where the first equality follows from \eqref{eq:L_p-power-series} and the definition of $f_{\rho, \Sigma}(T)$.

Let $s \in \Z_{p}$.
Since $u = \kappa(\gamma_{F}) \in 1 + p \Z_{p}$ and $p$ is odd, \cite[Proposition 5.7]{MR1421575} gives
\begin{equation}\label{eqn:p-adic-exp-log}
\textstyle{u^{s}-1 = \exp_{p}(\log_{p}(u^{s})) - 1 =  \sum_{n \geq 1} \frac{(\log_{p}(u))^{n}}{n!} s^{n}}. 
\end{equation}
Moreover, 
\begin{eqnarray*}
h_{\rho, \Sigma}(u^{s}-1)
&=& (u^{s}-1)^{-r(\rho)}L_{p, \Sigma}(1-s, \rho)  \quad \textrm{ by \eqref{eqn:Lp-f-rho-h-rho}}\\
&=& \textstyle{\left( \sum_{n \geq 1} \frac{(\log_{p}(u))^{n}}{n!} s^{n} \right)^{-r(\rho)}} L_{p, \Sigma}(1-s, \rho)
\quad \textrm{ by \eqref{eqn:p-adic-exp-log}} \\
&=& \textstyle{\left( \sum_{n \geq 1} \frac{(\log_{p}(u))^{n}}{n!} s^{n-1} \right)^{-r(\rho)}}  s^{-r(\rho)}L_{p, \Sigma}(1-s, \rho).
\end{eqnarray*}
Therefore taking limits as $s \rightarrow 0$ gives
\[
\zeta_{\Sigma}^{\ast}(\rho)
= h_{\rho, \Sigma}(0)
= \lim_{s \rightarrow 0} h_{\rho, \Sigma}(u^{s}-1)
= \log_{p}(u)^{-r(\rho)} L_{p, \Sigma}^{\ast}(1, \rho). \qedhere
\]
\end{proof}

\section{A prime-by-prime descent theorem for the ETNC at $s=1$}

\begin{theorem}\label{thm:descent-result}
Let $E/F$ be a finite Galois extension of totally real number fields and let $G=\Gal(E/F)$.
Fix a prime $p$ and suppose that the $p$-adic Stark conjecture at $s=1$ holds for all $\rho \in \Irr_{\C_{p}}(G)$.
\begin{enumerate}
\item $\ETNC^{\rat}(E/F,1)$ holds.
\item If $p$ is odd then $\ETNC^{\tors}_{p}(E/F,1)$ holds.
\item Suppose $p$ is odd and if $p$ divides $|G|$ then further suppose that $\mu_{p}(E)=0$.\\
Then $\ETNC_{p}(E/F,1)$ holds.
\end{enumerate}
\end{theorem}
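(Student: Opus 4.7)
\emph{Proof outline.} The plan is to obtain part (i) directly from results already in hand, to prove part (iii) by running the equivariant Iwasawa main conjecture together with the $p$-adic Stark hypothesis through a descent from infinite level to $s=1$, and to deduce part (ii) from part (iii) applied to suitable sub-extensions.

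Part (i) is immediate. By Corollary \ref{cor:p-stark-implies-stark}, the hypothesis implies Stark's conjecture at $s=1$ for every $\chi \in R_{\C}^{+}(G)$, and by Proposition \ref{prop:rationality} this is equivalent to $\ETNC^{\rat}(E/F,1)$. Consequently $T\Omega(\Q(1)_{E},\Z[G])_{p}$ is a well-defined element of $K_{0}(\Z_{p}[G],\Q_{p})$, and the remaining parts concern this element.

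For part (iii), we adapt Burns's argument from \cite{MR3294653} so that it works at the single prime $p$. Under the stated $\mu$-vanishing hypothesis when $p\mid|G|$, Theorem \ref{thm:EIMC-with-mu} supplies an element $\zeta_{\Sigma}\in K_{1}(\mathcal{Q}(\mathcal{G}))$ with $\partial(\zeta_{\Sigma}) = -[C_{\Sigma}^{\bullet}(E^{\mathrm{cyc}}/F)]$; when $p\nmid|G|$ the same conclusion is available via character-wise reduction to the classical abelian Iwasawa main conjecture, using that $\Z_{p}[G]$ is then a maximal order. Proposition \ref{prop:EIMC-at-1} next gives the interpolation identity
\[
\zeta_{\Sigma}^{\ast}(\rho) = \log_{p}(u)^{-r(\rho)} L^{\ast}_{p,\Sigma}(1,\rho)
\]
for every irreducible Artin character $\rho$ of $\mathcal{G}$. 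Inserting \eqref{eq:p-adic-Stark-at-s=1-eq} rewrites the right-hand side as $\log_{p}(u)^{-r(\rho)}\,\Omega_{j}(\rho)\,j\!\left(L^{\ast}_{\Sigma}(1,\rho^{j^{-1}})\right)$. One then runs the Burns--Venjakob descent of \cite{MR2749572} to transport the identity $\partial(\zeta_{\Sigma}) = -[C_{\Sigma}^{\bullet}(E^{\mathrm{cyc}}/F)]$ from $K_{0}(\Lambda(\mathcal{G}),\mathcal{Q}(\mathcal{G}))$ down to $K_{0}(\Z_{p}[G],\Q_{p})$, identifies the resulting class with $T\Omega(\Q(1)_{E},\Z[G])_{p}$, and uses the character-by-character information just obtained to force its vanishing. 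The sign in the exponent of $\log_{p}(u)$ in \eqref{eqn::EIMC-at-1}, opposite to that of \cite[Theorem~9.1]{MR3294653}, is exactly what is produced by descending from infinite level to $s=1$ rather than to $s=0$.

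Part (ii) then follows from part (iii) by a reduction argument. Given (i), the characterisation \eqref{eq:tors-intersection} combined with the functoriality of $T\Omega(\Q(1)_{E},\Z[G])$ with respect to restriction and quotient maps (see \cite[Theorem~4.1]{MR1884523}) reduces $\ETNC^{\tors}_{p}(E/F,1)$ to proving $\ETNC_{p}(E^{H'}/E^{H},1)$ for every cyclic subgroup $H\leq G$ and every normal subgroup $H'\trianglelefteq H$ with $[H:H']$ coprime to $p$. The $p$-adic Stark conjecture at $s=1$ is preserved under passage to such sub-quotient extensions by the induction/inflation invariance of Remark \ref{rmk:invariance-of-p-adic-Stark} together with Lemma \ref{lem:properties-of-omega}, and since $p\nmid[H:H']$ the $\mu$-vanishing hypothesis in (iii) is vacuous for each such extension. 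Applying (iii) to each gives the required vanishing.

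The principal obstacle is the descent step in (iii): one must promote the $K_{1}$-level interpolation of Proposition \ref{prop:EIMC-at-1} to an equality inside $K_{0}(\Z_{p}[G],\Q_{p})$ and match it character-by-character with $T\Omega(\Q(1)_{E},\Z[G])_{p}$, while tracking the topological generator $\gamma_{F}$, the field isomorphism $j:\C\cong\C_{p}$, the trivialisation of $C_{\Sigma}^{\bullet}(E^{\mathrm{cyc}}/F)$ afforded by $\zeta_{\Sigma}$, and the isomorphisms $\mu_{\infty},\mu_{p}$ underlying $\Omega_{j}(\rho)$. Carrying out this descent prime-by-prime, rather than after inverting $2$ and reassembling primes as in \cite{MR3294653}, removes the option of absorbing sign and normalisation ambiguities across primes, and controlling them individually is the heart of the argument.
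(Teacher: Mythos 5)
Your proof follows the paper's argument essentially step for step: part (i) from Corollary \ref{cor:p-stark-implies-stark} together with Proposition \ref{prop:rationality}, part (iii) by combining the EIMC (Theorem \ref{thm:EIMC-with-mu}, or \cite[Theorem 4.12]{MR3749195} when $p\nmid|G|$) and Proposition \ref{prop:EIMC-at-1} with the Burns--Venjakob descent of \cite{MR2749572}, and part (ii) deduced from (iii) via the reduction to cyclic prime-to-$p$ sub-quotients encoded in \eqref{eq:tors-intersection}. One side remark is inaccurate: the sign in \eqref{eqn::EIMC-at-1} differs from \cite[Theorem 9.1]{MR3294653} not because of descending to $s=1$ rather than $s=0$ (both results concern $s=1$), but because the paper corrects a sign error in \cite[equation (8.8)]{MR2749572} inherited from \cite[(29)]{MR2290587}; this does not affect the validity of your argument.
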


\begin{remark}\label{rmk:improvement-compared-to-Burns-descent-result}
Theorem \ref{thm:descent-result} refines Burns' descent result \cite[Corollary 2.8]{MR3294653}.
A key point is that Burns' result assumes the $p$-adic Stark conjecture at $s=1$  for \emph{all} odd primes and then uses
a result on certain maps between relative algebraic $K$-groups \cite[Lemma 2.1]{MR2371375} to deduce $\ETNC^{\rat}(E/F,1)$.
By contrast, we assume the $p$-adic Stark conjecture at $s=1$ for a single prime $p$ and deduce $\ETNC^{\rat}(E/F,1)$ using
Corollary \ref{cor:p-stark-implies-stark}. There are several other differences between the two approaches, but we emphasise that 
both crucially rely on the descent theory of Burns and Venjakob \cite{MR2749572}.
\end{remark}

\begin{proof}[Proof of Theorem \ref{thm:descent-result}]
Claim (i) follows from Corollary \ref{cor:p-stark-implies-stark} and 
Proposition \ref{prop:rationality}. Now assume that $p$ is odd.
By the discussion around \eqref{eq:tors-intersection}, to prove $\ETNC^{\tors}_{p}(E/F,1)$
we may assume without loss of generality that $E/F$ is cyclic of degree prime to $p$.
Thus to prove (ii) it suffices to prove (iii), which we now do.
Let $E^{\cyc}$ denote the cyclotomic $\Z_{p}$-extension of $E$ and let $\mathcal{G}=\Gal(E^{\cyc}/F)$.
Then the equivariant Iwasawa main conjecture for the extension $E^{\cyc}/F$
holds by \cite[Theorem 4.12]{MR3749195} if $p \nmid |G|$ and
by Theorem \ref{thm:EIMC-with-mu} otherwise.
As the $p$-adic Stark conjecture at $s=1$ holds for all $\rho \in \Irr_{\C_{p}}(G)$
by assumption, it follows from Proposition \ref{prop:EIMC-at-1} that
there exists a $\zeta_{\Sigma} \in K_1(\mathcal{Q}(\mathcal{G}))$ such that
$\partial(\zeta_{\Sigma}) = - [C_{\Sigma}^{\bullet}(E^{\cyc}/F)]$ and  we have
\[
\zeta_{\Sigma}^{\ast}(\rho) = \log_{p}(u)^{\langle \rho,\mathbbm{1}_{G} \rangle_{G}} \Omega_j(\rho) \cdot j\left(L_{\Sigma}^{\ast}(1, \rho^{j^{-1}})\right)
\]
for all Artin representations $\rho$ of $\mathcal{G}$ that factor through $G$
and for every $j: \C \cong \C_p$. 
Thus \cite[Theorem 2.2]{MR2749572} implies that \cite[equation (8.8)]{MR2749572} holds.
(To see this, observe that the complex $C_{E^{\cyc}}$ of \cite{MR2749572}
identifies with $C_{\Sigma}^{\bullet}(E^{\cyc}/F)[-3]$ by Artin-Verdier duality;
the shift implies that $[C_{E^{\cyc}}] = - [C_{\Sigma}^{\bullet}(E^{\cyc}/F)]$.
Moreover, in the notation of loc.\ cit., the exponent $-\langle \rho,1 \rangle$
that occurs in \cite[equation (8.8)]{MR2749572}
should in fact be $+\langle \rho,1 \rangle$
because the factor $c_{\gamma}^{\langle \rho, 1\rangle}$
that appears on the left of \cite[(29)]{MR2290587} 
should actually be on the right of that formula.)
Since $T\Omega(\Q(1)_E, \Z[G])$ belongs to $K_{0}(\Z[G],\Q)$ by part (i), 
we can deduce as in the proof of \cite[Theorem 8.1]{MR2749572}
that its image in $K_{0}(\Z_{p}[G],\Q_{p})$ vanishes.
\end{proof}

\begin{remark}
It seems plausible that the result of Theorem \ref{thm:descent-result} (ii) can be deduced using only the Iwasawa main conjecture for totally real fields as
proven by Wiles \cite{MR1053488}, as opposed to the more general equivariant Iwasawa main conjecture.
\end{remark}

\begin{remark}
Let $L/K$ be a finite Galois extension of number fields and fix a prime $p$.
In \S \ref{subsec:ETNC-reduction-steps}, we saw that to prove 
$\ETNC^{\rat}(L/K,1)$, $\ETNC^{\tors}(L/K,1)$ or $\ETNC_{p}^{\tors}(L/K,1)$,
it suffices to consider certain cyclic sub-extensions of $L/K$, but the proof of 
$\ETNC_{p}(L/K,1)$ cannot always be reduced to cyclic (or even abelian) sub-extensions in the same way. 
In the context of Theorem \ref{thm:descent-result}, it is interesting to contrast this with 
the fact that the proof of the $p$-adic Stark conjecture at $s=1$ for $E/F$ a Galois extension 
of totally real number fields always reduces to
certain cyclic sub-extensions of $E/F$
(see Remark \ref{rmk:p-adic-abelian-subextens-reduction}).
\end{remark}

\begin{remark}\label{rmk:vanishing-of-mu-in-Galois-p-extensions}
Let $p$ be an odd prime and let $L/K$ be a finite Galois $p$-extension of number fields.
Then $\mu_{p}(L)=0$ if and only if $\mu_{p}(K)=0$ by \cite[Theorem 11.3.8]{MR2392026}.
Hence the hypothesis that $\mu_{p}(E)=0$ in Theorem \ref{thm:descent-result} (iii) can be weakened to
``there exists a subfield $E'$ of $E$ such that $E/E'$ is a Galois extension of $p$-power degree and $\mu_{p}(E')=0$''.
Moreover, since $\mu_{p}(E)=0$ vanishes whenever $E/\Q$ is abelian (see \S \ref{subsec:EIMC}) we deduce that
$\mu_{p}(E)=0$ whenever $E$ is a Galois $p$-extension of an abelian extension $E'/\Q$.
The same remarks apply to Theorem \ref{thm:LTC-1-from-Leo-for-virtual-perm-chars} and 
Corollaries \ref{cor:rational-frob-inversion} and \ref{cor:LTC-for-every-Galois-group} below.
\end{remark}

\begin{remark}\label{rmk:reasons-for-mu=0-can-weaken-using-hybrid}
There are two reasons for the $\mu_{p}(E) = 0$ hypothesis in Theorem \ref{thm:descent-result}.
The first is that it ensures the validity of the EIMC by Theorem \ref{thm:EIMC-with-mu}.
Using the theory of `hybrid Iwasawa algebras', the present authors \cite{MR3749195}
have proven the EIMC unconditionally in certain cases when it is not known that $\mu_{p}(E)=0$.
Unfortunately, it is not possible to use this result in the present context because the second reason for the
$\mu_{p}(E)=0$ hypothesis is that it is required for
the descent theory of Burns and Venjakob \cite{MR2749572} when $\mathcal{G}$ has an element of order $p$
(i.e.\ when $p$ divides $[E^{\cyc}:F^{\cyc}]$). However, it is still possible to weaken
the $\mu_{p}(E) = 0$ hypothesis in certain situations by using the theory of $p$-adic
hybrid group rings \cite{MR3461042} at the finite level as illustrated by Example \ref{ex:S4-V4-ETNC-at-s=1} below.
\end{remark}

\begin{corollary}\label{cor:absolutely-abelian}
Let $E/F$ be a finite Galois extension of totally real number fields and suppose that
$E/\Q$ is abelian. Then $\ETNC^{\rat}(E/F,1)$ holds and $\ETNC_{p}(E/F,1)$ holds
for every odd prime $p$.
\end{corollary}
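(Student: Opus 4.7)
The plan is to simply combine the results already established in the paper. Since $E/\Q$ is abelian, every irreducible character $\rho \in \Irr_{\C_p}(G)$ of $G = \Gal(E/F)$ is absolutely abelian: taking $N$ to be the trivial subgroup of $G$, we have $E^N = E$ and $E/\Q$ is abelian by hypothesis, so $\rho$ factors (trivially) through $G/N \cong \Gal(E/F)$ with $E^N/\Q$ abelian.

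First I would invoke Theorem \ref{thm:p-adic-Stark-for-absolutely-abelian-characters} to conclude that, for every prime $p$, the $p$-adic Stark conjecture at $s=1$ holds for every $\rho \in \Irr_{\C_p}(G)$. In particular, fixing any prime $p$ (odd or not), the hypothesis of Theorem \ref{thm:descent-result} is satisfied, so part (i) of that theorem immediately yields $\ETNC^{\rat}(E/F,1)$.

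Next, to deduce $\ETNC_p(E/F,1)$ for odd primes $p$, I would apply Theorem \ref{thm:descent-result}(iii). Its remaining hypothesis, that $\mu_p(E) = 0$ whenever $p \mid |G|$, is handled by the Ferrero--Washington theorem: as recalled in \S\ref{subsec:EIMC}, $\mu_p(E) = 0$ for every odd prime $p$ whenever $E/\Q$ is abelian. Thus the conclusion of Theorem \ref{thm:descent-result}(iii) gives $\ETNC_p(E/F,1)$ for every odd prime $p$, completing the proof.

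There is really no obstacle here beyond verifying that each hypothesis of Theorems \ref{thm:p-adic-Stark-for-absolutely-abelian-characters} and \ref{thm:descent-result} is met: the absolute abelianness of all characters is automatic from the hypothesis on $E/\Q$, and the vanishing of $\mu_p(E)$ is supplied by Ferrero--Washington. The corollary is therefore a direct synthesis of the main results of \S\ref{sec:p-adic-Stark-abs-abelian} and the prime-by-prime descent theorem.
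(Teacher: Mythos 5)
Your proof is correct and follows essentially the same route as the paper's: invoke Theorem \ref{thm:p-adic-Stark-for-absolutely-abelian-characters} (noting all characters of $\Gal(E/F)$ are absolutely abelian since $E/\Q$ is), use Ferrero--Washington for $\mu_p(E)=0$, and conclude via Theorem \ref{thm:descent-result}.
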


\begin{proof}
Let $p$ be a prime. The $p$-adic Stark conjecture at $s=1$ holds by Theorem \ref{thm:p-adic-Stark-for-absolutely-abelian-characters}
and a theorem of Ferrero and Washington \cite{MR528968} implies that $\mu_{p}(E) = 0$ (see \S \ref{subsec:EIMC}).
Therefore $\ETNC^{\rat}(E/F,1)$ and $\ETNC_{p}(E/F,1)$ for $p$ odd follow from Theorem \ref{thm:descent-result}.
\end{proof}

\begin{remark}\label{rmk:comparison-of-approaches-for-ETNC-s=1-in-abs-abelian-case}
A more general version of Corollary \ref{cor:absolutely-abelian}
(without the restrictions that the fields in question are totally real or that $p$ is odd) is certainly well-known, 
but our approach provides a new proof in this particular setting. 
The method of Burns and Flach \cite{MR2290586} uses the validity of $\ETNC(E/F,0)$ 
(as proven outside the $2$-primary part by Burns and Greither \cite{MR1992015} and at $p=2$ by Flach \cite{MR2863902}) 
and  compatibility with the functional equation. 
In some respects, our approach is closer to the proof of Huber and Kings \cite{MR2002643}
of the Bloch-Kato conjecture for Dirichlet characters, which implies (among other results)
$\ETNC_{p}^{\tors}(E/F,1)$ for odd primes $p$; of course, this is somewhat weaker than  $\ETNC_{p}(E/F,1)$.
They formulate a variant of the main conjecture and then descend at $s=1$;
however, they do not prove or refer to the $p$-adic Stark conjecture at $s=1$.
\end{remark}

\begin{corollary}\label{cor:ETNC-at-s=1-perm-chars}
Let $E/F$ be a finite Galois extension of totally real number fields such that $G:=\Gal(E/F)$ satisfies $\Perm(G)=R_{\C}(G)$.
Fix a prime $p$ and suppose that $\Leo(E,p)$ holds. Then conclusions (i), (ii) and (iii) of Theorem \ref{thm:descent-result} hold.
\end{corollary}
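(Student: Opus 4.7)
The plan is to reduce Corollary \ref{cor:ETNC-at-s=1-perm-chars} to a direct application of Theorem \ref{thm:descent-result} by verifying its only remaining hypothesis, namely that the $p$-adic Stark conjecture at $s=1$ holds for every $\rho \in \Irr_{\C_{p}}(G)$.

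First, I would invoke Corollary \ref{cor:leo-implies-p-adic-Stark-for-virtual-perm-chars}: the two standing assumptions $\Perm(G) = R_{\C}(G)$ and $\Leo(E,p)$ together imply that the $p$-adic Stark conjecture at $s=1$ holds for every $\rho \in R_{\C_{p}}(G)$. Since $\Irr_{\C_{p}}(G) \subset R_{\C_{p}}(G)$, this in particular gives the conjecture for all irreducible $\C_{p}$-valued characters of $G$, which is precisely the input required by Theorem \ref{thm:descent-result}.

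Having secured that input, Theorem \ref{thm:descent-result} applies verbatim and yields: part (i) gives $\ETNC^{\rat}(E/F,1)$ with no further hypothesis on $p$; part (ii) gives $\ETNC^{\tors}_{p}(E/F,1)$ under the hypothesis that $p$ is odd; and part (iii) gives $\ETNC_{p}(E/F,1)$ under the hypothesis that $p$ is odd and, if $p$ divides $|G|$, that $\mu_{p}(E) = 0$ (the hypothesis being automatic when $p \nmid |G|$). These are the conclusions asserted by the corollary under their respective hypotheses.

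There is essentially no obstacle: the corollary is a straightforward packaging of the two preceding results. All the substantive content already lives in Corollary \ref{cor:leo-implies-p-adic-Stark-for-virtual-perm-chars} (which in turn uses Proposition \ref{prop:leo-implies-p-adic-Stark-for-virtual-perm-chars} together with Brauer-style manipulations built on Corollary \ref{cor:p-Stark-for-induced-chars}) and in Theorem \ref{thm:descent-result} itself, whose proof passes through Corollary \ref{cor:p-stark-implies-stark}, Proposition \ref{prop:EIMC-at-1} and the Burns--Venjakob descent formalism. Accordingly the proof I would write is a two-line chain of implications.
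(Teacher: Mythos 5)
Your proof is correct and coincides with the paper's: the authors also obtain this corollary directly by combining Corollary \ref{cor:leo-implies-p-adic-Stark-for-virtual-perm-chars} (which supplies the $p$-adic Stark hypothesis for all $\rho \in R_{\C_p}(G)$, hence for all irreducibles) with Theorem \ref{thm:descent-result}. Your careful reading of conclusions (ii) and (iii) as carrying their own internal hypotheses on $p$ and $\mu_p(E)$ matches the intended meaning.
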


\begin{proof}
This is the combination of Corollary \ref{cor:leo-implies-p-adic-Stark-for-virtual-perm-chars} and Theorem \ref{thm:descent-result}. 
\end{proof}

\begin{remark}\label{rmk:all-chars-perm-chars}
The collection of finite groups $G$ such that $\Perm(G)= R_{\C}(G)$ is closed under direct products and includes
the symmetric groups, the hyperoctahedral groups (which include the dihedral group of order $8$), and many others besides.
See \cite{MR0357573} and \cite{MR765700} for more on this topic.
\end{remark}

\begin{example}\label{ex:S4-V4-ETNC-at-s=1}
Let $E$ be a totally real Galois extension of $\Q$ with $\Gal(E/\Q) \cong S_{4}$ and assume that $\Leo(E,3)$ holds.
Then $\ETNC^{\rat}(E/\Q,1)$ and $\ETNC^{\tors}_{3}(E/\Q,1)$ hold by Corollary \ref{cor:ETNC-at-s=1-perm-chars} (i) \& (ii), respectively.
Let $F$ be the subfield of $E$ fixed by $V_{4}$, the subgroup of $S_{4}$ generated by the double transpositions. 
Then $\Gal(F/\Q) \cong S_{4}/V_{4} \cong S_{3}$.
Since $F$ is a cubic Galois extension of a quadratic extension of $\Q$, we have $\mu_{3}(F)=0$
by Remark \ref{rmk:vanishing-of-mu-in-Galois-p-extensions}.
Thus $\ETNC_{3}(F/\Q,1)$ holds by Corollary \ref{cor:ETNC-at-s=1-perm-chars} (iii).
Moreover, the group ring $\Z_{3}[S_{4}]$ is `$V_{4}$-hybrid' by \cite[Example 2.18]{MR3461042}
and so the map 
\[
\quot^{S_{4}}_{S_{4}/V_{4}} : K_{0}(\Z_{3}[S_{4}],\Q_{3})_{\tors} \longrightarrow K_{0}(\Z_{3}[S_{4}/V_{4}],\Q_{3})_{\tors} 
\]
is injective by \cite[Proposition 3.8]{MR3461042}.
Therefore $\ETNC_{3}(E/\Q,1)$ holds by the functoriality properties of the ETNC with respect to quotient maps. 
Note that this result cannot be deduced directly from Corollary \ref{cor:ETNC-at-s=1-perm-chars} (iii)
without additionally assuming that $\mu_{3}(E)=0$,
which illustrates the point made at the end of Remark \ref{rmk:reasons-for-mu=0-can-weaken-using-hybrid}.
\end{example}

\section{The leading term conjectures at $s=0$ and $s=1$}\label{subsec:LTCs}

\subsection{Overview of the leading term conjectures}
We give a brief overview of the leading term conjectures (LTCs) at $s=0$ and $s=1$ formulated by Breuning and Burns;
we refer the reader to their article \cite{MR2371375} and the references therein for further details.

Let $L/K$ be a finite Galois extension of number fields and let $G=\Gal(L/K)$.
For $r \in \{ 0,1 \}$ one can define an element $T\Omega(L/K,r) \in K_{0}(\Z[G],\R)$ that relates leading terms of equivariant Artin $L$-functions at $s=r$ to a certain arithmetic complex. The leading term conjecture at $s=r$ is the assertion that $T\Omega(L/K,r)$ vanishes.
In analogy with \S \ref{subsec:ETNC-overview}, we define the following notation.
\begin{itemize}
\item $\LTC(L/K,r)$ means $T\Omega(L/K,r)=0$.
\item $\LTC^{\tors}(L/K,r)$ means $T\Omega(L/K,r) \in K_{0}(\Z[G],\Q)_{\tors}$.
\item $\LTC^{\rat}(L/K,r)$ means $T\Omega(L/K,r) \in K_{0}(\Z[G],\Q)$. 
\end{itemize}
Thus if $\LTC^{\rat}(L/K,r)$ holds then by \eqref{eq:p-part-decomposition} 
we have elements $T\Omega(L/K,r)_{p}$ in $K_{0}(\Z_{p}[G],\Q_{p})$ for each prime $p$.
In this situation, we define the following notation.
\begin{itemize}
\item $\LTC_{p}(L/K,r)$ means $T\Omega(L/K,r)_{p}=0$.
\item $\LTC_{p}^{\tors}(L/K,r)$ means $T\Omega(L/K,r)_{p} \in K_{0}(\Z_{p}[G],\Q_{p})_{\tors}$.
\item $\LTC_{\odd}(L/K,r)$ means $\LTC_{p}(L/K,r)$ holds for all odd primes $p$.
\end{itemize}

The LTCs have the same functoriality properties as the ETNC and so the
reduction steps for the ETNC described in \S \ref{subsec:ETNC-reduction-steps} also apply to the LTCs.
A brief discussion of the relation of the LTCs to other conjectures is given in \S \ref{sec:introduction}; also see \cite[Propositions 3.6 and 4.4]{MR2371375}.
Finally, we note two important known cases of the LTCs.

\begin{theorem}\label{thm:known-cases-of-LTCs}
Let $L/K$ be a finite Galois extension of number fields. Let $G=\Gal(L/K)$ and let $r \in \{0,1\}$.
\begin{enumerate}
\item If $L/\Q$ is abelian then $\LTC(L/K,r)$ holds.
\item If $\Char_{\Q}(G)= R_{\C}(G)$ then $\LTC^{\tors}(L/K,r)$ holds.
\end{enumerate}
\end{theorem}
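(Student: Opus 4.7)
\textbf{Proof proposal for Theorem \ref{thm:known-cases-of-LTCs}.}

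For part (i), the plan is to reduce to the corresponding cases of the ETNC, which are known. Concretely, by \cite{MR1863302} we have the equivalence $\LTC(L/K,0) \Leftrightarrow \ETNC(L/K,0)$, and by \cite{MR2804251} we have $\LTC(L/K,1) \Leftrightarrow \ETNC(L/K,1)$ provided that Leopoldt's conjecture holds for $L$ at every prime. Since $L/\Q$ is abelian, Theorem \ref{thm:leo-results} (iii) supplies Leopoldt at every prime, so both equivalences apply. The ETNC for the pair $(h^{0}(\Spec(L))(r),\Z[G])$ for $r \in \{0,1\}$ when $L/\Q$ is abelian is known unconditionally: for $r=0$ by Burns--Greither \cite{MR1992015} away from $2$, extended to $p=2$ by Flach \cite{MR2863902}, and for $r=1$ by Burns--Flach \cite{MR2290586}. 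Chaining these results with the equivalences yields $\LTC(L/K,r)$ for $r \in \{0,1\}$.

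For part (ii), the plan is to invoke the known results of Tate for rational-valued characters. The hypothesis $\Char_{\Q}(G) = R_{\C}(G)$ means every irreducible complex character of $G$ is $\Q$-valued. By Remark \ref{rmk:Stark-s=01-for-rational-valued-characters}, Stark's conjectures at $s=0$ and $s=1$ hold for every $\chi \in \Char_{\Q}(G)$; under our hypothesis this gives Stark's conjecture at $s=r$ for every $\chi \in R_{\C}^{+}(G)$. By Proposition \ref{prop:rationality} (and its evident LTC analogue, obtained by combining the Burns/Breuning--Burns equivalences of part (i) applied rationally with Proposition \ref{prop:rationality}), we already obtain $\LTC^{\rat}(L/K,r)$. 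The remaining step is to upgrade rationality of $T\Omega(L/K,r)$ to torsion.

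For this upgrade, I would appeal to the fact that $\LTC^{\tors}(L/K,0)$ is essentially a reformulation of Chinburg's strong Stark conjecture (cf.\ the remarks in the introduction and \cite[Proposition 3.6]{MR2371375}), and that Tate proved the strong Stark conjecture for all rational-valued characters in \cite[Chapitre II]{MR782485} by the same methods used to prove Stark's conjecture itself in that setting (reducing via induction and addition to the trivial character, where the strong form follows from the analytic class number formula). This yields $\LTC^{\tors}(L/K,0)$. To pass to $r=1$, I would use that the truth of $\LTC^{\tors}$ is transported between $s=0$ and $s=1$ via the functional equation of equivariant $L$-functions together with the global epsilon constant conjecture of Bley--Burns; however, since the global epsilon constant conjecture is known to hold modulo torsion unconditionally (see \cite{MR2005875} for the precise formulation), this transport preserves the torsion property, giving $\LTC^{\tors}(L/K,1)$.

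The hardest step is verifying the strong Stark torsion claim directly from Tate's rationality statements, since Remark \ref{rmk:Stark-s=01-for-rational-valued-characters} as written only supplies the weaker Stark conjecture. One must either cite the explicit strong form in Tate's Chapitre II or redo the induction argument keeping track of torsion classes rather than just rational equivalence; either route is routine in principle but carries most of the technical weight of part (ii).
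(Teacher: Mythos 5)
Your approach matches the paper's essentially point for point. For (i), you unpack \cite[Corollary 1.3]{MR2804251} (which the paper simply cites) into the LTC/ETNC equivalences of Burns and Breuning--Burns and the abelian cases of the ETNC due to Burns--Greither, Flach and Burns--Flach, which is exactly what that corollary rests on. For (ii), you ultimately arrive at the paper's argument: the strong Stark conjecture of Chinburg holds for rational-valued characters, it is equivalent to $\LTC^{\tors}(L/K,0)$, and $\LTC^{\tors}$ transfers from $s=0$ to $s=1$ because the global epsilon constant element is always torsion. Two small corrections to tighten it: the equivalence of the strong Stark conjecture with $\LTC^{\tors}(L/K,0)$ is \cite[Proposition 4.4(ii)]{MR2371375}, not Proposition 3.6 (which is the rationality reformulation, i.e.\ $\LTC^{\rat}$ with Stark's conjecture); and the strong Stark conjecture for $\Q$-valued characters is explicitly \cite[Chapitre II, Th\'eor\`eme 6.8]{MR782485}, so the intermediate step deriving $\LTC^{\rat}(L/K,r)$ is logically superfluous once you invoke the strong form.
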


\begin{proof}
The first claim is \cite[Corollary 1.3]{MR2804251}, which crucially depends on the results of \cite{MR1992015}, \cite{MR2290586}, 
and \cite{MR2863902}. The second claim is well-known to experts; we give a proof here for the convenience of the reader.
The strong Stark conjecture (as formulated by Chinburg \cite[Conjecture 2.2]{MR724009}) 
for $L/K$ and all $\chi \in R_{\C}^+(G)$ is known to be equivalent to $\LTC^{\tors}(L/K,0)$
by \cite[Proposition 4.4 (ii)]{MR2371375}. However, the strong Stark conjecture holds
for rational valued characters by \cite[Chapitre II, Th\'eor\`eme 6.8]{MR782485}.
Thus $\LTC^{\tors}(L/K,0)$ holds and so does $\LTC^{\tors}(L/K,1)$ by Corollary \ref{cor:LTC-0-1-relations} below.
\end{proof}

\subsection{The epsilon constant conjectures}\label{subsec:ECCs}
The global epsilon constant conjecture formulated by Bley and Burns \cite{MR2005875} is a natural conjecture for
global epsilon constants arising from the compatibility of the leading term conjectures at $s=0$ and $s=1$ with respect to the
functional equation of the equivariant Artin $L$-function. 
Here we consider an equivalent formulation of Breuning and Burns \cite[\S 5]{MR2371375}.
An element $T\Omega^{\loc}(L/K,1) \in K_{0}(\Z[G],\R)$ is defined and the assertion of the conjecture is that this element vanishes.
Moreover, it is known \cite[Theorem 5.2]{MR2371375} that
\begin{equation}\label{eq:TOmega-at-0-and-1}
\psi_{G}^{*}(T\Omega(L/K,0)) - T\Omega(L/K,1) = T\Omega^{\loc}(L/K,1), 
\end{equation}
where $\psi_{G}^{*}$ is a certain involution of $K_{0}(\Z[G],\R)$ (see \cite[\S 2.1.4]{MR2371375}).
Thus if $T\Omega^{\loc}(L/K,1)$ vanishes then $\LTC(L/K,0)$ holds if and only if $\LTC(L/K,1)$ holds.
Furthermore, as shown in \cite[Remark 4.2 (iv)]{MR2005875},
the vanishing of $T\Omega^{\loc}(L/K,1)$ also implies Chinburg's `$\Omega(2)$-conjecture' as formulated in \cite[Question 3.1]{MR786352}.

It is known that we always have $T\Omega^{\loc}(L/K,1) \in K_{0}(\Z[G],\Q)$ and thus \eqref{eq:p-part-decomposition}
defines $p$-parts $T\Omega^{\loc}(L/K,1)_{p} \in K_{0}(\Z_{p}[G],\Q_{p})$ for every prime $p$. 
Breuning \cite{MR2078894} has refined this further by formulating an independent conjecture for each finite Galois extension of $p$-adic fields $N/M$.
He defined an element $R_{N/M} \in K_{0}(\Z_{p}[\Gal(N/M)],\Q_{p})$ incorporating local epsilon constants and conjectured that $R_{N/M}$ always vanishes.
Moreover, this local conjecture is related to the global epsilon constant conjecture by the equation
\begin{equation}\label{eq:lec-gec}
\textstyle{T\Omega^{\loc}(L/K,1)_{p} = \sum_{v \in \Sigma_{p}(K)} \ind^{G}_{G_{w}}(R_{L_{w}/K_{v}}),}
\end{equation}
where $w$ is a fixed place of $L$ above $v$, $G_{w}$ denotes the decomposition group and $\ind^{G}_{G_{w}}$ is the induction map defined
between relative algebraic $K$-groups (see \cite[Theorem 4.1]{MR2078894}). 
In particular, for a fixed prime $p$, the validity of the local conjecture for all non-archimedean completions $L_{w}/K_{v}$
with $v \in \Sigma_{p}(K)$ implies that $T\Omega^{\loc}(L/K,1)_{p}=0$.

\begin{theorem}\label{thm:known-cases-of-ECC}
Let $L/K$ be a finite Galois extension of number fields. Let $G=\Gal(L/K)$ and
let $p$ be a prime. Then $T\Omega^{\loc}(L/K,1)_{p} \in K_{0}(\Z_{p}[G],\Q_{p})_{\tors}$.
Moreover, we have that $T\Omega^{\loc}(L/K,1)_{p}=0$ if for every $v \in \Sigma_{p}(K)$ there is some $w \in 
\Sigma_{p}(L)$ such that $w \mid v$ and at least one of the following holds:
\begin{enumerate}
\item $w$ is at most tamely ramified in $L/K$;
\item $p$ is odd and $L_{w}/\Q_{p}$ is abelian;
\item $p$ is odd and $[L_{w}:\Q_{p}] \leq 15$;
\item $p$ is odd,
$L_{w}/K_{v}$ is abelian and weakly ramified with cyclic ramification group, 
$K_{v}/\Q_{p}$ is unramified and $[K_{v}:\Q_{p}]$ is coprime to the inertia degree of $L_{w}/K_{v}$.
\end{enumerate} 
\end{theorem}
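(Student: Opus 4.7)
The plan is to reduce both assertions to Breuning's local epsilon constant conjecture via the decomposition formula \eqref{eq:lec-gec}, and then to invoke the known cases of the local conjecture from the literature.

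For the torsion statement, I would first note that Breuning's construction of $R_{N/M}$ is designed so that this element always lies in the torsion subgroup of $K_{0}(\Z_{p}[\Gal(N/M)],\Q_{p})$; this is proved in Breuning's original article. Since induction maps preserve torsion subgroups of relative algebraic $K$-groups (which one sees directly from the long exact sequence \eqref{eq:long-exact-seq} together with the fact that induction is compatible with the boundary map $\partial$ and with the maps from $K_{1}$), formula \eqref{eq:lec-gec} displays $T\Omega^{\loc}(L/K,1)_{p}$ as a finite sum of induced torsion classes, and hence as a torsion class itself.

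For the vanishing statement, the plan is as follows. Fix for each $v \in \Sigma_{p}(K)$ a place $w \mid v$ as permitted by the hypothesis, and apply \eqref{eq:lec-gec}. Since $\ind^{G}_{G_{w}}$ is a group homomorphism, if $R_{L_{w}/K_{v}} = 0$ for each chosen $w$, the entire sum on the right vanishes; thus it suffices to verify Breuning's local conjecture $R_{L_{w}/K_{v}} = 0$ for the completions specified by conditions (i)--(iv). I would then invoke the known cases one by one: the tame case (i) is Breuning's original main theorem in \cite{MR2078894}; the absolutely abelian case (ii) over $\Q_{p}$ for odd $p$ is also due to Breuning, obtained by reduction to cyclotomic extensions together with the functoriality of $R_{N/M}$; the small-degree case (iii) with $[L_{w}:\Q_{p}] \leq 15$ and $p$ odd is established by explicit computational verification (notably in the work of Bley--Debeerst); and the weakly ramified case (iv) under the stated unramifiedness and coprimality hypotheses is a more recent result on Breuning's local conjecture in the abelian weakly-ramified setting.

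The main obstacle is not in the reduction, which is immediate from \eqref{eq:lec-gec}, but rather in correctly locating and matching the four listed hypotheses to the existing results on the local conjecture; case (iv), being the most technical and most recent, is the most delicate to cite precisely.
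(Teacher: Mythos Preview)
Your proposal is correct and follows essentially the same approach as the paper's own proof: reduce to the local conjecture via \eqref{eq:lec-gec}, then cite Breuning \cite{MR2078894} for the torsion statement and cases (i)--(ii), Bley--Debeerst \cite{MR3073206} for case (iii), and \cite{MR3520002} for case (iv). Your extra remark that induction preserves torsion is a harmless elaboration; the paper simply cites \cite[Corollary 3.8]{MR2078894} for the torsion claim and leaves the compatibility with induction implicit in \eqref{eq:lec-gec}.
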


\begin{proof}
By \eqref{eq:lec-gec}, each claim follows from the analogous claim for the local conjecture.
The first claim follows from \cite[Corollary 3.8]{MR2078894}.
Cases (i) and (ii) follow from \cite[Theorem 3.6, Proposition 4.4]{MR2078894}.
Case (iii) follows from \cite[Theorem 1(a)]{MR3073206} and case (iv) follows from \cite[Theorem 1]{MR3520002}.
\end{proof}

\begin{theorem}\label{thm:GEGG-for-C3m:C2-extensions}
Let $m$ be a positive integer and let $G=(C_{3})^{m} \rtimes C_{2}$ where $C_{2}$ acts on $(C_{3})^{m}$ by inversion
(in the case $m=1$ we have $G \cong S_{3}$).
Let $L/K$ be a Galois extension of number fields with $\Gal(L/K) \cong G$
such that $3$ splits completely in $K/\Q$.
Then the global epsilon constant conjecture holds for $L/K$, that is, $T\Omega^{\loc}(L/K,1)=0$.
\end{theorem}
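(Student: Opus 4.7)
The plan is to decompose $T\Omega^{\loc}(L/K,1)$ into its $p$-primary parts via \eqref{eq:p-part-decomposition} and show each part vanishes. By \eqref{eq:lec-gec}, it suffices to show that Breuning's local conjecture $R_{L_w/K_v}=0$ holds for every $v \in \Sigma_p(K)$ (with $w$ any choice of prime of $L$ above $v$), for every rational prime $p$.

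First I would dispose of all rational primes $p$ not dividing $|G|=2\cdot 3^m$: for such $p$, the decomposition group $G_w$ at any $w \mid p$ has order coprime to $p$, so $L/K$ is (at worst) tamely ramified above $p$, and case (i) of Theorem \ref{thm:known-cases-of-ECC} gives $R_{L_w/K_v}=0$. At $p=2$, the tamely ramified places are again covered by case (i); since the $2$-Sylow of $G$ is $C_2$, the wild inertia at any wildly ramified $w \mid 2$ is cyclic of order $2$, and one reduces via the natural tower $L_w/L_w^{P_w}/K_v$ (with $P_w$ the wild inertia) and the known cases of the local conjecture for the resulting tame and small abelian pieces.

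For $p=3$ the hypothesis that $3$ splits completely in $K/\Q$ becomes the crucial input: it forces $K_v = \Q_3$ for every $v \in \Sigma_3(K)$, so $L_w/K_v = L_w/\Q_3$ is a Galois extension of the fixed local field $\Q_3$ whose Galois group is (conjugate to) a subgroup of $G = (C_3)^m \rtimes C_2$. As $\Q_3$ has only finitely many Galois extensions of bounded degree (indeed, one can enumerate those of any fixed Galois group using Jones--Roberts' local fields database or an explicit classification via Eisenstein polynomials), the set of isomorphism classes of completions $L_w/\Q_3$ that can arise is finite. I would enumerate this list and, for each representative, verify $R_{L_w/\Q_3}=0$ computationally by invoking Bley's \textsc{Magma} implementation of Breuning's local conjecture bundled in \cite{debeerst-thesis}.

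The main obstacle is the verification at $p=3$: the completions that arise can be simultaneously non-abelian (so cases (ii) and (iv) of Theorem \ref{thm:known-cases-of-ECC} need not apply), of degree larger than $15$ once $m \geq 2$ (ruling out case (iii)), and wildly ramified (ruling out case (i)), so none of the theoretical cases of Theorem \ref{thm:known-cases-of-ECC} is available in general. The splitting hypothesis is what makes the problem tractable: it reduces verification over an arbitrary base to a uniform finite problem over $\Q_3$ that can be handed to the computer via the Bley/Debeerst code.
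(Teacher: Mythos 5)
Your plan has the right overall shape — reduce to Breuning's local conjecture via \eqref{eq:lec-gec} and handle each rational prime $p$ — and you correctly identify that the hypothesis ``$3$ splits completely in $K/\Q$'' is used to force every completion at $3$ to be $K_v=\Q_3$. But there are two genuine gaps compared to the paper's proof.

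\textbf{The prime $2$.} Your proposed tower argument at $p=2$ (splitting $L_w/K_v$ along the wild inertia subgroup $P_w$ and ``reducing to tame and small abelian pieces'') is not a valid deduction: Breuning's local conjecture has functoriality under restriction and quotients, but there is no result asserting that $R_{N/M}=0$ follows from its vanishing for the steps of an intermediate tower $N/N'/M$. The paper avoids this entirely with an algebraic observation you have missed: by the first claim of Theorem \ref{thm:known-cases-of-ECC}, $T\Omega^{\loc}(L/K,1)$ always lies in $K_{0}(\Z[G],\Q)_{\tors}$, and for $G=(C_3)^m\rtimes C_2$ the group $K_{0}(\Z_{2}[G],\Q_{2})_{\tors}$ is \emph{trivial} (\cite[Lemma 3.10]{MR3461042}). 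So the $2$-part vanishes for free, and the same maximal-order argument kills every $p\nmid |G|$. This reduces the whole theorem to the $3$-part in one stroke.

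\textbf{The prime $3$: no uniform bound.} You write that ``the set of isomorphism classes of completions $L_w/\Q_3$ that can arise is finite,'' but for you this only follows from the bound $[L_w:\Q_3]\leq |G|=2\cdot 3^m$, which grows with $m$. As stated, your proof for each $m$ requires a fresh enumeration of all Galois extensions of $\Q_3$ of degree up to $2\cdot 3^m$ with Galois group a subgroup of $(C_3)^m\rtimes C_2$; this is an unbounded amount of computation across the family, and moreover the database in \cite{MR2137362,MR2194887} only goes up to a fixed degree. The paper's key idea is to bound the decomposition group $H=\Gal(L_w/\Q_3)$ \emph{uniformly in $m$}: if $H\cong (C_3)^n\rtimes C_2$ with $n\geq 3$, then $H$ has $(3^n-1)/2\geq 13$ distinct $S_3$-quotients, hence would give rise to at least $13$ distinct Galois $S_3$-extensions of $\Q_3$; but by \cite[Proposition 4.3]{MR2031413} there are only $6$ such. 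So $n\leq 2$, and after dispatching the abelian and $S_3$ cases via Theorem \ref{thm:known-cases-of-ECC} (ii) and (iii), only the single extension with Galois group $(C_3)^2\rtimes C_2$ remains, which is verified computationally. Without this counting step, your verification at $p=3$ does not actually terminate as a single argument covering all $m$.
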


\begin{remark}
Breuning \cite{MR2031413} was the first to show that the global epsilon constant conjecture holds for all $S_{3}$-extensions of $\Q$.
\end{remark}

\begin{proof}[Proof of Theorem \ref{thm:GEGG-for-C3m:C2-extensions}]
We identify $G$ with $\Gal(L/K)$.
By Theorem \ref{thm:known-cases-of-ECC} we have
\[
T\Omega^{\loc}(L/K,1) \in K_{0}(\Z[G],\Q)_{\tors}.
\]
Moreover, $K_{0}(\Z_{2}[G],\Q_{2})_{\tors}$ is trivial by \cite[Lemma 3.10]{MR3461042}.
Hence it suffices to show that $T\Omega^{\loc}(L/K,1)_{3} =0$ in $K_{0}(\Z_{3}[G],\Q_{3})_{\tors}$.
Let $w$ be a fixed place of $L$ above $3$. 
Then by \eqref{eq:lec-gec} and the hypothesis that $3$ splits completely in $K/\Q$,
we are reduced to showing that $R_{L_{w}/\Q_{3}}=0$ in $K_{0}(\Z_{3}[H],\Q_{3})_{\tors}$
where $H=\Gal(L_{w}/\Q_{3})$.
If $H$ is abelian or $H \cong S_{3}$ then we are done by Theorem \ref{thm:known-cases-of-ECC} (ii) or (iii), respectively.
Otherwise, $H \cong (C_{3})^{n} \rtimes C_{2}$ for some $n \geq 2$.
If $n \geq 3$ then $H$ has $(3^{n}-1)/2 \geq 13$ quotients isomorphic to $S_{3}$
(one for each of the quotients of $(C_{3})^{n}$ of order $3$).
However, by \cite[Proposition 4.3]{MR2031413} there are only $6$ Galois extensions $M/\Q_{3}$ with $\Gal(M/\Q_{3}) \cong S_{3}$
(also see \cite{MR2194887}).
Therefore we are now reduced to the case $H=(C_{3})^{2} \rtimes C_{2}$.
From the database of local fields of Jones and Roberts \cite{MR2137362,MR2194887},
we see that there is precisely one Galois extension $M/\Q_{3}$ with $\Gal(M/\Q_{3}) \cong (C_{3})^{2} \rtimes C_{2}$,
namely the Galois closure of the extension with generating polynomial 
$\mathtt{x^{9}+ 3x^{3}+ 9x^{2}+ 9x+ 3}$. 
From the number fields database of Kl\"uners and Malle \cite{MR1901356}, we see that the number field 
with generating polynomial $\mathtt{x^{18} + 45x^{12} + 27x^{6} + 27}$ is a global representative of $M$ of minimal degree.
Applying the algorithm of Bley and Debeerst \cite{MR3073206}  shows that $R_{M/\Q_{3}}=0$.
(The algorithm is implemented in \textsc{Magma} \cite{MR1484478}
with source code bundled in Debeerst's PhD thesis \cite{debeerst-thesis}.)
\end{proof}

\subsection{A common notion of rationality}
We note that all six notions of `rationality' encountered in this article are in fact equivalent.

\begin{prop-def}\label{prop-def:LTC-vs-ETNC-rationality}
Let $L/K$ be a finite Galois extension of number fields and let $G=\Gal(L/K)$.
Then the following are equivalent:
\begin{enumerate}
\item $\LTC^{\rat}(L/K,0)$; 
\tabto{4.5cm} \emph{(ii)} $\LTC^{\rat}(L/K,1)$;
\setcounter{enumi}{2}
\item $\ETNC^{\rat}(L/K,0)$;
\tabto{4.5cm} \emph{(iv)} $\ETNC^{\rat}(L/K,1)$;
\setcounter{enumi}{4}
\item Stark's conjecture at $s=0$ for every $\chi \in R_{\C}^+(G)$;
\item Stark's conjecture at $s=1$ for every $\chi \in R_{\C}^+(G)$.
\end{enumerate}
We denote these equivalent conditions by $\Rat(L/K)$.
\end{prop-def}

\begin{proof}
As $T\Omega^{\loc}(L/K,1)$ belongs to $K_{0}(\Z[G],\Q)$, the equivalence of (i) and (ii)
follows from \eqref{eq:TOmega-at-0-and-1}.
Items (iii)-(vi) are equivalent by Proposition \ref{prop:rationality}.
Finally, \cite[Proposition 3.6]{MR2371375} says that (ii) and (vi) are equivalent.
\end{proof}

\begin{corollary}\label{cor:LTC-0-1-relations}
Let $L/K$ be a finite Galois extension of number fields and suppose that $\Rat(L/K)$ holds. 
Let $G=\Gal(L/K)$ and let $p$ be a prime. 
\begin{enumerate}
\item $\LTC_{p}^{\tors}(L/K,0)$ holds if and only if $\LTC_{p}^{\tors}(L/K,1)$ holds.
\item If $p \nmid |G|$ then the following are equivalent: \\
$\LTC_{p}^{\tors}(L/K,0)$, $\LTC_{p}^{\tors}(L/K,1)$, $\LTC_{p}(L/K,0)$ and $\LTC_{p}(L/K,1)$.
\end{enumerate}
\end{corollary}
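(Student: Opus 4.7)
The plan is to extract both parts of the corollary from the identity
\eqref{eq:TOmega-at-0-and-1} by passing to $p$-parts and using the key fact, recorded in Theorem~\ref{thm:known-cases-of-ECC}, that $T\Omega^{\loc}(L/K,1)_{p}$ always lies in $K_{0}(\Z_{p}[G],\Q_{p})_{\tors}$. Under the hypothesis $\Rat(L/K)$, Proposition-Definition \ref{prop-def:LTC-vs-ETNC-rationality} yields that both $T\Omega(L/K,0)$ and $T\Omega(L/K,1)$ lie in $K_{0}(\Z[G],\Q)$, so by \eqref{eq:p-part-decomposition} all three elements in \eqref{eq:TOmega-at-0-and-1} possess well-defined $p$-parts.

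For part (i), I would take the $p$-part of the equation \eqref{eq:TOmega-at-0-and-1}. The involution $\psi_{G}^{\ast}$ is induced by the $\Z$-linear involution $g \mapsto g^{-1}$ of $\Z[G]$ (see \cite[\S 2.1.4]{MR2371375}), hence preserves $K_{0}(\Z[G],\Q)$, is compatible with the decomposition \eqref{eq:p-part-decomposition}, and restricts to an involution of each $K_{0}(\Z_{p}[G],\Q_{p})$. Since any automorphism of an abelian group preserves its torsion subgroup, $\psi_{G}^{\ast}$ stabilises $K_{0}(\Z_{p}[G],\Q_{p})_{\tors}$. The $p$-part of \eqref{eq:TOmega-at-0-and-1} therefore reads
\[
\psi_{G}^{\ast}(T\Omega(L/K,0)_{p}) - T\Omega(L/K,1)_{p} = T\Omega^{\loc}(L/K,1)_{p} \in K_{0}(\Z_{p}[G],\Q_{p})_{\tors},
\]
and hence $T\Omega(L/K,1)_{p}$ is torsion if and only if $\psi_{G}^{\ast}(T\Omega(L/K,0)_{p})$ is torsion, if and only if $T\Omega(L/K,0)_{p}$ is torsion. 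This yields (i).

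For part (ii), when $p \nmid |G|$ Maschke's theorem together with the Wedderburn decomposition of $\Q_{p}[G]$ shows that $\Z_{p}[G]$ is a maximal $\Z_{p}$-order in $\Q_{p}[G]$, isomorphic to a finite product of matrix rings $M_{n_{i}}(\mathcal{O}_{i})$ where each $\mathcal{O}_{i}$ is the ring of integers of an unramified extension of $\Q_{p}$. Morita invariance reduces the computation of $K_{0}(\Z_{p}[G],\Q_{p})$ to a finite product of groups of the form $K_{0}(\mathcal{O}_{i}, F_{i}) \cong F_{i}^{\times}/\mathcal{O}_{i}^{\times} \cong \Z$, so $K_{0}(\Z_{p}[G],\Q_{p})$ is torsion-free. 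Consequently $\LTC_{p}^{\tors}(L/K,r)$ is equivalent to $\LTC_{p}(L/K,r)$ for each $r \in \{0,1\}$, and combining this with (i) gives the four-fold equivalence in (ii).

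There is no substantial obstacle: the whole argument is essentially a bookkeeping exercise with \eqref{eq:TOmega-at-0-and-1} once one has Theorem~\ref{thm:known-cases-of-ECC} in hand. The only points requiring care are verifying that $\psi_{G}^{\ast}$ respects the $p$-part decomposition and stabilises the torsion subgroup (both immediate from its definition as an automorphism defined over $\Z$), and the identification of $K_{0}(\Z_{p}[G],\Q_{p})$ as torsion-free in the tame case.
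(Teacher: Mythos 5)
Your proof is correct and follows essentially the same route as the paper: project \eqref{eq:TOmega-at-0-and-1} to $p$-parts using the rationality from Proposition / Definition \ref{prop-def:LTC-vs-ETNC-rationality} and the fact that $T\Omega^{\loc}(L/K,1)_{p}$ is always torsion, then invoke triviality of $K_{0}(\Z_{p}[G],\Q_{p})_{\tors}$ for $p \nmid |G|$ because $\Z_{p}[G]$ is a maximal order. The only difference is that you spell out why $\psi_{G}^{\ast}$ respects $p$-parts and preserves torsion and why the torsion subgroup vanishes via Morita invariance, where the paper treats these as implicit or cites a reference.
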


\begin{proof}
By Proposition / Definition \ref{prop-def:LTC-vs-ETNC-rationality},
$T\Omega(L/K,r) \in K_{0}(\Z[G],\Q)$ for $r \in \{0,1\}$.
Recall that we always have $T\Omega^{\loc}(L/K,1) \in K_{0}(\Z[G],\Q)$.
Thus we obtain claim (i) by projecting \eqref{eq:TOmega-at-0-and-1} from $K_{0}(\Z[G],\Q)$ to $K_{0}(\Z_{p}[G],\Q_{p})$
and using that $T\Omega^{\loc}(L/K,1)_{p} \in K_{0}(\Z_{p}[G],\Q_{p})_{\tors}$.
Now assume $p \nmid |G|$. Then $\Z_{p}[G]$ is a maximal $\Z_{p}$-order and so $K_{0}(\Z_{p}[G],\Q_{p})_{\tors}$ is trivial
(see \cite[Theorem 2.4 (ii)]{MR2564571}, for instance).
Thus if $r \in \{0,1\}$ then $\LTC_{p}^{\tors}(L/K,r)$ is equivalent to $\LTC_{p}(L/K,r)$, giving claim (ii).
\end{proof}

\subsection{The relation between the leading term conjectures and the ETNC}

\begin{prop}\label{prop:LTC-vs-ETNC-at-0-and-1}
Let $L/K$ be a finite Galois extension of number fields and let $p$ be a prime.
Suppose that $\Rat(L/K)$ holds. Then
\begin{enumerate}
\item $\LTC^{\tors}_{p}(L/K,0)$ holds if and only if $\ETNC^{\tors}_{p}(L/K,0)$ holds;
\item $\LTC_{p}(L/K,0)$ holds if and only if $\ETNC_{p}(L/K,0)$ holds.
\end{enumerate}
Suppose further that \emph{(\textasteriskcentered)} there is a totally complex Galois extension $M$ of $\Q$ such that
$L \subset M$ and $\Leo(M,p)$ holds. Then
\begin{enumerate}
\setcounter{enumi}{2}
\item $\LTC^{\tors}_{p}(L/K,1)$ holds if and only if $\ETNC^{\tors}_{p}(L/K,1)$ holds;
\item $\LTC_{p}(L/K,1)$ holds if and only if $\ETNC_{p}(L/K,1)$ holds.
\end{enumerate}
\end{prop}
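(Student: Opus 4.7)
The plan is to derive all four equivalences from the known global-level identities relating the invariants of the LTCs and the ETNC, and then to pass to $p$-parts via the decomposition \eqref{eq:p-part-decomposition}.

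For items (i) and (ii), the starting point is the theorem of Burns \cite{MR1863302}, which (up to the normalisations fixed in \S \ref{subsec:LTCs}) gives the identity
\[
T\Omega(L/K,0) \;=\; T\Omega(\Q(0)_{L}, \Z[G]) \quad \text{in } K_{0}(\Z[G],\R).
\]
Under the hypothesis $\Rat(L/K)$, items (i) and (iii) of Proposition / Definition \ref{prop-def:LTC-vs-ETNC-rationality} show that both sides of this identity lie in the rational subgroup $K_{0}(\Z[G],\Q)$. Projecting the identity onto $K_{0}(\Z_{p}[G],\Q_{p})$ via \eqref{eq:p-part-decomposition} yields $T\Omega(L/K,0)_{p} = T\Omega(\Q(0)_{L},\Z[G])_{p}$, and then further projecting onto $K_{0}(\Z_{p}[G],\Q_{p})_{\tors}$ (respectively, onto zero) gives (i) (respectively, (ii)).

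For items (iii) and (iv), the analogous global identity
\[
T\Omega(L/K,1) \;=\; T\Omega(\Q(1)_{L}, \Z[G]) \quad \text{in } K_{0}(\Z[G],\R)
\]
is due to Breuning and Burns \cite{MR2804251} under the assumption that Leopoldt's conjecture holds at \emph{every} rational prime. The strategy is to audit that proof and show that, at a single prime $p$, the corresponding equality in $K_{0}(\Z_{p}[G],\Q_{p})$ follows from the much weaker $p$-local assumption $\Leo(M,p)$ of hypothesis (\textasteriskcentered). The Breuning--Burns argument proceeds via a comparison isomorphism between certain cohomological invariants of $\Q(0)_{L}$ and $\Q(1)_{L}$ whose well-definedness and triviality are governed by the non-vanishing of the $\rho$-parts of the $p$-adic regulator comparison $\mu_{p}$ of \S \ref{subsec:comparison-period} (cf.\ Remark \ref{rmk:rho-parts-of-Leo}). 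Crucially, their construction passes through a totally complex Galois overfield in order to obtain the requisite cohomological complexes in a uniform manner. Hypothesis (\textasteriskcentered) supplies precisely such an overfield $M$, and by Theorem \ref{thm:leo-results}(iv) the hypothesis $\Leo(M,p)$ descends to $\Leo(L',p)$ for every intermediate field $L'$ between $\Q$ and $M$, so Leopoldt's conjecture at $p$ is available at every stage of the descent.

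The main obstacle will be the careful $p$-localisation of the Breuning--Burns proof: their argument is formulated globally and invokes Leopoldt's conjecture at every prime, so one must track each appearance of this global assumption and verify that restricting to the $p$-part of the invariants requires only the $p$-local Leopoldt statement for $M$ (together with $\Rat(L/K)$ to ensure that the relevant elements of $K_{0}(\Z[G],\R)$ actually lie in $K_{0}(\Z[G],\Q)$ and so decompose prime-by-prime). Once the identity $T\Omega(L/K,1)_{p} = T\Omega(\Q(1)_{L},\Z[G])_{p}$ in $K_{0}(\Z_{p}[G],\Q_{p})$ is established, items (iii) and (iv) follow by exactly the same projection argument as for (i) and (ii).
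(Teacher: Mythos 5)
Your outline correctly identifies the two known global comparisons as the engine, and correctly spots that for $s=1$ the totally complex overfield $M$ and $\Leo(M,p)$ are the crucial ingredients. However, two points need repair.

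First, for items (i) and (ii), the relevant fact is \cite[(29)]{MR1981031}: the elements $T\Omega(\Q(0)_{L},\Z[\Gal(L/K)])$ and $T\Omega(L/K,0)$ agree \emph{only up to an involution} of $K_{0}(\Z[\Gal(L/K)],\R)$, not on the nose. Since that involution preserves $K_{0}(\Z[\Gal(L/K)],\Q)$, is compatible with the $p$-part decomposition \eqref{eq:p-part-decomposition}, preserves torsion, and fixes $0$, your projection argument still goes through, but your stated identity should be adjusted accordingly.

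Second, and more substantively, for items (iii) and (iv) your plan --- take the global identity $T\Omega(L/K,1)=T\Omega(\Q(1)_{L},\Z[G])$ and ``audit'' Breuning--Burns to see that the $p$-part only needs $\Leo(M,p)$ --- leaves the key mechanism unstated. The identity you start from is itself only known when Leopoldt holds at \emph{all} primes, so there is nothing yet to $p$-localise for $L/K$ directly. What the paper's proof actually does is the following two-step argument, and it is worth being explicit about both stages. (a) The proof of \cite[Proposition 5.1]{MR2804251}, applied to the (totally complex) extension $M/\Q$, shows that $\Leo(M,p)$ already yields the equality $j_{*}^{G}(T\Omega(\Q(1)_{M},\Z[G]))=j_{*}^{G}(T\Omega(M/\Q,1))$ in $K_{0}(\Z_{p}[G],\C_{p})$, where $G=\Gal(M/\Q)$ and $j:\R\rightarrow\C_{p}$ is an embedding; this is a genuinely $p$-local statement because $j_{*}^{G}$ is far from injective. (b) One then descends from $M/\Q$ to $L/K$ by applying $\quot^{H}_{Q}\circ\res^{G}_{H}$ (with $H=\Gal(M/K)$, $Q=\Gal(L/K)$) and invoking the functoriality of both $T\Omega(\cdot,1)$ and $T\Omega(\Q(1)_{\cdot},\Z[\cdot])$ together with the commutativity of $\res$, $\quot$ with $j_{*}$. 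Only after (a) and (b) does one appeal to $\Rat(L/K)$ to land in $K_{0}(\Z[Q],\Q)$, recall that $j_{*}^{Q}$ restricted there is the canonical projection to $K_{0}(\Z_{p}[Q],\Q_{p})$, and conclude $T\Omega(\Q(1)_{L},\Z[Q])_{p}=T\Omega(L/K,1)_{p}$. Your write-up never supplies step (b), and frames step (a) as an informal re-reading of the Breuning--Burns proof rather than as an application of its $j_{*}$-level conclusion to the auxiliary extension $M/\Q$. Filling this in would require you to spell out the commutative square relating $\res$/$\quot$ and $j_{*}$, as in the paper.
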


\begin{proof}
By \cite[(29)]{MR1981031} the elements $T\Omega(\Q(0)_{L}, \Z[\Gal(L/K)])$ and $T\Omega(L/K,0)$ are equal up to an involution of
 $K_{0}(\Z[\Gal(L/K)], \R)$.
Hence we obtain claims (i) and (ii).

Let $G=\Gal(M/\Q)$, let $H=\Gal(M/K)$ and let $Q=\Gal(L/K)$.
Fix an embedding of fields $j:\R \rightarrow \C_{p}$.
There is a commutative diagram
\[
\xymatrix{
K_{0}(\Z[G],\R) \ar[rr]^{j_{*}^{G}} \ar[d]_{\quot^{H}_{Q} \circ \res_{H}^{G}} & & K_{0}(\Z_{p}[G],\C_{p}) \ar[d]^{\quot^{H}_{Q} \circ \res_{H}^{G}} \\
K_{0}(\Z[Q],\R) \ar[rr]^{j_{*}^{Q}} & & K_{0}(\Z_{p}[Q],\C_{p})
} 
\]
where the horizontal maps are induced by $j$. Moreover, under the assumption (\textasteriskcentered),
the proof of \cite[Proposition 5.1]{MR2804251} shows that $j_{*}^{G}(T\Omega(\Q(1)_{M}, \Z[G]))=j_{*}^{G}(T\Omega(M/\Q,1))$.
Therefore by the functoriality properties of the conjectures with respect to 
restriction and quotient maps, we have
\begin{align*}
j_{*}^{Q}(T\Omega(L/K,1))
&= j_{*}^{Q}(\quot^{H}_{Q} \circ \res_{H}^{G}(T\Omega(M/\Q,1)))\\
&= \quot^{H}_{Q} \circ \res_{H}^{G}(j_{*}^{G}(T\Omega(M/\Q,1)))\\ 
&= \quot^{H}_{Q} \circ \res_{H}^{G}(j_{*}^{G}(T\Omega(\Q(1)_{M}, \Z[G])))\\
&= j_{*}^{Q}(\quot^{H}_{Q} \circ \res_{H}^{G}(T\Omega(\Q(1)_{M}, \Z[G])))\\
&= j_{*}^{Q}(T\Omega(\Q(1)_{L}, \Z[Q])).
\end{align*}
The assumption that $\Rat(L/K)$ holds means that 
both $T\Omega(\Q(1)_{L}, \Z[Q])$ and $T\Omega(L/K,1)$ belong to $K_{0}(\Z[Q],\Q)$.
Furthermore, the map $j_{*}^{Q}$ restricts to the canonical projection $K_{0}(\Z[Q],\Q) \rightarrow K_{0}(\Z_{p}[Q],\Q_{p})$
and so we conclude that $T\Omega(\Q(1)_{L}, \Z[Q])_{p}=T\Omega(L/K,1)_{p}$ in $K_{0}(\Z_{p}[Q],\Q_{p})$.
Hence we obtain claims (iii) and (iv).
\end{proof}

\begin{remark}
\cite[Proposition 5.1]{MR2804251} says that if $M/\Q$ is a finite totally complex Galois extension such that 
$\Leo(M,p)$ holds for \emph{all} primes $p$ then $\LTC(M/\Q,1)$ and $\ETNC(M/\Q,1)$ are equivalent
(no rationality assumption is needed). 
By contrast, Proposition \ref{prop:LTC-vs-ETNC-at-0-and-1} (iii) \& (iv) are useful in proving
`prime-by-prime' results in cases where $\Rat(L/K)$ is known. 
\end{remark}

\begin{remark}\label{rmk:Leo-real-vs-complex}
Let $E/\Q$ be finite totally real Galois extension and let $M$ be a totally complex quadratic extension of $E$. 
Then $\Leo(M,p)$ holds if and only if $\Leo(E,p)$ holds by \cite[Corollary 10.3.11]{MR2392026}.
Moreover, it is straightforward to see that $M$ can be chosen such that $M/\Q$ is Galois.
Therefore if $L=E$ is totally real and $K=\Q$ in Proposition \ref{prop:LTC-vs-ETNC-at-0-and-1}
then one only needs to assume that $\Leo(E,p)$ holds in order to ensure that (\textasteriskcentered) is satisfied. 
It seems plausible that (\textasteriskcentered) could always be weakened to assuming $\Leo(L,p)$,
but this would require a careful generalisation of the proofs of \cite{MR2804251}.
\end{remark}

\section{New evidence for the leading term conjectures}

\subsection{Groups $G$ with the property that $\Perm(G)= R_{\C}(G)$}

\begin{theorem}\label{thm:LTC-1-from-Leo-for-virtual-perm-chars}
Let $E/F$ be a finite Galois extension of totally real number fields such that $G:=\Gal(E/F)$ satisfies $\Perm(G)= R_{\C}(G)$
and $E/\Q$ is Galois. Let $p$ be an odd prime such that $\Leo(E,p)$ holds and $\mu_{p}(E)=0$.
\begin{enumerate}
\item $\LTC^{\tors}(E/F,0)$, $\LTC^{\tors}(E/F,1)$ and  $\LTC_{p}(E/F,1)$ all hold.
\item If $p$ satisfies any of the conditions of Theorem \ref{thm:known-cases-of-ECC} then $\LTC_{p}(E/F,0)$ also holds.
\end{enumerate}
\end{theorem}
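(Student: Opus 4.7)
The plan is to assemble three ingredients established earlier in the paper: Theorem \ref{thm:known-cases-of-LTCs}(ii) for the torsion leading term conjectures, Corollary \ref{cor:ETNC-at-s=1-perm-chars} for the equivariant Tamagawa number conjecture at $s=1$ under our hypotheses, and the bridge between $\LTC$ and $\ETNC$ provided by Proposition \ref{prop:LTC-vs-ETNC-at-0-and-1} together with the identity \eqref{eq:TOmega-at-0-and-1}. No fundamentally new arithmetic input is required; the argument is simply an orchestration of these tools.

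For part (i), the hypothesis $\Perm(G)=R_{\C}(G)$ forces $\Char_{\Q}(G)=R_{\C}(G)$ via the chain of containments $\Perm(G)\subset R_{\Q}(G)\subset \Char_{\Q}(G)\subset R_{\C}(G)$, so Theorem \ref{thm:known-cases-of-LTCs}(ii) immediately yields $\LTC^{\tors}(E/F,0)$ and $\LTC^{\tors}(E/F,1)$. For $\LTC_{p}(E/F,1)$, I would first apply Corollary \ref{cor:ETNC-at-s=1-perm-chars} to obtain $\ETNC_{p}(E/F,1)$ and then transfer it across via Proposition \ref{prop:LTC-vs-ETNC-at-0-and-1}(iv). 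The hypothesis $\Rat(E/F)$ required there follows from Remark \ref{rmk:Stark-s=01-for-rational-valued-characters}, since every character of $G$ is now rational-valued, and condition (\textasteriskcentered) is verified by setting $M:=E(\sqrt{-1})$, a CM quadratic extension of $E$ that is Galois over $\Q$ (available because $E/\Q$ is Galois and totally real) and for which $\Leo(M,p)$ is equivalent to $\Leo(E,p)$ by Remark \ref{rmk:Leo-real-vs-complex}.

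For part (ii), the hypothesis that $p$ falls under one of the cases of Theorem \ref{thm:known-cases-of-ECC} gives $T\Omega^{\loc}(E/F,1)_{p}=0$. Since $\Rat(E/F)$ holds (as established above), both $T\Omega(E/F,0)$ and $T\Omega(E/F,1)$ lie in $K_{0}(\Z[G],\Q)$, so one may take $p$-parts in the identity \eqref{eq:TOmega-at-0-and-1}. Substituting the vanishing $T\Omega(E/F,1)_{p}=0$ from part (i) then yields $\psi_{G}^{\ast}(T\Omega(E/F,0)_{p})=0$, and applying the involution $\psi_{G}^{\ast}$ once more delivers $\LTC_{p}(E/F,0)$. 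The only genuinely substantive step in the whole argument is the appeal to Corollary \ref{cor:ETNC-at-s=1-perm-chars}, which itself rests on the equivariant Iwasawa main conjecture and the descent theory of Burns and Venjakob; everything else amounts to reference chasing.
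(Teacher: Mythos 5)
Your proof is correct and follows the same route as the paper's: Theorem \ref{thm:known-cases-of-LTCs}(ii) for the torsion parts, Corollary \ref{cor:ETNC-at-s=1-perm-chars}(iii) for $\ETNC_{p}(E/F,1)$, then Proposition \ref{prop:LTC-vs-ETNC-at-0-and-1}(iv) together with Remark \ref{rmk:Leo-real-vs-complex} to pass to $\LTC_{p}(E/F,1)$, and finally Theorem \ref{thm:known-cases-of-ECC} with \eqref{eq:TOmega-at-0-and-1} for part (ii). The only cosmetic difference is that you make the paper's appeal to Remark \ref{rmk:Leo-real-vs-complex} concrete by taking $M=E(\sqrt{-1})$, and you spell out the $p$-part bookkeeping in part (ii) that the paper leaves implicit.
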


\begin{remark}
Recall that the condition $\Perm(G)= R_{\C}(G)$ was discussed in Remark \ref{rmk:all-chars-perm-chars}.
Note that Theorem \ref{thm:LTC-1-from-Leo-for-virtual-perm-chars} is really only interesting in the case that 
$p$ divides $|G|$ (if $p \nmid |G|$ then, as is well-known to experts, Theorem \ref{thm:known-cases-of-LTCs} (ii)
combined with Corollary \ref{cor:LTC-0-1-relations} (ii) gives a more general and unconditional result).
\end{remark}

\begin{proof}[Proof of Theorem \ref{thm:LTC-1-from-Leo-for-virtual-perm-chars}]
Since $\Perm(G)= R_{\C}(G)$ implies that $\Char_{\Q}(G)=R_{\C}(G)$, both $\LTC^{\tors}(E/F,0)$ and $\LTC^{\tors}(E/F,1)$
hold by Theorem \ref{thm:known-cases-of-LTCs} (ii).
Moreover, $\ETNC_{p}(E/F,1)$ holds by Corollary \ref{cor:ETNC-at-s=1-perm-chars} (iii).
Hence Proposition \ref{prop:LTC-vs-ETNC-at-0-and-1} (iv) and Remark \ref{rmk:Leo-real-vs-complex} together show that 
$\LTC_{p}(E/F,1)$ holds. The final claim now follows from Theorem \ref{thm:known-cases-of-ECC}.
\end{proof}

\begin{corollary}\label{cor:rational-frob-inversion}
Let $m$ be a positive integer and let $G = (C_{3})^{m} \rtimes C_{2}$ where $C_{2}$ acts on $(C_{3})^{m}$ by inversion
(in the case $m=1$ we have $G \cong S_{3}$).
Let $E/F$ be a Galois extension of totally real number fields such that $\Gal(E/F) \cong G$ and $E/\Q$ is Galois.
If $\Leo(E,3)$ holds and $\mu_{3}(E)=0$ then $\LTC(E/F,1)$ holds;
if we further suppose either that $3$ splits completely in $F/\Q$ or that
$3$ satisfies any of the conditions of Theorem \ref{thm:known-cases-of-ECC} then $\LTC(E/F,0)$ also holds.
\end{corollary}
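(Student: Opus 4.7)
The plan is to apply Theorem~\ref{thm:LTC-1-from-Leo-for-virtual-perm-chars} to $E/F$, so the first task is to verify its hypothesis $\Perm(G) = R_\C(G)$ for $G = (C_3)^m \rtimes C_2$. Writing $A := (C_3)^m$ for the normal abelian subgroup, Clifford theory together with the observation that the $C_2$-action on $\Irr_\C(A) \setminus \{\mathbbm{1}_A\}$ by inversion is fixed-point-free yields the complete list of irreducibles of $G$: the trivial character $\mathbbm{1}_G$, the sign character $\epsilon$ inflated from $G/A \cong C_2$, and the degree-two characters $\rho_\chi := \ind_A^G \chi$ indexed by the orbits $\{\chi, \chi^{-1}\}$ of non-trivial characters of $A$. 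The identity $\ind_A^G \mathbbm{1}_A = \mathbbm{1}_G + \epsilon$ exhibits $\epsilon$ as a virtual permutation character. For each non-trivial $\chi$, the kernel $K := \ker \chi$ is automatically $C_2$-stable (every subgroup of $A$ is closed under inversion), so $K \rtimes C_2$ is a subgroup of $G$ of index $3$; Mackey's formula gives $\res_A^G \ind_{K \rtimes C_2}^G \mathbbm{1} = \ind_K^A \mathbbm{1} = \mathbbm{1}_A + \chi + \chi^{-1}$, and comparing with the classification of irreducibles forces the $3$-dimensional induced character to decompose as $\mathbbm{1}_G + \rho_\chi$. Hence every irreducible character of $G$ lies in $\Perm(G)$.

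With this in hand, Theorem~\ref{thm:LTC-1-from-Leo-for-virtual-perm-chars}(i), applied using $\Leo(E,3)$, $\mu_3(E) = 0$, and the assumption that $E/\Q$ is Galois, yields $\LTC^{\tors}(E/F,0)$, $\LTC^{\tors}(E/F,1)$, and $\LTC_3(E/F,1)$. To upgrade $\LTC_3(E/F,1)$ to $\LTC(E/F,1)$ it suffices, by the decomposition \eqref{eq:p-part-decomposition}, to show $T\Omega(E/F,1)_p = 0$ for every prime $p \neq 3$. Since $T\Omega(E/F,1) \in K_0(\Z[G], \Q)_{\tors}$, this reduces to the vanishing of $K_0(\Z_p[G], \Q_p)_{\tors}$ at such primes. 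For $p \neq 2, 3$, we have $p \nmid |G| = 2 \cdot 3^m$, so $\Z_p[G]$ is a maximal $\Z_p$-order and the vanishing is standard (\cite[Theorem 2.4 (ii)]{MR2564571}). For $p = 2$ the vanishing is \cite[Lemma 3.10]{MR3461042}, the very input already exploited in the proof of Theorem~\ref{thm:GEGG-for-C3m:C2-extensions}. This establishes $\LTC(E/F,1)$.

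For $\LTC(E/F,0)$ we treat the two supplementary hypotheses separately. If $3$ splits completely in $F/\Q$, Theorem~\ref{thm:GEGG-for-C3m:C2-extensions} applies directly to give $T\Omega^{\loc}(E/F,1) = 0$, and the identity \eqref{eq:TOmega-at-0-and-1} then identifies $\LTC(E/F,0)$ with $\LTC(E/F,1)$, which was just proved. If instead $3$ satisfies one of the conditions of Theorem~\ref{thm:known-cases-of-ECC}, then Theorem~\ref{thm:LTC-1-from-Leo-for-virtual-perm-chars}(ii) supplies $\LTC_3(E/F,0)$, and the very same prime-by-prime argument as in the previous paragraph (now applied to $T\Omega(E/F,0)$ starting from $\LTC^{\tors}(E/F,0)$) yields $\LTC(E/F,0)$.

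The only non-routine step is Step~1, the explicit character-theoretic verification that $\Perm(G) = R_\C(G)$; once this combinatorial input is available, the remainder of the proof is a formal assembly of the previously established descent machinery (Theorem~\ref{thm:LTC-1-from-Leo-for-virtual-perm-chars}), the local $K$-theoretic vanishing statements at $p = 2$ and at primes $p \nmid |G|$, and the epsilon-constant results of Theorems~\ref{thm:known-cases-of-ECC} and~\ref{thm:GEGG-for-C3m:C2-extensions}.
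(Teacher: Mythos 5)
Your proof is correct and follows essentially the same route as the paper: establish $\Perm(G) = R_{\C}(G)$, invoke Theorem \ref{thm:LTC-1-from-Leo-for-virtual-perm-chars}, handle the primes $\ell \neq 3$ via the vanishing of $K_{0}(\Z_{\ell}[G],\Q_{\ell})_{\tors}$ (maximal order for $\ell \nmid |G|$, \cite[Lemma 3.10]{MR3461042} for $\ell=2$), and use Theorem \ref{thm:GEGG-for-C3m:C2-extensions} or Theorem \ref{thm:known-cases-of-ECC} for the passage to $s=0$. The only stylistic difference is that the paper disposes of the character-theoretic step by observing each irreducible of $G$ inflates from $C_{2}$ or an $S_{3}$-quotient (for which $\Perm = R_{\C}$ is already recorded in Remark \ref{rmk:all-chars-perm-chars}), whereas you rederive the $S_{3}$ decomposition explicitly via Clifford theory and Mackey's formula; both are valid and amount to the same computation.
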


\begin{proof}
Let $N$ denote the Sylow $3$-subgroup of $G$.
Note that every character in $\Irr_{\C}(G)$ is lifted from either a linear character of $G/N \cong C_{2}$ or an irreducible degree $2$ 
character of a copy of $C_{3} \rtimes C_{2} \cong S_{3}$ (one for each of the $(3^{m}-1)/2$ quotients of $(C_{3})^{m}$ of order $3$).
Since $\Perm(C_{2})=R_{\C}(C_{2})$ and $\Perm(S_{3})=R_{\C}(S_{3})$, this shows that $\Perm(G)=R_{\C}(G)$.
Thus we can apply Theorem \ref{thm:LTC-1-from-Leo-for-virtual-perm-chars} with $p=3$.
The desired result now follows from Corollary \ref{cor:LTC-0-1-relations} (ii) and
the fact that  $K_{0}(\Z_{2}[G],\Q_{2})_{\tors}$ is trivial (see \cite[Lemma 3.10]{MR3461042});
if $3$ splits completely in $F/\Q$ we also use Theorem \ref{thm:GEGG-for-C3m:C2-extensions}.
\end{proof}

\begin{corollary}\label{cor:LTC-for-every-Galois-group}
Let $G$ be a finite group. There exist infinitely many Galois extensions of totally real number fields $E/F$ with $\Gal(E/F) \cong G$
such that, if $\Leo(E,p)$ holds and $\mu_{p}(E)=0$ for all odd prime divisors $p$ of $|G|$, then for $r \in \{0,1\}$ both
$\LTC^{\tors}(E/F,r)$ and $\LTC_{\odd}(E/F,r)$ hold 
(in fact, if $|G|$ is odd then $\LTC(E/F,r)$ holds).
\end{corollary}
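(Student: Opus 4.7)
The plan is to embed $G$ into the symmetric group $S_{n}$ via Cayley's theorem (so $n = |G|$) and to realise $S_{n}$ as the Galois group of a totally real extension $L/\Q$. By applying Hilbert irreducibility to suitable families of totally real degree-$n$ polynomials with generic Galois group $S_{n}$, one produces infinitely many totally real Galois extensions $L/\Q$ with $\Gal(L/\Q) \cong S_{n}$, and by a standard refinement one may additionally require that $L/\Q$ is unramified at any prescribed finite set of primes (in particular, at every odd prime divisor of $|G|$). For each such $L$, setting $E := L$ and $F := L^{G}$ gives a totally real Galois extension with $\Gal(E/F) \cong G$; these extensions furnish the required infinite family. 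The crucial feature of the construction is that $\Gal(E/\Q) \cong S_{n}$ satisfies both $\Char_{\Q}(S_{n}) = R_{\C}(S_{n})$ (all complex characters of $S_{n}$ are rational-valued) and $\Perm(S_{n}) = R_{\C}(S_{n})$ (by the Young rule).

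First, Theorem \ref{thm:known-cases-of-LTCs} (ii) applied to the extension $E/\Q$ gives $\LTC^{\tors}(E/\Q, r)$ for $r \in \{0, 1\}$. By the functoriality of $T\Omega(\cdot, r)$ with respect to restriction, $T\Omega(E/F, r) = \res^{S_{n}}_{G}(T\Omega(E/\Q, r))$; since $\res^{S_{n}}_{G}$ preserves torsion, this yields $\LTC^{\tors}(E/F, r)$. Because $K_{0}(\Z_{p}[G], \Q_{p})_{\tors} = 0$ for every prime $p \nmid |G|$, one automatically obtains $\LTC_{p}(E/F, r)$ for every such $p$. Now fix an odd prime divisor $p$ of $|G|$ and assume $\Leo(E, p)$ and $\mu_{p}(E) = 0$. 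For any $\rho \in R_{\C_{p}}^{+}(G)$ one has $\ind_{G}^{S_{n}} \rho \in R_{\C_{p}}^{+}(S_{n}) = \Perm(S_{n})$, so Proposition \ref{prop:leo-implies-p-adic-Stark-for-virtual-perm-chars} applied to $E/\Q$ (using $\Leo(E, p)$) yields the $p$-adic Stark conjecture at $s = 1$ for $\ind_{G}^{S_{n}} \rho$ as a character of $\Gal(E/\Q)$; by invariance under induction (Remark \ref{rmk:invariance-of-p-adic-Stark}) the conjecture also holds for $\rho$ as a character of $G = \Gal(E/F)$. Theorem \ref{thm:descent-result} (iii) then gives $\ETNC_{p}(E/F, 1)$, and Proposition \ref{prop:LTC-vs-ETNC-at-0-and-1} (iv) together with Remark \ref{rmk:Leo-real-vs-complex} yields $\LTC_{p}(E/F, 1)$.

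To pass from $r = 1$ to $r = 0$ at each such $p$, note that $E/F$ is unramified at $p$ by the construction, so every completion at $p$ is tamely ramified and Theorem \ref{thm:known-cases-of-ECC} (i) gives $T\Omega^{\loc}(E/F, 1)_{p} = 0$; identity \eqref{eq:TOmega-at-0-and-1} then yields $\LTC_{p}(E/F, 0)$. Taken together, the previous steps establish $\LTC^{\tors}(E/F, r)$ and $\LTC_{\odd}(E/F, r)$ for $r \in \{0, 1\}$. Finally, if $|G|$ is odd, then $K_{0}(\Z_{2}[G], \Q_{2})_{\tors}$ is trivial by \cite[Lemma 3.10]{MR3461042}, so $\LTC^{\tors}(E/F, r)$ also implies $\LTC_{2}(E/F, r)$, giving full $\LTC(E/F, r)$. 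The principal technical ingredient beyond the machinery already developed in the paper is the existence of infinitely many totally real $S_{n}$-Galois extensions of $\Q$ with prescribed unramified primes; this is the step where classical realisation techniques must be invoked carefully so as to make Theorem \ref{thm:known-cases-of-ECC} applicable at every odd prime divisor of $|G|$.
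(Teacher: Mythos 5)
Your proof is correct and uses the same fundamental ingredients as the paper: Cayley's embedding $G \hookrightarrow S_{n}$, the identity $\Perm(S_{n}) = R_{\C}(S_{n})$, existence of infinitely many totally real tamely ramified $S_{n}$-realisations over $\Q$, and the epsilon-constant machinery to pass between $s=0$ and $s=1$. The organisation, however, differs from the paper's in two respects.

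First, the paper proves $\LTC^{\tors}(E/\Q,r)$ and $\LTC_{p}(E/\Q,1)$ at the top level via Theorem \ref{thm:LTC-1-from-Leo-for-virtual-perm-chars} (and then $\LTC_{p}(E/\Q,0)$ via tameness), and only then applies restriction functoriality to descend all of these to $E/F$, invoking Corollary \ref{cor:LTC-0-1-relations}~(ii) to handle primes $p \nmid |G|$. You instead restrict only the torsion statement, and then establish $\LTC_{p}(E/F,1)$ for odd $p \mid |G|$ by re-arguing the $p$-adic Stark conjecture directly for $G$: inducing $\rho$ up to $S_{n}$, noting $\ind_{G}^{S_{n}}\rho$ is a virtual permutation character, applying Proposition \ref{prop:leo-implies-p-adic-Stark-for-virtual-perm-chars}, and descending via Remark \ref{rmk:invariance-of-p-adic-Stark}. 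This is precisely the argument underlying Theorem \ref{thm:LTC-1-from-Leo-for-virtual-perm-chars} and Corollary \ref{cor:leo-implies-p-adic-Stark-for-virtual-perm-chars}, unwound on the inside; both routes commute and are logically equivalent, but the paper's is a bit shorter because it reuses the theorem as a black box.

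Second, you assert that Hilbert irreducibility with a ``standard refinement'' produces infinitely many totally real $S_{n}$-extensions of $\Q$ that are \emph{unramified} at any prescribed finite set of primes. This is plausible but stronger than necessary; tame ramification at those primes is all Theorem \ref{thm:known-cases-of-ECC}~(i) requires, and the paper's appeal to \cite[Proposition 2]{MR1901356} for totally real tame $S_{n}$-realisations is the cleaner and more directly justified citation. Also, the equality ``$R_{\C_{p}}^{+}(S_{n}) = \Perm(S_{n})$'' is not literally correct ($R^{+}$ is a set of actual characters, $\Perm$ a ring of virtual ones); what you mean, and what is true, is $R_{\C_{p}}^{+}(S_{n}) \subset R_{\C_{p}}(S_{n}) = \Perm(S_{n})$. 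These are cosmetic points; the underlying argument is sound.
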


\begin{proof}
By Cayley's theorem, there exists a positive integer $n$ such that $G$ embeds into $S_{n}$, the symmetric group of degree $n$.
Moreover, $\Perm(S_{n})=R_{\C}(S_{n})$ and there are infinitely many 
(totally) real tamely ramified Galois extensions $E/\Q$ with $\Gal(E/\Q) \cong S_{n}$ (see \cite[Proposition 2]{MR1901356}, for example). 
Fix such an extension $E/\Q$.
By Theorem \ref{thm:LTC-1-from-Leo-for-virtual-perm-chars}, $\LTC^{\tors}(E/\Q,0)$ and $\LTC^{\tors}(E/\Q,1)$ both hold and 
$\LTC_{p}(E/\Q,1)$ holds for all odd prime divisors $p$ of $|G|$; moreover, since $E/\Q$ is tamely ramified,
$\LTC_{p}(E/\Q,0)$ also holds for all such primes by Theorem \ref{thm:known-cases-of-ECC} (i) and \eqref{eq:TOmega-at-0-and-1}.
By construction, there is a sub-extension $E/F$ with $\Gal(E/F) \cong G$.
The functoriality properties of the LTCs with respect to restriction together with Corollary \ref{cor:LTC-0-1-relations} (ii)
show that for $r \in \{ 0, 1\}$ both $\LTC^{\tors}(E/F,r)$ and $\LTC_{\odd}(E/F,r)$ hold.
If $|G|$ is odd, then $\LTC_{2}(E/F,r)$ also holds by Corollary \ref{cor:LTC-0-1-relations} (ii).
\end{proof}

\subsection{The group of affine transformations}\label{subsec:affine}

Let $q$ be a prime power and let $\F_{q}$ be the finite field with $q$ elements.
The group $\Aff(q)$ of affine transformations on $\F_{q}$ is the group of transformations
of the form $x \mapsto ax +b$ with $a \in \F_{q}^{\times}$ and $b \in \F_{q}$.
Thus $\Aff(q)$ is isomorphic to the semidirect product $\F_{q} \rtimes \F_{q}^{\times}$ with the natural action.
Note that in particular $\Aff(3) \cong S_{3}$ and $\Aff(4) \cong A_{4}$.

\begin{theorem}\label{thm:p-adic-Stark-for-group-of-affine-transformations}
Let $E/F$ be a Galois extension of totally real number fields such that $G:=\Gal(E/F) \cong \Aff(q)$ for some prime power $q$.
Let $N$ be the commutator subgroup of $G$ and suppose that $E^{N}/\Q$ is abelian (in particular, this is the case when $F=\Q$.) 
Let $p$ be a prime and suppose that $\Leo(E,p)$ holds.
Then the $p$-adic Stark conjecture at $s=1$ holds for every $\rho \in R_{\C_{p}}(G)$.
\end{theorem}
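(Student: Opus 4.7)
The plan is to combine Theorem~\ref{thm:p-adic-Stark-for-absolutely-abelian-characters} with Proposition~\ref{prop:leo-implies-p-adic-Stark-for-virtual-perm-chars}. By Remark~\ref{rmk:p-adic-Stark-closed-under-Z-linear-combs} it suffices to prove the conjecture for every $\rho \in \Irr_{\C_{p}}(G)$. The case $q = 2$ is immediate, since then $G \cong C_{2}$ is abelian, $N$ is trivial, and the hypothesis forces $E/\Q$ to be abelian, so every character of $G$ is absolutely abelian and Theorem~\ref{thm:p-adic-Stark-for-absolutely-abelian-characters} applies. Henceforth I assume $q \geq 3$ and write $G \cong \Aff(q) = N \rtimes H$ with $N = \F_{q}$ and $H \cong \F_{q}^{\times}$; since $H$ acts freely and transitively on $N \setminus \{0\}$ (and hence on the nontrivial characters of $N$), Clifford theory shows that $\Irr_{\C_{p}}(G)$ consists of the $q-1$ linear characters inflated from $G/N \cong H$, together with a single nonlinear irreducible $\chi$ of dimension $q-1$, namely $\chi = \ind_{N}^{G} \psi$ for any nontrivial $\psi \in \Irr_{\C_{p}}(N)$.

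For every linear character $\lambda$ of $G$, the fact that $N$ is the commutator subgroup of $G$ means that $\lambda$ factors through $G/N \cong \Gal(E^{N}/F)$. Since $E^{N}/\Q$ is abelian by hypothesis, $\lambda$ is then absolutely abelian, and Theorem~\ref{thm:p-adic-Stark-for-absolutely-abelian-characters} gives the $p$-adic Stark conjecture at $s=1$ for $\lambda$.

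For the nonlinear character $\chi$, the key step is to realise $\chi$ as a virtual permutation character. The natural action of $G \cong \Aff(q)$ on $\F_{q} \cong G/H$ is sharply $2$-transitive, so the associated permutation character $\ind_{H}^{G} \mathbbm{1}_{H}$ is the sum of the trivial character and a unique irreducible of dimension $q-1$, which must be $\chi$; thus
\[
	\chi \;=\; \ind_{H}^{G} \mathbbm{1}_{H} - \mathbbm{1}_{G} \;\in\; \Perm(G).
\]
Since $\Leo(E,p)$ holds by assumption, Proposition~\ref{prop:leo-implies-p-adic-Stark-for-virtual-perm-chars} then yields the $p$-adic Stark conjecture at $s=1$ for $\chi$. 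Combining the linear and nonlinear cases via Remark~\ref{rmk:p-adic-Stark-closed-under-Z-linear-combs} completes the proof.

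There is no substantive obstacle beyond the (elementary) observation that $\chi$ lies in $\Perm(G)$; this is what allows the two previously established ingredients to be combined. The absolutely abelian theorem handles the linear characters (whose values lie in $\Q(\zeta_{q-1})$ and need not be rational), while the virtual permutation input (which needs $\Leo(E,p)$) handles the single rational-valued irreducible $\chi$.
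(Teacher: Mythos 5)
Your proof is correct and follows essentially the same route as the paper's. Both proofs dispose of the linear characters via Theorem~\ref{thm:p-adic-Stark-for-absolutely-abelian-characters} (using that they factor through $G/N$, since $N=[G,G]$, and $E^N/\Q$ is abelian), and both express the unique nonlinear irreducible $\tau = \ind_N^G\psi$ in terms of $\ind_H^G \mathbbm{1}_H$ and then invoke Leopoldt for $E$ to handle the permutation part. The only difference is cosmetic: the paper computes $\langle\tau,\ind_H^G\mathbbm{1}_H\rangle_G=1$ via Frobenius reciprocity and Mackey and concludes $\tau$ is a $\Z$-linear combination of $\ind_H^G\mathbbm{1}_H$ and linear characters, then handles $\ind_H^G\mathbbm{1}_H$ via Corollary~\ref{cor:p-Stark-for-induced-chars}; you instead observe directly from $2$-transitivity that $\tau = \ind_H^G\mathbbm{1}_H - \mathbbm{1}_G \in \Perm(G)$ and apply Proposition~\ref{prop:leo-implies-p-adic-Stark-for-virtual-perm-chars}. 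These two computations of the same identity $\ind_H^G\mathbbm{1}_H = \mathbbm{1}_G + \tau$ are interchangeable, and the two downstream references (Corollary~\ref{cor:p-Stark-for-induced-chars} vs.\ Proposition~\ref{prop:leo-implies-p-adic-Stark-for-virtual-perm-chars}) rest on the same underlying facts, so there is no substantive divergence. (The separate treatment of $q=2$ is harmless but unnecessary.)
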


\begin{proof}
The result for linear characters in $R_{\C_{p}}(G)$ follows from Theorem \ref{thm:p-adic-Stark-for-absolutely-abelian-characters}.
We now identify $G$ with $\Aff(q)$ and note that $G$ is a Frobenius group with Frobenius kernel $N=\{ x \mapsto x+b \mid b \in \F_{q} \}$ and
Frobenius complement $H = \{ x \mapsto ax \mid a \in \F_{q}^{\times }\}$ (see \cite[\S 14A]{MR632548} for background on Frobenius groups).
Let $\psi \in \Irr_{\C_{p}}(N)$ with $\psi \neq  \mathbbm{1}_{N}$. 
Then the induced character $\tau := \ind_{N}^{G} \psi$ is of degree $|G/N|=q-1$ and \cite[(14.4)]{MR632548} shows that $\tau \in \Irr_{\C_{p}}(G)$.
Since there are $q-1$ linear characters in $\Irr_{\C_{p}}(G)$ and $(q-1)+(q-1)^{2}=q(q-1)=|G|$,
we conclude that $\tau$ is in fact the unique non-linear character in $\Irr_{\C_{p}}(G)$.
By Frobenius reciprocity (\cite[(10.9)]{MR632548}), Mackey's subgroup theorem (\cite[(10.13)]{MR632548}),
and the fact that $N$ is abelian, we have
\[
\langle \tau, \ind_{H}^{G} \mathbbm{1}_{H} \rangle_{G} = \langle \psi, \res^{G}_{N} \ind_{H}^{G} \mathbbm{1}_{H} \rangle_{N} = \langle \psi, \ind_{\{ e \}}^{N} 
 \mathbbm{1}_{\{ e \}} \rangle_{N} = 1,
\]
where $\{ e \}$ denotes the trivial subgroup of $G$.
Hence $\tau$ can be expressed as $\Z$-linear combination of $\ind_{H}^{G} \mathbbm{1}_{H}$ and linear characters in $\Irr_{\C_{p}}(G)$. 
We have already shown that the $p$-adic Stark conjecture at $s=1$ holds for all linear characters in $\Irr_{\C_{p}}(G)$;
by Corollary \ref{cor:p-Stark-for-induced-chars}, it also holds for $\ind_{H}^{G} \mathbbm{1}_{H}$ since $\Leo(E^{H},p)$ holds.
Thus by Remark \ref{rmk:p-adic-Stark-closed-under-Z-linear-combs} the $p$-adic Stark conjecture at $s=1$ holds for $\tau$ and therefore for all
$\rho \in R_{\C_{p}}(G)$.
\end{proof}

\begin{corollary}\label{cor:LTC-at-s=1-for-group-of-affine-transformations}
Let $E/\Q$ be a totally real Galois extension such that $G := \Gal(E/\Q) \cong \Aff(p^{m})$ for some odd prime $p$ and some positive integer $m$.
If $\Leo(E,p)$ holds then $\LTC(E/\Q,1)$ holds; if we further suppose that 
$p$ satisfies any of the conditions of Theorem \ref{thm:known-cases-of-ECC}, then 
$\LTC(E/\Q,0)$ also holds.
\end{corollary}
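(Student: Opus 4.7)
The plan is to establish $\LTC(E/\Q, 1)$ by verifying the vanishing of its $\ell$-part in $K_{0}(\Z_{\ell}[G], \Q_{\ell})$ for every prime $\ell$, then deduce $\LTC(E/\Q, 0)$ via the identity \eqref{eq:TOmega-at-0-and-1}. Writing $N$ for the commutator subgroup of $G \cong \Aff(p^{m})$, a direct computation gives $N = \F_{p^{m}}$ (the Sylow $p$-subgroup) with $G/N \cong \F_{p^{m}}^{\times}$ abelian. Since $F=\Q$, the fixed field $E^{N}/\Q$ is Galois with abelian Galois group, hence absolutely abelian. Therefore Theorem \ref{thm:p-adic-Stark-for-group-of-affine-transformations} applies, and $\Leo(E,p)$ yields the $p$-adic Stark conjecture at $s=1$ for every $\rho \in R_{\C_{p}}(G)$.

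I would then handle the $p$-part. By Ferrero--Washington $\mu_{p}(E^{N}) = 0$ (as $E^{N}/\Q$ is abelian), and since $E/E^{N}$ is Galois of $p$-power degree $|N| = p^{m}$, Remark \ref{rmk:vanishing-of-mu-in-Galois-p-extensions} upgrades this to $\mu_{p}(E) = 0$. Thus Theorem \ref{thm:descent-result}(iii) yields $\ETNC_{p}(E/\Q, 1)$. In parallel, Corollary \ref{cor:p-stark-implies-stark} converts the $p$-adic Stark conjecture into Stark's conjecture at $s=1$ for every $\chi \in R_{\C}^{+}(G)$, so $\Rat(E/\Q)$ holds by Proposition \ref{prop-def:LTC-vs-ETNC-rationality}. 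Applying Proposition \ref{prop:LTC-vs-ETNC-at-0-and-1}(iv), with the hypothesis (\textasteriskcentered) supplied by Remark \ref{rmk:Leo-real-vs-complex}, converts $\ETNC_{p}(E/\Q, 1)$ into $\LTC_{p}(E/\Q, 1)$.

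For primes $\ell \neq p$, I would invoke the authors' previous work \cite{MR3461042}, which proves the ETNC for the pairs $(h^{0}(\mathrm{Spec}(E))(r), \Z[\tfrac{1}{p}][G])$ for $r \in \{0, 1\}$. In particular $\ETNC_{\ell}(E/\Q, 0)$ holds for every $\ell \neq p$, which under $\Rat(E/\Q)$ is equivalent to $\LTC_{\ell}(E/\Q, 0)$ by Proposition \ref{prop:LTC-vs-ETNC-at-0-and-1}(ii); Corollary \ref{cor:LTC-0-1-relations}(i) then gives $\LTC_{\ell}^{\tors}(E/\Q, 1)$, and for $\ell \nmid |G|$ Corollary \ref{cor:LTC-0-1-relations}(ii) upgrades this to $\LTC_{\ell}(E/\Q, 1)$. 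For the remaining primes $\ell \mid p^{m} - 1$, the Sylow $\ell$-subgroup of $G$ is cyclic (being contained in $\F_{p^{m}}^{\times}$), and I would combine the functoriality reduction of \S\ref{subsec:ETNC-reduction-steps} with the hybrid $p$-adic group-ring techniques of \cite{MR3461042} to obtain $\LTC_{\ell}(E/\Q, 1)$. Assembling all primes yields $\LTC(E/\Q, 1)$.

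The second assertion is obtained via \eqref{eq:TOmega-at-0-and-1}. The hypothesis on $p$ combined with Theorem \ref{thm:known-cases-of-ECC} yields $T\Omega^{\mathrm{loc}}(E/\Q, 1)_{p} = 0$, while for $\ell \neq p$ the element $T\Omega^{\mathrm{loc}}(E/\Q, 1)_{\ell}$ is torsion (trivially zero for $\ell \nmid |G|$, and vanishing for the remaining $\ell$ by the techniques used to establish $\LTC_{\ell}(E/\Q, 1)$ above). Summing and using $\LTC(E/\Q, 1)$ gives $\LTC(E/\Q, 0)$. The main obstacle is the conversion of $\ETNC_{\ell}$ into $\LTC_{\ell}$ at $s=1$ for $\ell \mid p^{m} - 1$ with $\ell \neq p$, since the standard route via Proposition \ref{prop:LTC-vs-ETNC-at-0-and-1}(iv) would require Leopoldt at $\ell$ for some totally complex Galois extension of $\Q$ containing $E$, which is not available; bypassing this via the hybrid techniques of \cite{MR3461042}, which exploit the cyclicity of the Sylow $\ell$-subgroup, is the crux of the argument.
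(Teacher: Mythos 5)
Your treatment of the $p$-part is exactly the paper's argument: Theorem \ref{thm:p-adic-Stark-for-group-of-affine-transformations} gives the $p$-adic Stark conjecture, $\mu_{p}(E)=0$ via Remark \ref{rmk:vanishing-of-mu-in-Galois-p-extensions}, then Theorem \ref{thm:descent-result}(iii) yields $\ETNC_{p}(E/\Q,1)$, and Proposition \ref{prop:LTC-vs-ETNC-at-0-and-1}(iv) with Remark \ref{rmk:Leo-real-vs-complex} converts this to $\LTC_{p}(E/\Q,1)$. That part is correct and identical in structure.

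The gap is in your treatment of the primes $\ell \neq p$ at $s=1$, and you correctly identify it as the crux but do not actually close it. You propose combining the Brauer-induction/Sylow reduction of \S\ref{subsec:ETNC-reduction-steps} with ``hybrid techniques,'' emphasising the cyclicity of the Sylow $\ell$-subgroup. This is not what makes the step work. The paper invokes \cite[Theorem 5.12]{MR3461042} together with the fact that $\Z_{\ell}[G]$ is $N$-hybrid (in the sense of \cite[Definition 2.5]{MR3461042}, see \cite[Example 2.16]{MR3461042}), where $N$ is the \emph{commutator subgroup} of $G$, namely the Sylow $p$-subgroup of order $p^{m}$. The hybrid structure (available because $\ell \nmid |N|$ and $G/N$ is abelian) injects the torsion of $K_{0}(\Z_{\ell}[G],\Q_{\ell})$ into that of $K_{0}(\Z_{\ell}[G/N],\Q_{\ell})$, and $\LTC_{\ell}$ is already known for the abelian quotient $E^{N}/\Q$ by Theorem \ref{thm:known-cases-of-LTCs}(i); that is the reduction, not the Sylow--$\ell$ cyclicity route of \S\ref{subsec:ETNC-reduction-steps}, which would in any case leave you to prove $\LTC_{\ell}$ at $s=1$ for various cyclic sub-extensions with nothing yet in hand. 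Note also that the paper performs this entire $\ell\neq p$ step \emph{without any hypotheses}: one starts from $\ETNC^{\tors}(E/\Q,0)$ and $\ETNC_{\ell}(E/\Q,0)$ in \cite[Theorem 4.6]{MR3461042}, passes via Proposition \ref{prop:LTC-vs-ETNC-at-0-and-1}(ii) and Corollary \ref{cor:LTC-0-1-relations}(i), and then applies the hybrid theorem.

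Finally, for the passage from $s=1$ to $s=0$ you overshoot: you try to show $T\Omega^{\loc}(E/\Q,1)_{\ell}=0$ for all $\ell$, which is a separate (and stronger) statement than what is needed and which your argument does not deliver for $\ell\neq p$ dividing $|G|$. It is cleaner and suffices to observe that $\LTC_{\ell}(E/\Q,0)$ for $\ell\neq p$ has already been established above directly from \cite{MR3461042}; the only remaining prime is $p$, for which Theorem \ref{thm:known-cases-of-ECC} gives $T\Omega^{\loc}(E/\Q,1)_{p}=0$, so the $p$-part of \eqref{eq:TOmega-at-0-and-1} converts $\LTC_{p}(E/\Q,1)$ into $\LTC_{p}(E/\Q,0)$.
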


\begin{proof}
Initially, we do not assume that $E$ is totally real, that $p$ is odd or that $\Leo(E,p)$ holds.
Let $\ell \neq p$ be a prime.
We observe that $\ETNC^{\tors}(E/\Q,0)$ and $\ETNC_{\ell}(E/\Q,0)$
both hold by \cite[Theorem 4.6]{MR3461042}. 
Then Proposition \ref{prop:LTC-vs-ETNC-at-0-and-1} (ii) implies that
$\LTC^{\tors}(E/\Q,0)$ and $\LTC_{\ell}(E/\Q,0)$ hold as well.
The validity of $\LTC^{\tors}(E/\Q,1)$ follows from Corollary \ref{cor:LTC-0-1-relations} (i).
Let $N$ be the commutator subgroup of $G$.
As the group ring $\Z_{\ell}[G]$ is $N$-hybrid in the sense of \cite[Definition 2.5]{MR3461042}
by \cite[Example 2.16]{MR3461042}, we conclude that $\LTC_{\ell}(E/\Q,1)$ holds
by \cite[Theorem 5.12]{MR3461042}.

Now assume that $p$ is odd, $E$ is totally real and that $\Leo(E,p)$ holds. 
By Theorem \ref{thm:p-adic-Stark-for-group-of-affine-transformations}
the $p$-adic Stark conjecture at $s=1$ holds for every $\rho \in R_{\C_{p}}(G)$.
Moreover, $\mu_{p}(E)=0$ by Remark \ref{rmk:vanishing-of-mu-in-Galois-p-extensions}.
Thus $\ETNC_{p}(E/\Q,1)$ holds by Theorem \ref{thm:descent-result} (iii).
Hence Proposition \ref{prop:LTC-vs-ETNC-at-0-and-1} (iv) and Remark \ref{rmk:Leo-real-vs-complex} together show that 
$\LTC_{p}(E/\Q,1)$ holds. The final claim now follows from Theorem \ref{thm:known-cases-of-ECC}.
\end{proof}

\begin{remark}\label{rmk:on-Affq-corollary}
The first paragraph of the proof of Corollary \ref{cor:LTC-at-s=1-for-group-of-affine-transformations} shows unconditionally that 
for any Galois extension $E/\Q$ with $\Gal(E/\Q) \cong \Aff(p^{m})$ for some prime $p$ and some positive integer $m$,
$\LTC^{\tors}(E/\Q,1)$ holds and $\LTC_{\ell}(E/\Q,1)$ holds for all primes $\ell \neq p$.
Moreover, Corollary \ref{cor:LTC-at-s=1-for-group-of-affine-transformations} can be generalised to the case of extensions $E/F$ as described in the statement of
Theorem \ref{thm:p-adic-Stark-for-group-of-affine-transformations}, subject to the further hypothesis that $E/\Q$ is Galois.
\end{remark}

\subsection{Further specific Galois extensions}\label{subsec:specific-Galois-extensions}

\begin{theorem}\label{thm:rational-frob-inversion-or-D12-extensions-of-Q}
Let $m$ be a positive integer and let $G = (C_{3})^{m} \rtimes C_{2}$ where $C_{2}$ acts on $(C_{3})^{m}$ by inversion
(in the case $m=1$ we have $G \cong S_{3}$) or let $G=D_{12}$ (the dihedral group of order $12$).
Let $E/\Q$ be a totally real Galois extension with $\Gal(E/\Q) \cong G$.
If $\Leo(E,3)$ holds then both $\LTC(E/\Q,0)$ and $\LTC(E/\Q,1)$ hold.
\end{theorem}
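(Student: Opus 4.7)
The plan is to treat the two possibilities for $G$ separately, in each case proceeding prime-by-prime and exploiting that $G$ has a normal Sylow $3$-subgroup $N$ (namely $(C_{3})^{m}$ or $C_{3}$) whose quotient $G/N$ is abelian (namely $C_{2}$ or $V_{4}$), and that $\Perm(G) = R_{\C}(G)$: for the first family this is the observation in the proof of Corollary~\ref{cor:rational-frob-inversion}, and for $D_{12} \cong S_{3} \times C_{2}$ it follows from Remark~\ref{rmk:all-chars-perm-chars} by closure under direct products. In both cases $E^{N}/\Q$ is abelian, so $\mu_{3}(E) = 0$ by Ferrero--Washington combined with Remark~\ref{rmk:vanishing-of-mu-in-Galois-p-extensions}, and $\Leo(E,3)$ is assumed throughout.

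For $G = (C_{3})^{m} \rtimes C_{2}$ the plan is simply to invoke Corollary~\ref{cor:rational-frob-inversion} with $F = \Q$: the condition that $3$ splits completely in $F/\Q$ is vacuous, so the corollary delivers both $\LTC(E/\Q,0)$ and $\LTC(E/\Q,1)$ at once (this relies on Theorem~\ref{thm:GEGG-for-C3m:C2-extensions} for the $s=0$ statement and the triviality of $K_{0}(\Z_{2}[G],\Q_{2})_{\tors}$ from \cite[Lemma 3.10]{MR3461042} for the $2$-part).

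For $G = D_{12}$ the first step is to apply Theorem~\ref{thm:LTC-1-from-Leo-for-virtual-perm-chars} with $p = 3$, which yields $\LTC^{\tors}(E/\Q,r)$ for $r \in \{0,1\}$ together with $\LTC_{3}(E/\Q,1)$. Since $[E:\Q] = 12 \leq 15$, any decomposition group above $3$ has order at most $15$, so hypothesis~(iii) of Theorem~\ref{thm:known-cases-of-ECC} is satisfied at $p = 3$; part~(ii) of Theorem~\ref{thm:LTC-1-from-Leo-for-virtual-perm-chars} then also supplies $\LTC_{3}(E/\Q,0)$. For a prime $p \nmid 12$ the validity of $\LTC_{p}(E/\Q,r)$ will be immediate from $\LTC^{\tors}(E/\Q,r)$ combined with Corollary~\ref{cor:LTC-0-1-relations}~(ii).

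The main obstacle is the $2$-part in the $D_{12}$ case, which the plan handles by following the template of Example~\ref{ex:S4-V4-ETNC-at-s=1}. Since $\gcd(|C_{3}|, 2) = 1$ and $C_{3}$ is normal in $D_{12}$ with abelian quotient $V_{4}$, one expects $\Z_{2}[D_{12}]$ to be $C_{3}$-hybrid in the sense of \cite[Definition 2.5]{MR3461042}, a verification that should be entirely analogous to \cite[Examples 2.16, 2.18]{MR3461042}; granting this, \cite[Proposition 3.8]{MR3461042} renders the quotient map $K_{0}(\Z_{2}[D_{12}],\Q_{2})_{\tors} \to K_{0}(\Z_{2}[V_{4}],\Q_{2})_{\tors}$ injective. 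Since $E^{C_{3}}/\Q$ is abelian with Galois group $V_{4}$, Theorem~\ref{thm:known-cases-of-LTCs}~(i) gives $\LTC(E^{C_{3}}/\Q,r)$, and functoriality of the LTC with respect to the quotient $D_{12} \twoheadrightarrow V_{4}$ sends $T\Omega(E/\Q,r)_{2}$ to $T\Omega(E^{C_{3}}/\Q,r)_{2} = 0$. Combined with $T\Omega(E/\Q,r)_{2} \in K_{0}(\Z_{2}[D_{12}],\Q_{2})_{\tors}$ from the $\LTC^{\tors}$ statement above, this injectivity will force $\LTC_{2}(E/\Q,r)$, completing the $D_{12}$ case.
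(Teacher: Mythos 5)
Your overall structure matches the paper's proof closely: the $(C_3)^m\rtimes C_2$ case by invoking Corollary~\ref{cor:rational-frob-inversion} with $F=\Q$, and the $D_{12}$ case via Theorem~\ref{thm:LTC-1-from-Leo-for-virtual-perm-chars} and a hybrid-ring argument at $p=2$. Your treatment of $\mu_3(E)=0$ and $\Perm(G)=R_\C(G)$ is correct, and your handling of the $p\nmid 12$ primes via Corollary~\ref{cor:LTC-0-1-relations}~(ii) is fine. In fact you deal with the $2$-primary part at $s=0$ somewhat more cleanly than the paper: you push $T\Omega(E/\Q,r)_2$ through the quotient to $V_4$ for \emph{both} $r=0$ and $r=1$, rather than proving only $r=1$ at $p=2$ and then trying to descend to $r=0$ via an epsilon constant argument.

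However, there is a genuine gap in your treatment of the $2$-adic group ring. You assert that $\Z_2[D_{12}]$ ``should be'' $C_3$-hybrid in the sense of \cite[Definition 2.5]{MR3461042}, by analogy with \cite[Examples 2.16, 2.18]{MR3461042}. This verification would actually fail: writing $D_{12}\cong S_3\times C_2$ and $N=C_3$, the complement $\Z_2[D_{12}](1-e_N)$ is isomorphic to $M_2(\Z_2[C_2])$, which is not a maximal $\Z_2$-order in $M_2(\Q_2[C_2])\cong M_2(\Q_2)^2$ because $\Z_2[C_2]$ is not maximal; so $\Z_2[D_{12}]$ is \emph{not} $C_3$-hybrid, and \cite[Proposition 3.8]{MR3461042} does not apply. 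What you actually need (and what the paper uses) is the strictly weaker statement that $\Z_2[D_{12}]$ is \emph{weakly} $N$-hybrid, i.e.\ precisely that the quotient map
\[
\quot^{D_{12}}_{D_{12}/N} : K_{0}(\Z_{2}[D_{12}],\Q_{2})_{\tors} \longrightarrow K_{0}(\Z_{2}[D_{12}/N],\Q_{2})_{\tors}
\]
is injective; this is a separate calculation established in \cite[Example 3.13]{MR3461042}, not a consequence of Examples 2.16 or 2.18. With this correction your argument goes through.
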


\begin{remark}\label{rmk:Leopoldt-compute}
For a particular extension $E/\Q$ satisfying the hypotheses of Theorem \ref{thm:rational-frob-inversion-or-D12-extensions-of-Q}
one can computationally verify $\Leo(E,3)$ as described in the Appendix. 
In this particular setting, this is a simpler way of verifying $\LTC(E/\Q,0)$ and $\LTC(E/\Q,1)$ than the algorithms
of Janssen \cite{janssen-thesis} and Debeerst \cite{debeerst-thesis}, respectively.
Similar remarks also apply to Corollary \ref{cor:LTC-at-s=1-for-group-of-affine-transformations}.
We note that reducing the problem to the verification of Leopoldt's conjecture for a single prime $p$ is crucial for this computational approach.
\end{remark}

\begin{proof}[Proof of Theorem \ref{thm:rational-frob-inversion-or-D12-extensions-of-Q}]
In each case, $E$ is a Galois $3$-extension of a quadratic or biquadratic extension of $\Q$ and so $\mu_{3}(E)=0$
by Remark \ref{rmk:vanishing-of-mu-in-Galois-p-extensions}. 
If $G = (C_{3})^{m} \rtimes C_{2}$ then
$\LTC(E/\Q,0)$ and $\LTC(E/\Q,1)$ both hold by Corollary \ref{cor:rational-frob-inversion}.
Now suppose that $G = D_{12}$.
Since $D_{12} \cong S_{3} \times C_{2}$ we see that $\Perm(D_{12})=R_{\C}(D_{12})$
and so $\LTC^{\tors}(E/\Q,0)$, $\LTC^{\tors}(E/\Q,1)$ and $\LTC_{3}(E/\Q,1)$ all hold by 
Theorem \ref{thm:LTC-1-from-Leo-for-virtual-perm-chars} (i).
Moreover, by \cite[Example 3.13]{MR3461042} the group ring $\Z_{2}[D_{12}]$ is weakly $N$-hybrid
where $N$ is the Sylow $3$-subgroup of $D_{12}$, meaning that the map
\begin{equation}\label{ex:D12-weak-hynrid-quotient-map}
\quot^{D_{12}}_{D_{12}/N} : K_{0}(\Z_{2}[D_{12}],\Q_{2})_{\tors} \longrightarrow K_{0}(\Z_{2}[D_{12}/N],\Q_{2})_{\tors} 
\end{equation}
is injective. 
Thus $\LTC_{2}(E/\Q,1)$ holds by Theorem \ref{thm:known-cases-of-LTCs} (i) and 
the functoriality properties of the LTCs with respect to quotients.
Therefore $\LTC(E/\Q,1)$ holds by Corollary \ref{cor:LTC-0-1-relations} (ii) and 
so $\LTC(E/\Q,0)$ also holds by Theorem \ref{thm:known-cases-of-ECC} (iii).
\end{proof}

\begin{corollary}\label{cor:S4-S4xC2-extensions-of-Q}
Let $L/\Q$ be a Galois extension with $\Gal(L/\Q) \cong S_{4}$ or $S_{4} \times C_{2}$.
If $E:=L^{V_{4}}$ is totally real and $\Leo(E,3)$ holds then
$\LTC_{\odd}(L/\Q,0)$ and $\LTC_{\odd}(L/\Q,1)$ both hold.
\end{corollary}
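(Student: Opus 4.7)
The plan is to reduce the corollary to Theorem \ref{thm:rational-frob-inversion-or-D12-extensions-of-Q} applied to the intermediate field $E$ and then propagate the resulting vanishing up to $L$ by attacking each odd prime separately, using the functoriality of the LTCs under the quotient map by $V_4$.

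First I would note that $V_4$ is characteristic in $S_4$ (being the unique non-cyclic subgroup of order $4$) and hence is normal in $G := \Gal(L/\Q)$ in both cases; moreover $G/V_4 \cong S_3$ when $G \cong S_4$, and $G/V_4 \cong S_3 \times C_2 \cong D_{12}$ when $G \cong S_4 \times C_2$. Thus $E$ is a totally real Galois extension of $\Q$ with Galois group $S_3$ or $D_{12}$, and Theorem \ref{thm:rational-frob-inversion-or-D12-extensions-of-Q} under the hypothesis $\Leo(E,3)$ yields $\LTC(E/\Q, 0)$ and $\LTC(E/\Q, 1)$; in particular $T\Omega(E/\Q,r)_{3} = 0$ for $r \in \{0,1\}$. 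Meanwhile, since $\Perm(S_{4}) = R_{\C}(S_{4})$ and this property is closed under direct products (Remark \ref{rmk:all-chars-perm-chars}), we also have $\Perm(G) = R_{\C}(G)$, so $\Char_{\Q}(G) = R_{\C}(G)$ and Theorem \ref{thm:known-cases-of-LTCs} (ii) gives $\LTC^{\tors}(L/\Q, r)$. Hence the $p$-parts $T\Omega(L/\Q, r)_{p} \in K_{0}(\Z_{p}[G], \Q_{p})_{\tors}$ are well-defined for every prime $p$.

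For any odd prime $p \neq 3$ one has $p \nmid |G|$, since $|G| \in \{24, 48\}$; thus $\Z_{p}[G]$ is a maximal $\Z_{p}$-order and $K_{0}(\Z_{p}[G], \Q_{p})_{\tors}$ is trivial (compare the proof of Corollary \ref{cor:LTC-0-1-relations} (ii)), so $T\Omega(L/\Q, r)_{p} = 0$ automatically. The only remaining case is $p = 3$. By functoriality of $T\Omega(-, r)$ under the quotient $G \twoheadrightarrow G/V_{4}$, which identifies $G/V_{4}$ with $\Gal(E/\Q)$, we get
\[
\quot^{G}_{G/V_{4}}\bigl(T\Omega(L/\Q, r)_{3}\bigr) = T\Omega(E/\Q, r)_{3} = 0,
\]
so it suffices to show that
\[
\quot^{G}_{G/V_{4}} : K_{0}(\Z_{3}[G], \Q_{3})_{\tors} \longrightarrow K_{0}(\Z_{3}[G/V_{4}], \Q_{3})_{\tors}
\]
is injective. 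For $G \cong S_{4}$ this is precisely the content of Example \ref{ex:S4-V4-ETNC-at-s=1}, which rests on the $V_{4}$-hybrid structure of $\Z_{3}[S_{4}]$ together with \cite[Proposition 3.8]{MR3461042}. For $G \cong S_{4} \times C_{2}$, since $3 \nmid |C_{2}|$ the primitive idempotents of $\Z_{3}[C_{2}]$ induce a ring decomposition $\Z_{3}[G] \cong \Z_{3}[S_{4}] \times \Z_{3}[S_{4}]$ compatible with the analogous decomposition $\Z_{3}[G/V_{4}] \cong \Z_{3}[S_{3}] \times \Z_{3}[S_{3}]$; under these, $\quot^{G}_{G/V_{4}}$ splits as the direct sum of two copies of $\quot^{S_{4}}_{S_{3}}$, and injectivity on torsion reduces to the $S_{4}$ case.

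The main obstacle I anticipate is this last step: although the idempotent decomposition is transparent, one must carefully check that the $V_{4}$-hybrid injectivity result of Example \ref{ex:S4-V4-ETNC-at-s=1} genuinely transports across the $C_{2}$ factor, and in particular that both the relative $K$-group and the quotient map decompose consistently with the ring product. Once that compatibility is in place, the remaining arguments are routine functoriality manipulations.
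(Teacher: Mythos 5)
Your proof is correct and essentially follows the paper's own argument: both reduce from $L$ to $E = L^{V_4}$ via Theorem \ref{thm:known-cases-of-LTCs} (ii) (rationality/torsion), Theorem \ref{thm:rational-frob-inversion-or-D12-extensions-of-Q} (vanishing for the quotient), the $V_4$-hybrid injectivity of $\mathrm{quot}^{G}_{G/V_4}$ at $p=3$, and then dispose of the remaining odd primes via the triviality of $K_0(\Z_p[G],\Q_p)_{\tors}$ when $p \nmid |G|$. The only point of divergence is how the $V_4$-hybrid structure of $\Z_3[S_4 \times C_2]$ is obtained: the paper cites \cite[Lemma 2.9]{MR3461042}, which transports the $V_4$-hybrid property of $\Z_3[S_4]$ to $\Z_3[S_4 \times C_2]$, whereas you rederive it via the idempotent splitting $\Z_3[C_2] \cong \Z_3 \times \Z_3$. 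Your anticipated obstacle there is not a genuine one: since relative $K$-groups and the map $\mathrm{quot}^{G}_{G/V_4}$ both decompose along a product of rings, and the decomposition of $\Z_3[S_4 \times C_2]$ and $\Z_3[S_3 \times C_2]$ is induced by the same central idempotents of $\Z_3[C_2]$, the quotient map genuinely does split as two copies of $\mathrm{quot}^{S_4}_{S_3}$, and injectivity on torsion follows from the $S_4$ case as you assert. So your decomposition route is a valid, more hands-on substitute for the citation.
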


\begin{proof}
Let $G=S_{4}$ or $S_{4} \times C_{2}$ and let $r \in \{0,1\}$. 
Since $\Perm(G)=R_{\C}(G)$, we have that $\LTC^{\tors}(L/\Q,r)$ holds
by Theorem \ref{thm:known-cases-of-LTCs} (ii).
Since $G/V_{4}$ is isomorphic to either $S_{3}$ or $S_{3} \times C_{2} \cong D_{12}$,
Theorem \ref{thm:rational-frob-inversion-or-D12-extensions-of-Q} shows that $\LTC(E/\Q,r)$ holds. 
Moreover, the group ring $\Z_{3}[S_{4}]$ is `$V_{4}$-hybrid' by \cite[Example 2.18]{MR3461042}
and so $\Z_{3}[S_{4} \times C_{2}]$ is also $V_{4}$-hybrid by \cite[Lemma 2.9]{MR3461042}.
Hence
\[
\quot^{G}_{G/V_{4}} : K_{0}(\Z_{3}[G],\Q_{3})_{\tors} \longrightarrow K_{0}(\Z_{3}[G/V_{4}],\Q_{3})_{\tors} 
\]
is injective by \cite[Proposition 3.8]{MR3461042}.
Thus by the functoriality properties of the LTCs with respect to quotient maps, $\LTC_{3}(L/\Q,r)$ also holds.
Therefore $\LTC_{\odd}(L/\Q,r)$ holds by Corollary \ref{cor:LTC-0-1-relations}.
\end{proof}

\begin{remark}
It is possible to have $L$ totally complex and $E$ totally real in Corollary \ref{cor:S4-S4xC2-extensions-of-Q}
(consider the Galois closure of $\mathtt{x^{4}+x+2}$).
Moreover, it is interesting to compare Theorem \ref{thm:rational-frob-inversion-or-D12-extensions-of-Q} and Corollary \ref{cor:S4-S4xC2-extensions-of-Q} to 
\cite[Theorem 4.18]{MR3461042} and to Example \ref{ex:S4-V4-ETNC-at-s=1}.
\end{remark}

\newpage

\appendix

\section{Computational verification of Leopoldt's conjecture and the Leading Term Conjectures for totally real fields\\
by Tommy Hofmann, Henri Johnston and Andreas Nickel}\label{appendix}

\subsection{An algorithm for verifying Leopoldt's conjecture}\label{subsec:Leo-alg}
For a comprehensive discussion of Leopoldt's conjecture, we refer the reader to \cite[Chapter X, \S 3]{MR2392026}.
In \cite{MR904010},
Buchmann and Sands described an algorithm to verify Leopoldt's conjecture $\Leo(K,p)$ for a given number field $K$ and prime $p$.
Unfortunately, it appears that there is no currently available implementation of this algorithm.
Here we describe a more direct approach that verifies $\Leo(E,p)$ for a given totally real finite Galois extension $E/\Q$ and prime $p$
by taking advantage of the features of a modern computer algebra system.

Fix a prime $p$, let $\iota \colon \C \cong \C_{p}$ be any field isomorphism and let $\log_{p}: \C_{p}^{\times} \rightarrow \C_{p}$
denote the $p$-adic logarithm. Let $E/\Q$ be a totally real finite Galois extension.
Let $n = [E : \Q]$, let $\sigma_{1},\dotsc,\sigma_{n-1}$ be distinct embeddings of $E$ into $\R$ and let $\varepsilon_{1},\dotsc,\varepsilon_{n-1}$
be a system of fundamental units in $\mathcal{O}_{E}^{\times}$. 
Then the $p$-adic regulator of $E$ is
\[
R_{E,p}:= \det(\log_{p} \iota \circ \sigma_{i}(\varepsilon_{j}))_{1 \leq i, j \leq {n-1}}
\]
and this is well-defined up to sign. Moreover, $\Leo(E,p)$ holds if and only if $R_{E,p} \neq 0$.

Now suppose that $\eta_{1},\dotsc, \eta_{n-1}$ are independent units in $\mathcal{O}_{E}^{\times}$.
Then there exists a matrix $A = (a_{ij})_{1 \leq i, j\leq n-1} \in \operatorname{GL}_{n-1}(\Q)$ with 
$\eta_{i} = \prod_{1 \leq j \leq n-1}\varepsilon_{j}^{a_{ij}}$ for $1 \leq i \leq n-1$ and we have
\[
\det(\log_{p} \iota \circ \sigma_{i}(\eta_{j}))_{1 \leq i, j \leq {n-1}} = \det(A) R_{F,p}.
\]
Therefore  $\Leo(E, p)$ holds if and only if
\begin{align}\label{eq:leocond}
\det(\log_{p} \iota \circ \sigma_{i}(\eta_{j}))_{1 \leq i, j \leq {n-1}} \neq 0.
\end{align}
Let $w$ be a place of $E$ above $p$ and let $E_{w}$ denote the completion of $E$ at $w$.
Abusing notation, we henceforth let $\log_{p} : E_{w}^{\times} \rightarrow E_{w}$ denote the restriction of $\log_{p} : \C_{p}^{\times} \rightarrow \C_{p}$.
Then~(\ref{eq:leocond}) and thus $\Leo(E,p)$ are equivalent to
\begin{align}\label{eq:leocond2}
d := \det(\log_{p}{} \circ \tilde \sigma_{i}(\eta_{j}))_{1 \leq i, j \leq {n-1}} \neq 0,
\end{align}
where $\tilde \sigma_{1},\dotsc,\tilde \sigma_{n} \in \Gal(E/\Q)$.
Let $v_{w}$ denote the normalised $w$-adic valuation on $E_{w}$.
Using the power series expansion of $\log_{p}$, for every $\alpha \in E^{\times}$ and $N \geq 1$ we can find $\beta \in E$
such that $v_{w}(\log_{p}(\alpha) - \beta) \geq N$ (also see \cite[\S 3.1]{MR3506905}).

To verify $\Leo(E, p)$, we can now proceed as follows.
\begin{enumerate}
\item Compute the field automorphisms $\tilde \sigma_{1}, \ldots, \tilde \sigma_{n}$ in $\Gal(E/\Q)$.
\item Compute a set $\eta_{1},\dotsc,\eta_{n-1} \in \mathcal{O}_{E}^{\times}$ of independent units.
\item Let $N = 1$.
\item Compute $R_{N} = (\beta_{ij})_{1 \leq i,j \leq n -1} \in M_{n-1}(E)$ such that
        $v_{w}(\log_{p}(\sigma_{i}(\eta_{j})) - \beta_{ij}) \geq N$ for all $1 \leq i,j\leq n -1$.
\item Compute $d_{N} = \det(R_{N}) \in E$. If $v_{w}(d_{N}) < N$ then $\Leo(E, p)$ holds and we stop. Otherwise
        replace $N$ by $N + 1$ and go to step (iv).
\end{enumerate}
It remains to show that the procedure terminates if and only if $\Leo(E, p)$ holds.
Let $d$ be defined as in \eqref{eq:leocond2}.
We have
\[
v_{w}(d) \geq \min\{ v_{w}(d_{N}), v_{w}(d-d_{N}) \},
\]
with equality if and only if $ v_{w}(d_{N}) \neq v_{w}(d-d_{N})$.
Moreover, by construction we have $v_{w}(d - d_{N}) \geq N$ for all $N \in \Z_{\geq 1}$.
Thus $v_{w}(d_{N}) < N$ if and only if $v_{w}(d) < N$. 
Therefore there exists $N \in \Z_{\geq 1}$ such that $v_{w}(d_{N}) < N$ if and only if $d \neq 0$, which in turn is equivalent to 
$\Leo(E,p)$.

\begin{remark}
One way of finding independent units is to follow the classical class group algorithm of
Buchmann~\cite{MR1104698}, but skip all verification steps. We give a brief description of this method.
After choosing a non-empty set $S$ of non-zero prime ideals of $\mathcal{O}_{E}$, we consider $\varphi \colon
\Z^{\lvert S \rvert} \to \Cl_E,\ (e_\mathfrak p)_{\mathfrak p \in S} \mapsto
[ \prod_{\mathfrak p \in S} \mathfrak p^{e_\mathfrak p}]$. Next we choose $l > \lvert S \rvert$ and randomly compute
$\alpha_1,\dotsc,\alpha_l \in E^\times$ and $(v_i)_{1 \leq i \leq l} = ((e_{i, \mathfrak p})_\mathfrak p)_{1
\leq i \leq l}$ such that $\alpha_i \mathcal O_E = \prod_{\mathfrak p \in S}\mathfrak p^{e_{i,\mathfrak p}}$ for $1 \leq i 
\leq l$. We then consider $\psi \colon \Z^l \to \Z^{\lvert S \rvert}, \ (\mu_i)_i \mapsto \sum_{i}\mu_i v_i$ and note
that any kernel element $(w_i)_{1 \leq i \leq l} \in \ker(\psi)$ yields a unit $\prod_j \alpha_j^{w_j} \in \mathcal{O}_{E}^\times$. 
This is repeated until $1,\dotsc,n-1$ independent units are found.
\end{remark}

This algorithm has been implemented in \textsc{Magma} \cite{MR1484478} by the first-named author of the Appendix and the code is available on his website. 

\subsection{Enumeration of number fields and computational results}

\begin{theorem}\label{thm:leo-holds-at-3}
Let $E/\Q$ be a totally real finite Galois extension. If either
\begin{enumerate}
\item $\Gal(E/\Q) \cong S_{3}$ and $\Disc(\mathcal{O}_{E}) < 10^{20}$, or
\item $\Gal(E/\Q) \cong D_{12}$ and $\Disc(\mathcal{O}_{E}) < 10^{30}$,
\end{enumerate}
then $\Leo(E, 3)$ holds.
\end{theorem}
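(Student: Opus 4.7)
The plan is to verify the theorem by a large-scale computation, applying the algorithm of Section \ref{subsec:Leo-alg} to every field in the two families. First I would enumerate all totally real Galois extensions $E/\Q$ with $\Gal(E/\Q) \cong S_{3}$ and $\Disc(\mathcal{O}_{E}) < 10^{20}$. Each such $E$ is the Galois closure of a non-Galois totally real cubic field $K$, which is uniquely determined by $E$, and one has $\Disc(E) = \Disc(K)^{2} \cdot \Disc(F)$ where $F$ is the unique quadratic subfield of $E$. Enumeration therefore reduces to a standard listing of totally real cubic fields subject to the induced discriminant bound, which can be done either directly or by querying an existing tabulation such as the LMFDB. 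For the $D_{12}$ case I would use the isomorphism $D_{12} \cong S_{3} \times C_{2}$ to write each such $E$ as a compositum $E' \cdot M$ of a totally real $S_{3}$-extension $E'$ and a real quadratic field $M$ linearly disjoint from $E'$, and then enumerate the pairs $(E', M)$ with the corresponding bounds on $\Disc(E')$ and $\Disc(M)$.

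For each field $E$ in the resulting list I would then run the algorithm of Section \ref{subsec:Leo-alg}. The key per-field steps are: realising the Galois action as a set of field automorphisms, computing a system of $[E:\Q]-1$ independent units in $\mathcal{O}_{E}^{\times}$ (for which any relaxed variant of Buchmann's class group algorithm suffices, since fundamental units are \emph{not} required), and iteratively refining $3$-adic approximations $R_{N}$ to the matrix $(\log_{p}\tilde{\sigma}_{i}(\eta_{j}))_{i,j}$ while testing whether $v_{w}(\det R_{N}) < N$. Once this inequality is observed, $\Leo(E,3)$ is certified by the correctness analysis of Section \ref{subsec:Leo-alg}; conversely, termination of the algorithm on $E$ is equivalent to $\Leo(E,3)$, so successful termination on every field in both families yields the theorem.

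The hard part will be purely one of scale rather than mathematical substance: the $S_{3}$ case involves $492\,335$ sextic fields, and the $D_{12}$ case involves $24\,283$ fields of degree $12$ with discriminant up to $10^{30}$. The dominant cost per field is the computation of independent units; the $3$-adic arithmetic is expected to contribute only modestly in practice, since heuristically the $3$-adic valuation of the regulator should be small in the overwhelming majority of cases, so that very low $p$-adic precision already suffices. In practice the whole run is handled by the \textsc{Magma} implementation of Section \ref{subsec:Leo-alg}, distributed across many CPU cores; no individual field should present a genuine theoretical obstacle, so the only real risk is that a pathological instance drives the required precision $N$ beyond what can be tolerated in runtime, in which case a sharper arithmetic implementation would be needed for that single field.
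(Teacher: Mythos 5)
Your per-field verification is exactly the algorithm of Appendix~\ref{subsec:Leo-alg}, and your treatment of the $D_{12}$ case (via $D_{12}\cong S_3\times C_2$, writing $E$ as a compositum of a totally real $S_3$-sextic and a real quadratic) is the same as the paper's. The genuine difference is in how you propose to enumerate the $S_3$-sextics. You go through the non-Galois totally real cubic subfield $K$ and the identity $\Disc(E)=\Disc(K)^2\Disc(F)$ with $F=\Q(\sqrt{\Disc(K)})$, reducing to a listing of cubic fields. The paper instead builds these fields as $C_3$-extensions of real quadratic fields via class field theory, and — crucially — cross-checks the resulting count of $492\,335$ against the \emph{unconditional} analytic count of Cohen and Thorne, since the intermediate class/ray-class group computations are only proved complete under GRH. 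Your route is mathematically valid and arguably more elementary, but you do not address this completeness question: if you ``query the LMFDB'' you inherit whatever provenance those tables carry, and you have not bounded the range over which cubic fields must be enumerated (from $\Disc(E)=\Disc(K)^2\Disc(F)<10^{20}$ and $\Disc(F)\geq 5$ one only gets $\Disc(K)\lesssim 4.5\times 10^{9}$, which yields on the order of $3\times 10^{8}$ cubic candidates, of which fewer than half a million survive the joint discriminant constraint). So to turn your sketch into a proof you either need an unconditional direct enumeration of cubics over a very large range (feasible via Belabas/Davenport--Heilbronn but costly), or an independent completeness certificate playing the role Cohen--Thorne's count plays in the paper. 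This is the one substantive ingredient in the actual argument that your proposal omits; everything else is the same.
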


\begin{proof}
For a given totally real field $E$ satisfying (i) or (ii), we verified $\Leo(E, 3)$ using the implementation of the algorithm described in \S \ref{subsec:Leo-alg}.
It thus remains to describe how complete tables of the totally real fields satisfying (i) and (ii) were computed.

(i) Building on work of Belabas~\cite{MR1415795}, Cohen and Thorne \cite{MR3215550} used analytic methods to
determine explicit counts of totally real $S_{3}$-extensions $E/\Q$ satisfying $\Disc(\mathcal{O}_{E})< X$ for various values of
$X$, including the count of 492\,335 for $X = 10^{20}$.
This method is unconditional, but does not give defining equations for the $S_{3}$-extensions.
To find these equations, we used the class field theoretic algorithms from~\cite{FHS2018} to construct $S_{3}$-extensions of $\Q$ as $C_{3}$-extensions of real quadratic fields.
Although some of the subcomputations (class and ray class group computations, for example) rely on the generalised Riemann hypotheses,
by checking with the unconditional count of $492\,335$, we obtain an unconditional complete
list of totally real $S_{3}$-extensions $E$ with $\Disc(\mathcal{O}_{E}) < 10^{20}$.

(ii) To compute the table of totally real $D_{12}$-extensions of $\Q$, we exploited the isomorphism $D_{12} \cong C_{2} \times S_{3}$.
Using the transitivity of discriminants in towers of number fields, we see that any totally real $D_{12}$-extension $E/\Q$ with $\Disc(\mathcal{O}_{E}) < X$
is the compositum of a real quadratic field $K$ with $\Disc(\mathcal{O}_{K}) < X^{1/6}$ and a totally real
$S_{3}$-extension $L/\Q$ with $\Disc(\mathcal{O}_{L}) < X^{1/2}$.
Since for $X = 10^{30}$ we have $X^{1/2} = 10^{15} < 10^{20}$, the complete table of totally real $S_{3}$-fields from part~(i) is sufficient to
obtain the complete table of totally real $D_{12}$-extensions $E$ with $\Disc(\mathcal{O}_{E}) < 10^{30}$ unconditionally.
There are $24\,283$ such fields.

In both cases the computation of the fields was carried out using \textsc{Hecke}~\cite{MR3703682}.
\end{proof}

\begin{corollary}
For a totally real Galois extension $E/\Q$ satisfying either \emph{(i)} or \emph{(ii)} in Theorem \ref{thm:leo-holds-at-3} 
both $\LTC(E/\Q,0)$ and $\LTC(E/\Q,1)$ hold. 
\end{corollary}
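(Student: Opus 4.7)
The proof is essentially an immediate combination of the two theorems immediately preceding the corollary, and so my plan would be as follows.

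First, I would observe that for any totally real Galois extension $E/\Q$ satisfying hypothesis (i) or (ii) of Theorem \ref{thm:leo-holds-at-3}, the extension $E/\Q$ falls into one of the two cases of Theorem \ref{thm:rational-frob-inversion-or-D12-extensions-of-Q}: in case (i) we have $\Gal(E/\Q) \cong S_{3}$, which is precisely the $m=1$ instance of the family $(C_{3})^{m} \rtimes C_{2}$ with $C_{2}$ acting by inversion; in case (ii) we have $\Gal(E/\Q) \cong D_{12}$, which is the other case explicitly addressed by Theorem \ref{thm:rational-frob-inversion-or-D12-extensions-of-Q}.

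Next, by Theorem \ref{thm:leo-holds-at-3}, the hypothesis on the discriminant bound ensures that $\Leo(E,3)$ holds. This verifies the only hypothesis (beyond the isomorphism type of the Galois group and total reality of $E$) required to apply Theorem \ref{thm:rational-frob-inversion-or-D12-extensions-of-Q}. Applying that theorem yields both $\LTC(E/\Q,0)$ and $\LTC(E/\Q,1)$, which is the desired conclusion.

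There is no real obstacle here: the entire content has been packaged into the two preceding theorems, and the corollary merely records the fact that the computational verification of $\Leo(E,3)$ (Theorem \ref{thm:leo-holds-at-3}) unlocks the conditional result (Theorem \ref{thm:rational-frob-inversion-or-D12-extensions-of-Q}) on precisely the families of extensions for which the former was carried out. The substantive mathematical work lies entirely in those two theorems and in the algorithm of \S\ref{subsec:Leo-alg}; the corollary itself requires no further argument.
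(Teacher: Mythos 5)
Your proposal is correct and is essentially identical to the paper's proof, which consists of the single sentence ``This is the combination of Theorems \ref{thm:rational-frob-inversion-or-D12-extensions-of-Q} and \ref{thm:leo-holds-at-3}.'' You have simply unpacked that combination, correctly identifying that $S_{3}$ is the $m=1$ case of $(C_{3})^{m}\rtimes C_{2}$ and that Theorem \ref{thm:leo-holds-at-3} supplies the hypothesis $\Leo(E,3)$ needed to invoke Theorem \ref{thm:rational-frob-inversion-or-D12-extensions-of-Q}.
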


\begin{proof}
This is the combination of Theorems \ref{thm:rational-frob-inversion-or-D12-extensions-of-Q} and \ref{thm:leo-holds-at-3}.
\end{proof}

\subsection*{Acknowledgements}
The first named author acknowledges financial support provided by the DFG 
within Project II.2 of SFB-TRR 195  `Symbolic Tools in Mathematics and their Applications'. 
The financial acknowledgements of the other two authors are as for the main article.
The authors are grateful to Jonathan Sands for useful correspondence regarding \cite{MR904010}, and to MathOverflow user `znt' and 
David Loeffler for the initial sketch of the idea for the algorithm to verify Leopoldt's conjecture. 

\bibliography{p-adic-Stark-bib}{}
\bibliographystyle{amsalpha}

\end{document}